\newtheorem{theorem}{Theorem}[section]
\newtheorem{proposition}[theorem]{Proposition}
\newtheorem{definition}[theorem]{Definition}
\newtheorem{remark}[theorem]{Remark}
\newtheorem{example}[theorem]{Example}
\newtheorem{lemma}[theorem]{Lemma}
\newtheorem{cor}[theorem]{Corollary}
\newtheorem{pro}[theorem]{Proposition}
\numberwithin{equation}{section}
\begin{document}
\title{
Tempered reductive homogeneous spaces
}

\author{Yves Benoist and Toshiyuki Kobayashi
}
\date{}
\maketitle






\centerline{ 
}
\begin{abstract}{
Let $G$ be a semisimple algebraic Lie group and $H$ a reductive subgroup.
We find geometrically the best even integer $p$ for which the  representation  of $G$
in $L^2(G/H)$ is almost $L^{p}$. As an application,
we give a criterion which detects whether this  representation is tempered.
}\end{abstract}

\maketitle

\noindent
\textit{Key words and phrases,} Lie groups, homogeneous spaces, tempered 
representations, matrix coefficients, symmetric spaces

\medskip
\noindent
\textit{MSC (2010):}\enspace
Primary 22E46; 
Secondary 43A85, 
22F30. 

\section{Introduction}
\label{sec:intro}


Let $G$ be an algebraic semisimple Lie group and 
$H$  a reductive subgroup.
The natural unitary representation of $G$ in $L^2(G/H)$ 
has been studied over years since 
 the pioneering work of I.~M.~Gelfand and Harish-Chandra. 

Thanks to many mathematicians including
E.~van den Ban, 
P.~Delorme, M.~Flensted-Jensen, S.~Helgason, T.~Matsuki, T.~Oshima, H.~Schlichtkrull, J. Sekiguchi, among others,
many properties of this representation are known when 
$G/H$ is a symmetric space, i.e.~when $H$ 
is the set of fixed points of an involution of $G$.
Most of the preceeding works in this case are built on the fact that the ring $\mathbb D(G/H)$ of $G$-invariant differential operators is commutative, and that the disintegration of $L^2(G/H)$ (Plancherel formula) is essentially the expansion of $L^2$-functions into joint eigenfunctions of $\mathbb D(G/H)$.

This paper deals with a more general reductive subgroup $H$,
for which we cannot expect that the ring $\mathbb D(G/H)$ is commutative,
and a complete change of the machinery would be required in the study of $L^2(G/H)$.
We address the following question: 
\textsl{What kind of unitary representations occur
in the disintegration of $G/H$?} More precisely, 
\textsl{when are all of them tempered?}

The aim of this paper is to give an easy-to-check
necessary and sufficient condition  
on $G/H$   under which all these irreducible unitary representations are 
tempered, or equivalently under which $L^2(G/H)$ is tempered,
 and in particular, has a \lq uniform spectral gap\rq.
We note that irreducible tempered representations
 were completely classified more than 30 years ago by
 Knapp and Zuckerman in \cite{KnZu82},
whereas non-tempered ones are still
mysterious and have not been completely understood.
Our criterion singles out homogeneous spaces $G/H$ for which irreducible
 non-tempered
 unitary representations occur in the disintegration of $L^2(G/H)$.
More generally, 
we give, for any even integer $p$, 
a necessary and sufficient condition 
under which  $L^2(G/H)$ is almost $L^{p}$
(see Theorem \ref{thm:Xtemp}).

Our criterion is new even when $G/H$ is a reductive symmetric space where the disintegration of $L^2(G/H)$  was established up to the classification of discrete series representations for (sub)symmetric spaces 
(\cite{BS, De, O}). 
Indeed irreducible unitary representations that contribute to $L^2(G/H)$
in the direct integral are obtained as a parabolic induction from
discrete series for subsymmetric spaces, but a subtle point arises from discrete series  with singular parameter. 
In fact, all possible discrete series were captured in \cite{OM},
 however,
  the non-vanishing conditions of these modules
 are sometimes combinatorially complicated
and
 these very modules with singular parameters
would affect the worst decay of
 matrix coefficients if they do not vanish.
(Such complication does not occur in the case of group manifolds
 because Harish-Chandra's discrete series do not allow singular parameters.)
Algebraically, the underlying $(\mathfrak g, K)$-modules are certain
Zuckerman derived functor modules $A_\mathfrak q(\lambda)$
(see \cite{Knapp-Vogan} for general theory) 
with possibly singular $\lambda$  crossing many walls of the Weyl chambers,
so that the Langlands parameter may behave in an unstable way and 
even the modules themselves may disappear.
A necessary condition for the non-vanishing of discrete series
for reductive symmetric spaces with singular
parameter was proved in \cite{Ma} that corrected
the announcement in \cite{OM}, whereas
a number of general methods to verify the 
non-vanishing of $A_{\mathfrak q}(\lambda)$-modules
 have been developed more recently in \cite[Chapters 4, 5]{Kupq}, \cite{T} 
for some classical groups,
but the proof of the sufficiency of the non-vanishing condition in \cite{Ma}
 has not been given so far.

Beyond symmetric spaces, 
very little has been known 
on the unitary representation of $G$ in $L^2(G/H)$ (cf.~\cite{Kdisc}).

\vskip 1pc

Here is an outline of the paper.
As a baby case, we  first study the 
unitary representations of a semisimple group in $L^2(V)$ where
$V$ is a finite dimensional representation.
We give a necessary and sufficient condition on $V$   
under which the representation  in $L^2(V)$ is tempered
(Theorem \ref{thm:Vtemp}), or more generally,
under which this representation  is almost $L^p$.
The heart of the paper is Chapter 4 where we give a proof of the main results
(Theorem \ref{thm:Xtemp})  for reductive homogeneous spaces $G/H$. 
In a subsequent paper we see that this criterion suffices to
give a  complete classification of 
 the pairs $(G, H)$ of algebraic reductive groups
for which the unitary representation of $G$ on $L^2(G/H)$ is non-tempered.
To give a flavor of what is possible, we collect a few applications of this criterion in Chapter 5, omitting the details of the computational verification. 

\section{Preliminary results}
\label{sec:tempered}

We collect in this chapter a few well-known facts on almost $L^p$ represen\-ta\-tions, on tempered representation 
and on uniform decay of matrix coefficients. 

\subsection{Almost $L^p$ representations}
\label{subsec:wlpr}

In this paper all Lie groups will be real Lie groups.
Let $G$ be a unimodular Lie group and $\pi$ be
a unitary representation  of $G$ in a Hilbert space ${\mathcal H}_\pi $.

\begin{definition}\label{def:wlp}
Let $p\geq 2$. The unitary representation $\pi$ is said to be almost $L^p$
if there exists  a dense subset $D\subset{\mathcal H}_\pi $ 
for which the coefficients\\
 $c_{v_1,v_2} \colon g\mapsto \langle  \pi(g)v_1,v_2\rangle $ are in
$L^{p+\varepsilon}(G)$ for all $\varepsilon>0$ and all $v_1$, $v_2$ in $D$. 
\end{definition}

Let $K$ be a maximal 
compact subgroup of $G$.

\begin{lemma}
A unitary representation $\pi$ is almost $L^p$
if and only if there exists  a dense subset $D_0\subset{\mathcal H}_\pi $ of 
$K$-finite vectors
for which the coefficients
 $c_{v_1,v_2}$ are in
$L^{p+\varepsilon}(G)$ for all $\varepsilon>0$ and all $v_1$, $v_2$ in $D_0$. 
\end{lemma}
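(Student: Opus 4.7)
\medskip

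\noindent\textbf{Proof plan.}
One direction is immediate: if a dense set $D_0$ of $K$-finite vectors has the required property, it already serves as the set $D$ of Definition~\ref{def:wlp}, so $\pi$ is almost $L^p$.

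For the converse, the plan is to start from a set $D$ as in Definition~\ref{def:wlp} and manufacture $D_0$ by applying the $K$-isotypic projectors. For each irreducible representation $\tau \in \widehat{K}$, let $E_\tau = d_\tau \int_K \overline{\chi_\tau(k)}\,\pi(k)\,dk$ be the associated self-adjoint projector onto the $\tau$-isotypic component $\mathcal{H}_\pi(\tau)$, whose range consists of $K$-finite vectors. I would then define
\[
D_0 \;=\; \operatorname{span}\{\, E_\tau v : v \in D,\; \tau \in \widehat{K}\,\}.
\]
Density of $D_0$ would follow from density of $D$ together with the orthogonal Peter--Weyl decomposition $\mathcal{H}_\pi = \widehat{\bigoplus}_{\tau \in \widehat{K}} \mathcal{H}_\pi(\tau)$: for any $v \in D$ the finite partial sums $\sum_{\tau \in F} E_\tau v$ converge to $v$, and each summand is a $K$-finite vector of $D_0$.

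The main point is to show that if $v_1,v_2 \in D$ then the coefficients $c_{E_\tau v_1, E_\sigma v_2}$ still lie in $L^{p+\varepsilon}(G)$. Using self-adjointness of $E_\sigma$ and the definition of $E_\tau$, I would expand
\[
c_{E_\tau v_1, E_\sigma v_2}(g) \;=\; d_\tau d_\sigma \int_K\!\int_K \overline{\chi_\tau(k_1)}\,\chi_\sigma(k_2)\, c_{v_1,v_2}(k_2^{-1} g k_1)\,dk_1\,dk_2,
\]
then apply Minkowski's integral inequality for the $L^{p+\varepsilon}(G)$-norm, using the crude bound $|\chi_\tau| \le d_\tau$ and the bi-invariance of Haar measure on $G$ (here unimodularity of $G$ together with $K$ being compact makes left and right translations by $K$ isometries of $L^{p+\varepsilon}(G)$) to conclude
\[
\big\|c_{E_\tau v_1, E_\sigma v_2}\big\|_{L^{p+\varepsilon}(G)} \;\le\; d_\tau^2 d_\sigma^2\, \big\|c_{v_1,v_2}\big\|_{L^{p+\varepsilon}(G)} \;<\; \infty.
\]
An arbitrary pair of elements of $D_0$ being a finite linear combination of such $E_\tau v_i$, bilinearity of $(w_1,w_2) \mapsto c_{w_1,w_2}$ then gives the required $L^{p+\varepsilon}$ bound for all matrix coefficients between vectors of $D_0$.

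The only delicate step is the norm estimate above; the rest is bookkeeping. Nothing deeper than Minkowski's integral inequality, bi-invariance of Haar measure, and the orthogonal $K$-isotypic decomposition is needed, so I expect no real obstacle beyond carefully writing down the double integral and verifying that the constants produced are finite for each fixed pair $(\tau,\sigma)$.
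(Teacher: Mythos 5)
Your proof is correct and is essentially the paper's argument: both start from a dense set $D$ as in Definition~\ref{def:wlp}, convolve against $K$-finite functions on $K$ to produce $K$-finite vectors $w_i$ (you use the specific choice of isotypic projectors $E_\tau$), observe that bi-invariance of Haar measure on a unimodular $G$ makes $g\mapsto c_{v_1,v_2}(k_2^{-1}gk_1)$ have the same $L^{p+\varepsilon}$-norm, and then apply Minkowski's integral inequality together with Peter--Weyl density. The only difference is cosmetic: the paper averages against arbitrary $K$-finite $f_i$ bounded by $1$, whereas you use the characters $d_\tau\overline{\chi_\tau}$ and carry the explicit $d_\tau^2 d_\sigma^2$ constant.
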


\begin{proof}
We first notice that for all $v_1$, $v_2$ in $D$ and all $k_1$, $k_2$ in $K$
the two vectors $\pi(k_1)v_1$ and $\pi(k_2)v_2$ have a coefficient with same $L^{p+\varepsilon}$-norm:
$$\| c_{\pi(k_1)v_1,\pi(k_2)v_2}\|_{L^{p+\varepsilon}}= \| c_{v_1,v_2}\|_{L^{p+\varepsilon}}.$$
Let $dk$ be the Haar probability measure on $K$.
For any two $K$-finite functions $f_1$ and $f_2$ on $K$,
bounded by $1$, the two vectors $w_1:=\int_Kf_1(k)\pi(k)v_1 \, dk$ and 
$w_2:=\int_Kf_2(k)\pi(k)v_2 \, dk$ have a coefficient with bounded $L^{p+\varepsilon}$-norm:
$$\| c_{w_1,w_2}\|_{L^{p+\varepsilon}}\leq \| c_{v_1,v_2}\|_{L^{p+\varepsilon}}.$$
These vectors $w_i$ live in a dense set $D_0$ of $K$-finite vectors of ${\mathcal H}_\pi $.
\end{proof}

\subsection{Tempered representations}
\label{subsec:temr}

The following definition is due to Harish-Chandra 
(See also \cite[Appendix F]{BeHaVa})

\begin{definition}
The unitary representation $\pi$ is said to be tempered
if $\pi$ is weakly contained 
in the regular representation $\lambda_G$ of $G$ in $L^2(G)$ i.e. if every coefficient of $\pi$ 
is a uniform limit on every compact of $G$  of a sequence of 
sums of coefficients of $\lambda_G$. 
\end{definition}

Here are a few basic facts on tempered representations.\\
- Let $G'\subset G$ be a finite index subgroup.
A unitary representation $\pi$ of  $G$ is tempered if and only if $\pi$
is tempered as a representation of $G'$.\\
- A unitary representation $\pi$ of a reductive group $G$ is tempered 
if and only if $\pi$ is tempered 
as a representation of the derived subgroup $[G,G]$.\\

\begin{proposition} \label{pro:temwl2}
{\bf (Cowling, Haagerup, Howe)}
Let $G$ be a semisimple connected  Lie 
group with finite center, and $m$ a positive integer. 

A unitary representation $\pi$ of $G$ is almost $L^2$
if and only if $\pi$ is tempered.  

More generally,  $\pi$  is almost $L^{2m}$
if and only if $\pi^{\otimes m}$ is tempered.  
\end{proposition}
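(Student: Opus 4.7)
The plan is first to reduce the ``more generally'' assertion to the basic equivalence ``almost $L^2$ iff tempered,'' and then to handle the two directions of that basic equivalence separately.

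For the reduction, fix vectors $v, w \in \mathcal{H}_\pi$ and let $v^{\otimes m}, w^{\otimes m} \in \mathcal{H}_{\pi^{\otimes m}}$ be the corresponding decomposable vectors. Since $c_{v^{\otimes m}, w^{\otimes m}}(g) = c_{v,w}(g)^m$, we have $c_{v,w} \in L^{2m+\varepsilon}(G)$ for every $\varepsilon > 0$ if and only if $c_{v^{\otimes m}, w^{\otimes m}} \in L^{2+\delta}(G)$ for every $\delta > 0$. The decomposable $K$-finite vectors span a dense subspace of $\mathcal{H}_{\pi^{\otimes m}}$, so Lemma 2.2 yields that $\pi$ is almost $L^{2m}$ if and only if $\pi^{\otimes m}$ is almost $L^2$; granting the basic equivalence, the latter is equivalent to $\pi^{\otimes m}$ being tempered, as required.

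\emph{Tempered implies almost $L^2$.} Invoke Harish-Chandra's fundamental estimate: for $K$-finite $\phi, \psi \in L^2(G)$ whose $K$-orbits span subspaces of dimension at most $d$, one has $|\langle \lambda_G(g)\phi, \psi\rangle| \leq d\,\|\phi\|\,\|\psi\|\,\Xi(g)$, where $\Xi$ is Harish-Chandra's spherical function; moreover $\Xi \in L^{2+\varepsilon}(G)$ for every $\varepsilon > 0$. If $\pi \prec \lambda_G$, a standard approximation argument---writing each $K$-finite coefficient of $\pi$ as a uniform-on-compacts limit of finite sums of $K$-finite coefficients of $\lambda_G$ of controlled $K$-type---transfers the $\Xi$-bound to $\pi$. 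Lemma 2.2 then concludes that $\pi$ is almost $L^2$.

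\emph{Almost $L^2$ implies tempered.} This is the main obstacle. The plan is to use the characterization $\pi \prec \lambda_G$ iff $\|\pi(f)\|_{\mathrm{op}} \leq \|\lambda_G(f)\|_{\mathrm{op}}$ for every $f \in C_c(G)$. For a $K$-finite unit vector $v$ with $c_{v,v} \in L^{2+\varepsilon}(G)$, one starts from
\[
\|\pi(f)v\|^2 \;=\; \langle \pi(f^* * f)v, v\rangle \;=\; \int_G (f^* * f)(g)\, c_{v,v}(g)\, dg,
\]
and bounds the right-hand side by $\|\lambda_G(f)\|_{\mathrm{op}}^2$ by combining H\"older's inequality with the Kunze--Stein convolution phenomenon $L^p(G) * L^2(G) \subset L^2(G)$ for $1 \leq p < 2$, which is available precisely because $G$ is semisimple with finite center. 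The technical heart of the argument is to secure this bound uniformly in $f$ and to pass to the limit $\varepsilon \to 0$, thereby obtaining $\|\pi(f)\|_{\mathrm{op}} \leq \|\lambda_G(f)\|_{\mathrm{op}}$ on a dense set of $K$-finite vectors and hence on all of $\mathcal{H}_\pi$.
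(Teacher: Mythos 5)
The paper offers no proof of Proposition~\ref{pro:temwl2}; it simply cites Theorems~1, 2 and the Corollary of \cite{CHH}. Your proposal is therefore a reconstruction of the Cowling--Haagerup--Howe arguments, and the overall architecture (use the domination of $K$-finite coefficients by $\Xi$ for one direction, the Kunze--Stein phenomenon for the other, and a tensor-power reduction for the $L^{2m}$ case) is the right one. Two remarks are in order, one on a genuine gap, one on where to lean more heavily on the cited machinery.

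The gap is in the tensor-power reduction. You correctly observe that $c_{v^{\otimes m},w^{\otimes m}}=c_{v,w}^m$, so $c_{v,w}\in L^{2m+\varepsilon}$ iff $c_{v^{\otimes m},w^{\otimes m}}\in L^{2+\varepsilon/m}$ \emph{for a fixed pair $(v,w)$}. But vectors of the form $v^{\otimes m}$ are far from spanning $\mathcal{H}_{\pi^{\otimes m}}$ densely, so two issues arise. (i) In the direction ``$\pi$ almost $L^{2m}$ $\Rightarrow$ $\pi^{\otimes m}$ almost $L^2$,'' you must produce a dense set of vectors in $\mathcal{H}_{\pi^{\otimes m}}$ with good coefficients. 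The natural choice is the span of general decomposable $K$-finite vectors $v_1\otimes\cdots\otimes v_m$, for which $c_{v_1\otimes\cdots\otimes v_m,\,w_1\otimes\cdots\otimes w_m}=\prod_i c_{v_i,w_i}$; this lands in $L^{2+\delta}$ by H\"older, not by the power identity, and you should say so. (ii) In the converse direction ``$\pi^{\otimes m}$ almost $L^2$ $\Rightarrow$ $\pi$ almost $L^{2m}$,'' the hypothesis only furnishes \emph{some} dense set of good vectors in $\mathcal{H}_{\pi^{\otimes m}}$, which a priori contains no decomposable vector $v^{\otimes m}$; your argument as written does not let you conclude that $c_{v,w}\in L^{2m+\varepsilon}$ for dense $v,w$. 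The standard fix is exactly the uniform estimate recorded as Proposition~\ref{pro:wlphc}: once $\pi^{\otimes m}$ is almost $L^2$, \emph{every} $K$-finite matrix coefficient of $\pi^{\otimes m}$ is dominated by a multiple of $\Xi$; applying this to $v^{\otimes m}$ gives $|c_{v,w}|^m\le C\,\Xi$, hence $|c_{v,w}|\le C^{1/m}\Xi^{1/m}\in L^{2m+\varepsilon}$, closing the gap. So the reduction should go through the $\Xi$-bound, not just through the abstract notion of ``almost $L^2$.''

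On the two directions of the basic equivalence, your sketch is on track but thin in exactly the places that make the CHH result nontrivial: the transfer of the $\Xi$-bound from $\lambda_G$ to an arbitrary weakly contained $\pi$ requires controlling $K$-types along the approximating sums, and the Kunze--Stein step needs the bound $\|\pi(f)\|\le\|\lambda_G(f)\|$ to be extracted uniformly before letting $\varepsilon\to0$. Since the paper's own ``proof'' is a citation, it would be cleaner for you to cite \cite[Theorems 1, 2 and Corollary]{CHH} for these two directions (as the paper does, and as you already implicitly do for Proposition~\ref{pro:wlphc}) and devote your exposition to making the tensor-power reduction airtight along the lines above.
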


See \cite[Theorems 1, 2 and Corollary]{CHH}.

\begin{remark}
When $G$ is amenable, according to Hulanicki--Reiter Theorem 
(see \cite[Theorem G.3.2]{BeHaVa}), every unitary representation of $G$ is tempered. 
However, when $G$ is non-compact, the trivial representation is not almost $L^2$.  
\end{remark}

The following remark was used implicitly in the introduction.

\begin{remark}
When a unitary representation $\pi$ of $G$ is a direct integral 
$\pi=\int^\oplus\pi_\lambda\,{\rm d}\mu(\lambda)$ of irreducible unitary representations $\pi_\lambda$, {\it the representation $\pi$ 
is tempered if and only if the representations $\pi_\lambda$ 
are tempered for $\mu$-almost every parameter $\lambda$}.
\end{remark}

\begin{proof}
Indeed, $\pi$ is weakly contained in the direct sum representation $\oplus_\lambda\pi_\lambda$, 
and conversely $\pi_\lambda$ is weakly contained in $\pi$  for $\mu$-almost every $\lambda$.

These statements follow for instance from the following fact 
in \cite[Theorem F.4.4]{BeHaVa} or \cite[Section 18]{Dix69}~:
For two unitary representations 
$\rho$ and $\rho'$ of $G$, one has the equivalence:\\
\centerline{\it $\rho$ is weakly contained in $\rho'$ $\Longleftrightarrow$
$\|\rho(f)\|\leq\|\rho'(f)\|$ for all $f$ in $L^1(G)$,}
where $\rho(f)=\int_Gf(g)\rho(g){\rm d} g$.
Note that this condition has only to be checked for a countable dense 
set of functions $f$ in $L^1(G)$, and that one has the equality 
$\|\pi(f)\|={\rm supess}_\lambda\|\pi_\lambda(f)\|$ (see 
\cite[Section II.2.3]{Dix57}).\end{proof}

\subsection{Uniform decay of coefficients}
\label{subsec:udc}

Let $G$ be a linear semisimple  connected  Lie group
and let $\Xi$ be the Harish-Chandra spherical function on $G$ 
(see \cite{CHH}).
A short definition for $\Xi$ is as the coefficient of the normalized $K$-invariant vector
of the spherical representation of the unitary principal series 
$\pi_o={\rm Ind}_P^G({\bf 1}_P)$ 
where $P$ is a minimal parabolic subgroup of $G$.
In this paper we will not need the precise formula for $\Xi$ 
but just the fact that
this function $\Xi$ is in $L^{2+\varepsilon}(G)$ for all $\varepsilon>0$
and the following proposition.


\begin{proposition}\label{pro:wlphc}
{\bf (Cowling, Haagerup, Howe )}
Let $p$ be an even integer.
A unitary representation $\pi$ of $G$  is almost $L^p$ if and only if,
for every $K$-finite vectors $v$, $w$ in ${\mathcal H}_\pi $, for every $g$ in $G$,
one has 
$$
|\langle  \pi(g)v,w\rangle |\leq \Xi(g)^{2/p} 
\| v\| \| w\| (\dim \langle  K v\rangle )^\frac12 (\dim \langle  K w\rangle )^\frac12.
$$
\end{proposition}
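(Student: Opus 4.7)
The plan is to prove the two implications separately, taking as the central ingredient the $p=2$ (tempered) case of the inequality, and deriving the general even $p$ case from it by a tensor-power argument via Proposition \ref{pro:temwl2}.

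For the ``if'' direction, suppose the pointwise bound holds. Since $p$ is even (in particular $p\ge 2$), for any $\varepsilon>0$ one can pick $\eta>0$ so small that $(p+\eta)\cdot\tfrac{2}{p}\le 2+\varepsilon$, so that $\Xi^{2/p}\in L^{p+\eta}(G)$ as a consequence of $\Xi\in L^{2+\varepsilon}(G)$. The hypothesis then shows that each coefficient $c_{v,w}$, for $v,w$ in the dense subspace $D_0$ of $K$-finite vectors, lies in $L^{p+\eta}(G)$, and the lemma of \S\ref{subsec:wlpr} gives that $\pi$ is almost $L^p$.

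For the ``only if'' direction, write $p=2m$ and assume $\pi$ is almost $L^{2m}$. By Proposition \ref{pro:temwl2}, the tensor power $\sigma:=\pi^{\otimes m}$ is tempered, i.e.\ almost $L^2$. I would then apply the base case $p=2$ of the present proposition to $\sigma$ with the $K$-finite vectors $V:=v^{\otimes m}$ and $W:=w^{\otimes m}$, whose coefficient on $G$ equals $\langle\pi(g)v,w\rangle^{m}$. Because $K$ acts diagonally on $\mathcal H_\pi^{\otimes m}$, the $K$-orbit of $v^{\otimes m}$ lies inside $\langle Kv\rangle^{\otimes m}$, so $\dim\langle KV\rangle\le(\dim\langle Kv\rangle)^m$ and similarly for $W$. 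Substituting, using $\|V\|=\|v\|^m$, and extracting an $m$-th root yields exactly the claimed bound with exponent $2/p=1/m$ on $\Xi(g)$.

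The principal obstacle is thus the base tempered case: the sharp pointwise estimate
$$
|\langle\sigma(g)V,W\rangle|\le\Xi(g)\,\|V\|\,\|W\|\,(\dim\langle KV\rangle)^{1/2}(\dim\langle KW\rangle)^{1/2}
$$
for an arbitrary tempered unitary $\sigma$ and $K$-finite vectors $V,W$. This is the heart of \cite{CHH}; its proof exploits that $\Xi$ is the normalized $K$-spherical coefficient of the spherical principal series $\pi_o$, the Dixmier characterization of weak containment through operator norms on $L^1(G)$, and a convolution/averaging argument comparing $\langle\sigma(\cdot)V,W\rangle$ against $\langle\pi_o(\cdot)\mathbf 1,\mathbf 1\rangle$, with the factors $(\dim\langle KV\rangle)^{1/2}$ and $(\dim\langle KW\rangle)^{1/2}$ entering when one replaces $V,W$ by their $K$-averages.
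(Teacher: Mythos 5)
The paper does not give a proof of this proposition at all---it simply cites \cite[Corollary p.108]{CHH}. Your argument is correct and in fact reproduces the structure of the CHH proof: the ``if'' direction is a straightforward integrability check (the bound $\Xi\in L^{2+\varepsilon}(G)$ implies $\Xi^{2/p}\in L^{p+\eta}(G)$, and the lemma of \S\ref{subsec:wlpr} finishes), while the ``only if'' direction is the tensor-power reduction $\pi\leadsto\pi^{\otimes m}$ via Proposition \ref{pro:temwl2}, with the elementary bookkeeping $\|v^{\otimes m}\|=\|v\|^m$, $\langle\pi(g)v,w\rangle^m=\langle\sigma(g)V,W\rangle$, and $\dim\langle K\,v^{\otimes m}\rangle\le(\dim\langle Kv\rangle)^m$. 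That reduction is exactly how CHH derive their even-$p$ corollary from the tempered case, so you are not taking a different route---you are unpacking the route. The one genuine omission, which you correctly flag, is the $p=2$ base case itself (CHH Theorem 2 plus the averaging/convolution estimate yielding the $(\dim\langle Kv\rangle)^{1/2}$ factors); since the paper defers this to \cite{CHH} as well, your proof and the paper's are at the same level of completeness, with your write-up having the advantage of making the reduction to the base case explicit.
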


See \cite[Corollary p.108]{CHH}

This proposition tells us that once an almost $L^p$-norm condition is checked for 
the coefficients of a dense 
set of vectors of ${\mathcal H}_\pi $, 
one gets a {\sc uniform} estimate for the coefficients of {\sc all} the $K$-finite
vectors of ${\mathcal H}_\pi $. 

In this proposition, the assumption that the real number $p\geq 2$ is an even integer
can probably be dropped. If this is the case, the same assumption can also be dropped
in our Theorems \ref{thm:Vtemp} and \ref{thm:Xtemp}. 
\vspace{1em}

The set of $p$ for which $\pi$ is almost $L^p$ is an interval
$[p_\pi,\infty[$ with $p_\pi\geq 2$ or $p_\pi=\infty$. 
Even though we will not use them, we recall the following  
two important properties of these constant $p_\pi$. 

When $G$ is quasisimple of higher rank and ${\mathcal H}_\pi $ does not contain $G$-invariant
vectors, this real number $p_\pi$ is bounded by a  constant
$p_G<\infty$ (see \cite{Oh1}). 

According to Harish-Chandra, when $G$ is semisimple and $\pi$ is irreducible with finite kernel, 
this real $p_\pi$ is finite (see \cite[Theorem 8.48]{Kn}). 

\subsection{Representations in $L^2(X)$}
\label{subsec:rrx}

Let $X$ be a locally compact space endowed with a continuous action of $G$ 
preserving a Radon measure $\operatorname{vol}$ on $X$.
One has a natural representation $\pi$ of $G$ in $L^2(X)$ given by,
$(\pi(g)\varphi)(x)=\varphi(g^{-1}x)$ for $g$ in $G$, $\varphi$ in $L^2(X)$ and $x$ in $X$.

\begin{lemma}\label{lem:wlpVX}
Let $G$ be a semisimple linear connected  Lie 
group, $p$  a positive even integer,
and $X$ a locally compact space endowed with a continuous action of $G$ 
preserving a Radon measure $\operatorname{vol}$.

The representation of $G$ in $L^2(X)$  is almost $L^{p}$ if and only if,
for any compact subset 
$C$ of $X$ and any $\varepsilon>0$ 
$\operatorname{vol} (gC\cap C)\in L^{p+\varepsilon}(G)$.
\end{lemma}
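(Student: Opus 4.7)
The starting point is the direct identification of volume with a matrix coefficient: for any bounded measurable function $f$ on $X$ supported in a compact $C$, one has
$$c_{f,f}(g)=\int_X f(g^{-1}x)\overline{f(x)}\,d\operatorname{vol}(x),$$
an integral supported in $gC\cap C$, so that $|c_{f,f}(g)|\le\|f\|_\infty^2\operatorname{vol}(gC\cap C)$, with equality when $f=\mathbf{1}_C$. More generally, for $f_1,f_2$ supported in compacts $C_1,C_2$, setting $C=C_1\cup C_2$ yields
$$|c_{f_1,f_2}(g)|\le\|f_1\|_\infty\|f_2\|_\infty\operatorname{vol}(gC\cap C).$$
Thus the whole lemma reduces to a comparison between the volume function and specific matrix coefficients.

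For the ``if'' direction, I would take as dense subset $D\subset L^2(X)$ the space of bounded compactly supported measurable functions (dense by Radon regularity of $\operatorname{vol}$). The estimate above shows that each coefficient $c_{f_1,f_2}$ with $f_1,f_2\in D$ is dominated by a constant multiple of some $\operatorname{vol}(gC\cap C)$, which belongs to $L^{p+\varepsilon}(G)$ for every $\varepsilon>0$ by hypothesis. By Definition \ref{def:wlp} this proves that $\pi$ is almost $L^p$.

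The ``only if'' direction requires more care, because $\mathbf{1}_C$ is typically not a $K$-finite vector and so Proposition \ref{pro:wlphc} cannot be applied to it directly. My plan is to replace $C$ by its $K$-saturation $KC$, which is again compact since $K$ is. The vector $\mathbf{1}_{KC}$ is $K$-invariant, hence $K$-finite with $\dim\langle K\mathbf{1}_{KC}\rangle=1$, so Proposition \ref{pro:wlphc} yields
$$\operatorname{vol}(gKC\cap KC)=\langle\pi(g)\mathbf{1}_{KC},\mathbf{1}_{KC}\rangle\le\Xi(g)^{2/p}\operatorname{vol}(KC).$$
Combining this with the monotonicity $\operatorname{vol}(gC\cap C)\le\operatorname{vol}(gKC\cap KC)$ (which follows from $C\subset KC$) and the fact that $\Xi\in L^{2+\delta}(G)$ for every $\delta>0$ (so that $\Xi^{2/p}\in L^{p+\varepsilon}(G)$ for every $\varepsilon>0$), one concludes that $\operatorname{vol}(gC\cap C)\in L^{p+\varepsilon}(G)$. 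The main obstacle is precisely the failure of $K$-finiteness for characteristic functions of arbitrary compact sets; the $K$-saturation trick together with the uniform decay of Proposition \ref{pro:wlphc} is what makes the argument go through.
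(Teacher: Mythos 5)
Your proof is correct and follows essentially the same route as the paper's: the ``if'' direction bounds $|c_{f_1,f_2}(g)|$ by $\|f_1\|_\infty\|f_2\|_\infty\operatorname{vol}(gC\cap C)$ for compactly supported $f_i$ (the paper uses continuous compactly supported functions where you use bounded measurable ones, an immaterial difference), and the ``only if'' direction passes to a $K$-invariant compact $B\supset C$ so that $\mathbf{1}_B$ is $K$-finite and Proposition~\ref{pro:wlphc} together with $\Xi\in L^{2+\delta}(G)$ gives $\operatorname{vol}(gB\cap B)\in L^{p+\varepsilon}(G)$, hence the same for $C$ by monotonicity. The paper leaves the use of $\Xi\in L^{2+\delta}$ implicit, while you spell it out; otherwise the two arguments coincide.
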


\begin{proof}
If the representation of $G$ in $L^2(X)$  is almost $L^{p}$ then, according to Proposition
\ref{pro:wlphc}, for all $K$-invariant compact set $B$ of $X$,
the function 
$g\mapsto \operatorname{vol} (gB\cap B)=
\langle \pi(g){\bf 1}_B,{\bf 1}_B\rangle $
belongs to $ L^{p+\varepsilon}(G)$.
Since any compact set $C$ of $X$ is included in such 
a $K$-invariant compact set $B$, the function
$g\mapsto \operatorname{vol} (gC\cap C)$
belongs also to $ L^{p+\varepsilon}(G)$.

Conversely, let $D\subset L^2(X)$ be the dense subspace 
of continuous compactly supported 
functions on $X$.
For any two continuous functions $\varphi_1, \varphi_2\in D$, 
the coefficient $\langle \pi(g)\varphi_1,\varphi_2\rangle $
is bounded by 
$\|\varphi_1\|_\infty\|\varphi_2\|_\infty \operatorname{vol} (gC\cap C)$ 
where $C:={\rm Supp}(\varphi_1)\cup{\rm Supp}(\varphi_2)$, 
and hence this coefficient belongs to $ L^{p+\varepsilon}(G)$.
\end{proof}

\section{Representations in $L^2(V)$}
\label{sec:linear}

In this chapter we study the representation of a semisimple Lie group
in $L^2(V)$ where $V$ is a finite dimensional representation.

\subsection{Function $\rho_V$}
\label{subsec:rhoV}

Let $H$ be a  reductive algebraic Lie group,
and $\tau \colon H \to SL_\pm(V)$ a finite dimensional algebraic
representation over $\mathbb{R}$ preserving the Lebesgue measure on $V$.
We  write $d\tau \colon \mathfrak{h} \to \operatorname{End}(V)$ for the differential
representation of $\tau$. Let $\mathfrak{a}=\mathfrak{a}_\mathfrak{h}$ be a 
maximal split abelian subspace in $\mathfrak{h}$.

For an element $Y$ in $\mathfrak{a}$,
we denote by $V_+$ the sum of eigenspaces of $\tau(Y)$ having
positive eigenvalues, 
and set
\begin{equation}\label{eqn:rhoV}
\rho_V(Y)
: = \operatorname{Trace}_{V_+} (d\tau(Y)).
\end{equation}
Since this function $\rho_V \colon \mathfrak{a} \to \mathbb{R}_{\ge0}$ 
will be very important in our analysis,
we begin by a few trivial but useful comments. 
We notice first that, since $H$ is volume preserving, for any $Y\in \mathfrak{a}$,
\begin{equation}\label{eqn:rhopm}
\rho_V(-Y) = \rho_V(Y), 
\end{equation}
\begin{equation}\label{eqn:rho0}
\rho_V(Y) = 0 \Leftrightarrow d\tau(Y)=0.
\end{equation}
This function $\rho_V$ is  invariant under the
finite group 
$W_H := N_H(\mathfrak{a})/Z_H(\mathfrak{a})$.
This group is isomorphic to the Weyl group of the restricted root system
$\Sigma(\mathfrak{h},\mathfrak{a})$ if $H$ is connected.
This function $\rho_V$ is 
continuous and is piecewise linear
i.e. there exist finitely many convex polyhedral cones 
which cover $\mathfrak{a}$ and on which $\rho_V$ is linear.

\begin{example}
For $(\tau,V) = (\operatorname{Ad}, \mathfrak{h})$,
$\rho_{\mathfrak{h}}$ coincides with twice the usual `$\rho$' on the
positive Weyl chamber $\mathfrak{a}_+$ with respect to a positive
system $\Sigma^+(\mathfrak{h},\mathfrak{a})$.
\[
\rho_{\mathfrak{h}}
=  \sum_{\alpha\in\Sigma^{^+}\! (\mathfrak{h},\mathfrak{a})} 
\dim \mathfrak{h}_\alpha \,\alpha
\quad\text{on\/ $\mathfrak{a}_+$},
\]
where $\mathfrak{h}_\alpha\subset \mathfrak{h}$ is the root subspace 
associated to $\alpha$.
\end{example}

For other representations $(\tau,V)$, the maximal convex polyhedral cones on which 
$\rho_V$ is linear are most often much smaller than the Weyl chambers.

\subsection{Criterion for temperedness of $L^2(V)$}

Since the Lebesgue measure on $V$ is
$H$-invariant,
we have a natural unitary representation of $H$ on $L^2(V)$
as in Section \ref{subsec:rrx}.

\begin{theorem}\label{thm:Vtemp}
Let $H$  an algebraic  semisimple Lie group,
$\tau \colon H \to SL_\pm(V)$ an  algebraic
representation and $p$ a positive even integer.
Then, one has the equivalences~:\\
$a)$
$L^2(V)$ is tempered \;\;
$\Longleftrightarrow$\;
$\rho_{\mathfrak{h}}(Y) \le 2\, \rho_V(Y)$ for any $Y\in\mathfrak{a}$.\\
$b)$ 
$L^2(V)$ is almost $L^p$ \;
$\Longleftrightarrow$\;
$\rho_{\mathfrak{h}}(Y) \le p\, \rho_V(Y)$ for any $Y\in\mathfrak{a}$.
\end{theorem}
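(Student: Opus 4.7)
My plan is to derive (b) from Lemma \ref{lem:wlpVX}; part (a) then drops out by taking $p=2$ and invoking Proposition \ref{pro:temwl2}. By Lemma \ref{lem:wlpVX}, the representation on $L^2(V)$ is almost $L^p$ if and only if, for every compact $C \subset V$ and every $\varepsilon > 0$, the function $g \mapsto \operatorname{vol}(gC \cap C)$ lies in $L^{p+\varepsilon}(H)$. Upon replacing $C$ by $KC$, we may assume $C$ is $K$-invariant, and in fact I would fix $C$ to be a closed ball with respect to a $K$-invariant inner product on $V$; both directions of the equivalence will reduce to this single test set.

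The first step is to produce matching upper and lower bounds
$$c_1\, e^{-\rho_V(Y)} \leq \operatorname{vol}(e^Y C \cap C) \leq c_2\, e^{-\rho_V(Y)}$$
as $Y$ varies in $\mathfrak{a}$. Since $Y$ lies in the non-compact part $\mathfrak{p}$ of a Cartan decomposition of $\mathfrak{h}$, $d\tau(Y)$ is self-adjoint for the $K$-invariant inner product, so $V$ splits orthogonally as $V_+(Y) \oplus V_0(Y) \oplus V_-(Y)$ according to the sign of the eigenvalues of $d\tau(Y)$. The element $e^Y$ expands $V_+$ and contracts $V_-$. For $v=v_++v_0+v_-$ to lie in both $C$ and $e^Y C$, one checks that the $v_-$-component is confined to a region of volume $\asymp e^{\operatorname{Trace}_{V_-}(d\tau(Y))} = e^{-\rho_V(Y)}$ (using that $\tau$ is volume preserving, so $\operatorname{Trace}_V d\tau(Y)=0$), while the other components contribute only a bounded factor; this yields both inequalities, with constants $c_1,c_2$ that can be chosen uniformly in $Y$.

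The second step combines this estimate with the Cartan decomposition $H = K\exp(\overline{\mathfrak{a}_+})K$, whose Haar density satisfies
$$\mathcal J(Y) = \prod_{\alpha \in \Sigma^+(\mathfrak{h}, \mathfrak{a})} |\sinh \alpha(Y)|^{\dim \mathfrak{h}_\alpha} \asymp e^{\rho_{\mathfrak{h}}(Y)}$$
as $Y$ goes to infinity in $\overline{\mathfrak{a}_+}$, with $\rho_{\mathfrak{h}}$ as in the example of Section \ref{subsec:rhoV}. Because $C$ is $K$-invariant, $\operatorname{vol}(k_1 e^Y k_2\cdot C\cap C) = \operatorname{vol}(e^Y C\cap C)$, so the condition that $\operatorname{vol}(gC\cap C)\in L^{p+\varepsilon}(H)$ for all $\varepsilon>0$ becomes
$$\int_{\overline{\mathfrak{a}_+}} e^{-(p+\varepsilon)\rho_V(Y) + \rho_{\mathfrak{h}}(Y)}\, dY < \infty \qquad\text{for all }\varepsilon>0.$$

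The final step is to interpret this convergence polyhedrally. Both $\rho_V$ and $\rho_{\mathfrak{h}}$ are piecewise linear on $\mathfrak{a}$, so I would refine $\overline{\mathfrak{a}_+}$ into finitely many closed polyhedral cones on which both functions are linear; on each such cone the integral converges precisely when the linear functional $(p+\varepsilon)\rho_V - \rho_{\mathfrak{h}}$ is strictly positive on the interior. Requiring convergence for every $\varepsilon>0$ is therefore equivalent to the closed inequality $p\,\rho_V(Y) \geq \rho_{\mathfrak{h}}(Y)$ on each cone, hence on all of $\overline{\mathfrak{a}_+}$, and by $W_H$-invariance on all of $\mathfrak{a}$. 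I expect the main technical obstacle to lie in the volume estimate, and in particular in making the lower bound quantitative and uniform across the polyhedral chambers of $\mathfrak{a}$ on which the splitting $V_\pm(Y)$ jumps.
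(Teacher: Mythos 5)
Your plan has the right architecture---reduce via Lemma \ref{lem:wlpVX}, pass to the Cartan decomposition of $H$, estimate $\operatorname{vol}(e^Y C\cap C)$, then do a polyhedral convergence analysis---and this is essentially what the paper does. But there is a genuine gap in the step converting the integral against the Haar density to the integral against $e^{\rho_{\mathfrak h}}$: you assert $D_{\mathfrak h}(Y) := \prod_\alpha |\sinh\alpha(Y)|^{\dim\mathfrak h_\alpha} \asymp e^{\rho_{\mathfrak h}(Y)}$ as $Y\to\infty$ in $\overline{\mathfrak a_+}$, and this is false. Along sequences going to infinity with some $\alpha(Y)$ bounded, the ratio $D_{\mathfrak h}(Y)/e^{\rho_{\mathfrak h}(Y)}$ is not bounded below (it tends to $0$ if $\alpha(Y)\to 0$); only the upper bound $D_{\mathfrak h}\le c\,e^{\rho_{\mathfrak h}}$ holds uniformly on $\overline{\mathfrak a_+}$. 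The paper repairs this by introducing the smoothed density $\widetilde D_{\mathfrak h}(Y) = \int_{\|Z\|\le1} D_{\mathfrak h}(Y+Z)\,dZ$, which \emph{does} satisfy a uniform two-sided bound $a_1 e^{\rho_{\mathfrak h}} \le \widetilde D_{\mathfrak h} \le a_2 e^{\rho_{\mathfrak h}}$; the price of the smoothing is an enlargement of the test set ($C\mapsto e^{\mathfrak a(1)}C$), and the chain $(ii)\Leftrightarrow(iii)\Leftrightarrow(iv)$ in the proof of Proposition \ref{prop:LpV} carries this out. Without some such device, your reduction to $\int_{\overline{\mathfrak a_+}} e^{-(p+\varepsilon)\rho_V+\rho_{\mathfrak h}}\,dY$ does not follow as stated.

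Your volume estimate also takes a different route from the paper's, and you correctly flag its weak point. You use a $K$-invariant ball $C$ and the $Y$-\emph{dependent} spectral splitting $V = V_+(Y)\oplus V_0(Y)\oplus V_-(Y)$, then claim two-sided bounds $c_1 e^{-\rho_V(Y)}\le\operatorname{vol}(e^YC\cap C)\le c_2 e^{-\rho_V(Y)}$ with constants ``uniform in $Y$''; this uniformity across the chambers where the splitting jumps is exactly what you leave unresolved. The paper avoids the issue by working with the $Y$-\emph{independent} weight-space decomposition $V=\bigoplus_{\lambda\in\Delta}V_\lambda$ and product boxes $B=\prod_\lambda B_\lambda$, for which Lemma \ref{lem:volV} gives the exact identity $\operatorname{vol}(e^YB\cap B)=\operatorname{vol}(B)\,e^{-\rho_V(Y)}$ for every $Y$, with no asymptotics; a general compact $C$ is then sandwiched between two such boxes (Proposition \ref{prop:volV}). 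This is simpler and automatically uniform, and I would adopt it. One further small correction: the criterion for $\int_\sigma e^{-f(Y)}\,dY<\infty$ with $f$ linear on a closed cone $\sigma$ is $f>0$ on $\sigma\setminus\{0\}$, not on the interior of $\sigma$ (take $f(x,y)=x$ on the quadrant). Your final conclusion survives because, $H$ being semisimple, $\rho_{\mathfrak h}>0$ off $0$, so the closed inequality $p\rho_V\ge\rho_{\mathfrak h}$ forces $\rho_V>0$ off $0$ and hence $(p+\varepsilon)\rho_V-\rho_{\mathfrak h}\ge\varepsilon\rho_V>0$ off $0$.
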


\begin{remark} 
Inequality
$\rho_{\mathfrak{h}}\le p\, \rho_V$ holds on
$\mathfrak{a}$ 
if and only if it holds
on  $\mathfrak{a}_+$.

Since all the maximal split abelian subspaces of $\mathfrak{h}$ are $H$-conjugate, 
it is clear that this condition
does not depend on the choice of $\mathfrak{a}$.
\end{remark}

\begin{example}
Let $H={\rm SL}(2,\mathbb{R})^d$ with $d\geq 1$.
The unitary representation in $L^2(V)$ is  tempered if and only if 
the kernel of $\tau$ is finite.
\end{example}

\begin{example}
Let $H={\rm SL}(3,\mathbb{R})$.
The unitary representation in $L^2(V)$ is tempered if and only if 
$\dim (V/V^H) > 3$ where $V^H=\{ v\in V: Hv=v\}$.
\end{example}

For $h\in H$, $x\in V$  and a measurable subset $C \subset V$,
we write  $hx$ for $\tau(h)x$ and we set $hC: =\{hx\in V: x\in C\}$.
Similarly, for $a>0$ we set $aC: =\{ax\in V: x\in C\}$. 
We write $\operatorname{vol}(C)$ for the volume of $C$ with respect to
the Lebesgue measure.

\begin{proof}[Proof of Theorem \ref{thm:Vtemp}]
When the kernel of $\tau$ is noncompact, 
both  sides of the equivalence  are false.
Hence we may assume that the kernel of $\tau$ is compact.
Since $H$ is semisimple,
according to Proposition \ref{pro:temwl2} and
Lemma \ref{lem:wlpVX}, 
it is sufficient to prove the
following equivalence:
$$
\begin{minipage}{8em}\it
$\rho_{\mathfrak{h}}(Y)\le p\,\rho_V(Y)$ for any $Y\in\mathfrak{a}$
\end{minipage}\quad
\Longleftrightarrow\quad
\begin{minipage}{17em}\it
$\operatorname{vol}(hC\cap C)\in L^{p+\varepsilon}(H)$
for any compact subset $C$ in $V$ and any $\varepsilon>0$.
\end{minipage}
$$
This statement is a special case of Proposition \ref{prop:LpV} below.
\end{proof}

\subsection{$L^p$-norm of $\operatorname{vol}(hC\cap C)$}
\label{subsec:pplpv}

Suppose now that the kernel of $\tau$ is compact.
According to \eqref{eqn:rho0}, one has
$\rho_{V}(Y)>0$ as soon as $Y\ne 0$.
Hence the real number
\begin{equation}\label{eqn:pV}
p_{V}:=\max_{Y\in\mathfrak{a}\setminus\{0\}}
   \frac{\rho_{\mathfrak{h}}(Y)}{\rho_{V}(Y)}\, 
\end{equation}
is finite.

\begin{proposition}\label{prop:LpV} 
Let $H$ be an algebraic  reductive Lie group,
and $\tau \colon H \to SL_\pm(V)$  a volume preserving algebraic 
representation with compact kernel.
For any real $p>0$, one has the equivalence~:\\ 
\centerline{$p>p_{V}$ $\Longleftrightarrow$
$\operatorname{vol}(hC\cap C) \in L^p(H)$
for any compact set $C$ in $V$.}
\end{proposition}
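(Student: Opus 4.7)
The plan is to convert the $H$-integral $\int_H \operatorname{vol}(hC\cap C)^p\,dh$ into an elementary exponential integral over the positive Weyl chamber $\mathfrak{a}_+$, where the condition $p > p_V$ can be read off directly. Two ingredients feed into this reduction: the Cartan decomposition of the Haar measure of $H$, and a sharp two-sided estimate for $\operatorname{vol}(\exp(Y)B\cap B)$ in terms of $\rho_V(Y)$.

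Using the Cartan decomposition $H = K \exp(\mathfrak{a}_+) K$, the Haar measure on $H$ pushes forward to $J(Y)\,dk_1\,dY\,dk_2$ with density $J(Y) = \prod_{\alpha\in\Sigma^+(\mathfrak{h},\mathfrak{a})}(\sinh\alpha(Y))^{m_\alpha}$; by the example in Section~\ref{subsec:rhoV} this satisfies $J(Y) \le C_1 e^{\rho_\mathfrak{h}(Y)}$ throughout $\mathfrak{a}_+$ and $J(Y) \ge C_2 e^{\rho_\mathfrak{h}(Y)}$ on any subcone bounded away from the walls. Fix a $K$-invariant inner product on $V$ (Weyl's averaging trick on the compact group $\tau(K)$); then Euclidean balls are $K$-invariant. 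For the implication ``$p > p_V \Rightarrow$'' I enclose a given compact set $C$ in a Euclidean ball $B\supset C$ and use $\operatorname{vol}(hC\cap C) \le \operatorname{vol}(hB\cap B)$; for the converse I take $C=B$ directly. In either case $h\mapsto\operatorname{vol}(hB\cap B)$ is bi-$K$-invariant, so the $H$-integral reduces, up to constants, to $\int_{\mathfrak{a}_+} \operatorname{vol}(\exp(Y)B\cap B)^p\, J(Y)\,dY$.

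The key estimate is $\operatorname{vol}(\exp(Y)B\cap B) \asymp e^{-\rho_V(Y)}$ for $Y\in\mathfrak{a}_+$. Using the (fixed, $Y$-independent) weight decomposition $V = \bigoplus_\lambda V_\lambda$ for $\mathfrak{a}$, $\exp(Y)$ acts on $V_\lambda$ by the scalar $e^{\lambda(Y)}$. For a ``box'' $B_{\mathrm{box}} = \prod_\lambda B_\lambda$ with $B_\lambda$ a Euclidean ball in $V_\lambda$, the intersection $\exp(Y)B_{\mathrm{box}}\cap B_{\mathrm{box}}$ factors weight-by-weight and an elementary computation gives
\[
\operatorname{vol}\bigl(\exp(Y)B_{\mathrm{box}}\cap B_{\mathrm{box}}\bigr)
= \prod_\lambda \min\!\bigl(1,\, e^{(\dim V_\lambda)\lambda(Y)}\bigr)\operatorname{vol}(B_{\mathrm{box}})
= e^{-\rho_V(Y)}\operatorname{vol}(B_{\mathrm{box}}),
\]
where the last equality uses the volume-preservation identity $\sum_\lambda (\dim V_\lambda)\lambda = 0$ on $\mathfrak{a}$. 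Sandwiching a Euclidean ball between two rescaled boxes transfers the estimate from boxes to balls with uniform constants.

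Combining the two ingredients, convergence of the $H$-integral is equivalent to convergence of $\int_{\mathfrak{a}_+} e^{\rho_\mathfrak{h}(Y)-p\rho_V(Y)}\,dY$. Since $\rho_\mathfrak{h}$ and $\rho_V$ are continuous, positively homogeneous of degree one, and piecewise linear on finitely many polyhedral subcones, polar coordinates convert this into the question of whether $\rho_\mathfrak{h} - p\rho_V$ is uniformly strictly negative on the compact unit sphere of $\mathfrak{a}_+$, which by the very definition \eqref{eqn:pV} of $p_V$ is equivalent to $p > p_V$. When $p < p_V$ an open cone around a direction with $\rho_\mathfrak{h} > p\rho_V$ produces exponential growth; when $p = p_V$ a tubular neighborhood of the maximizing face makes the angular integral $\int(-\phi)^{-n}\,d\omega$ diverge (the piecewise linearity ensures $-\phi$ vanishes only linearly transversally to the maximizing face). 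The main technical obstacle will be arranging the lower bound cleanly when the optimal direction lies on a wall of $\mathfrak{a}_+$: there the lower bound $J(Y)\gtrsim e^{\rho_\mathfrak{h}(Y)}$ fails, and one must perturb the direction slightly into a chamber of full dimension to recover enough Haar mass, then integrate in a wedge where both the ball-intersection estimate and the Haar density estimate are simultaneously sharp.
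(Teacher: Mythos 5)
Your overall roadmap is the same one the paper follows: reduce the $H$-integral to an $\mathfrak{a}$-integral by Cartan decomposition, establish $\operatorname{vol}(e^Y B \cap B) \asymp e^{-\rho_V(Y)}$ via weight-space boxes (your box computation is correct and is essentially Lemma~\ref{lem:volV}), and then read off the convergence from the piecewise-linear homogeneous function $\rho_{\mathfrak{h}} - p\,\rho_V$. That much is fine.

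The genuine gap is the one you flag yourself but do not resolve: the lower bound $D_{\mathfrak{h}}(Y) \gtrsim e^{\rho_{\mathfrak{h}}(Y)}$ fails near the walls, and "perturbing the direction slightly into a chamber of full dimension" is not enough to finish the converse implication. That perturbation works when $p < p_V$, because then $\rho_{\mathfrak{h}} - p\,\rho_V > 0$ holds on an \emph{open} cone and you can choose one away from the walls. But at the critical value $p = p_V$ the set where $\rho_{\mathfrak{h}} - p\,\rho_V \ge 0$ may be a single ray lying on a wall, and in the neighboring open chambers $\rho_{\mathfrak{h}} - p\,\rho_V$ is strictly negative, so the perturbed integral converges and gives you no contradiction. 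Your proposed "wedge where both estimates are simultaneously sharp" does not exist in that situation, and the claim "$\int(-\phi)^{-n}\,d\omega$ diverges" is precisely what needs a careful argument that takes into account the vanishing of the sinh-density along the wall; as written it is unsupported.

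The paper circumvents this with a smoothing device that you should adopt. It introduces $\widetilde{D}_{\mathfrak{h}}(Y) := \int_{\|Z\|\le 1} D_{\mathfrak{h}}(Y+Z)\,dZ$, which \emph{does} satisfy a two-sided bound $a_1 e^{\rho_{\mathfrak{h}}(Y)} \le \widetilde{D}_{\mathfrak{h}}(Y) \le a_2 e^{\rho_{\mathfrak{h}}(Y)}$ on all of $\mathfrak{a}$ (walls included), because the unit ball around a wall point carries substantial Haar mass. The price of replacing $D_{\mathfrak{h}}$ by $\widetilde{D}_{\mathfrak{h}}$ is paid by enlarging the compact set $C$ to $C' = e^{\mathfrak{a}(1)}C$ and using the monotonicity $\operatorname{vol}(e^{Y-Z}C\cap C) \le \operatorname{vol}(e^Y C' \cap C')$ together with a change of variables, which makes the equivalence $\int \operatorname{vol}(\cdot)^p D_{\mathfrak{h}}\,dY < \infty \Leftrightarrow \int \operatorname{vol}(\cdot)^p \widetilde{D}_{\mathfrak{h}}\,dY < \infty$ exact (quantifying over all compact $C$). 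This is the step you need to insert. Once you have it, the final equivalence with $p > p_V$ follows more cleanly from piecewise linearity than from your polar-coordinate dissection: if $\rho_{\mathfrak{h}} - p\,\rho_V$ is nonnegative at some $Y_0 \neq 0$, Lipschitz continuity plus degree-one homogeneity produce an infinite-volume tube along $\mathbb{R}_{+}Y_0$ on which $e^{\rho_{\mathfrak{h}} - p\,\rho_V}$ is bounded below, so the integral diverges; the strict inequality on the unit sphere gives exponential decay in all directions, hence integrability.
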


In this section we will show how to deduce Proposition \ref{prop:LpV}
from a volume estimate that we will prove in the next section.

\begin{proof}[Proof of Proposition \protect\ref{prop:LpV}]
Let $H_K$ be a maximal compact subgroup of $H$
such that $H=H_K(\exp\mathfrak{a})H_K$ is a Cartan decomposition of $H$.

The Haar measure $dh$ of $H$ is given as
\begin{eqnarray}\label{eqn:haar}
\int_H f(h)dh
= \int_{\mathfrak{a}} f(e^Y) D_{\mathfrak{h}}(Y) dY
\end{eqnarray}
for any $H_K$-biinvariant measurable function $f$ on $H$,
where
\[
D_{\mathfrak{h}}(Y)
:= \prod_{\alpha\in\Sigma^{^+}\! (\mathfrak{h},\mathfrak{a})}
   |\sinh\langle\alpha,Y\rangle|^{\dim\mathfrak{h}_\alpha}
\ \ \text{for $Y\in\mathfrak{a}$}.
\]
We also introduce the function on $\mathfrak{a}$
$$
\widetilde{D}_{\mathfrak{h}}(Y) :=\int_{\|Z\|\leq 1}D_{\mathfrak{h}}(Y+Z)dZ.
$$

We shall prove successively  the following equivalences
\begin{eqnarray*} 
&(i)& \text{ $\operatorname{vol}(hC\cap C)\in L^p(H)$, for any  compact $C\subset V$,}\\
\Longleftrightarrow&(ii)&
\text{$\operatorname{vol}(e^YC\cap C)^p\, D_{\mathfrak{h}}(Y)\in 
L^1(\mathfrak{a})$, for any  compact $C\subset V$,}\\
\Longleftrightarrow&(iii)&
\text{$\operatorname{vol}(e^YC\cap C)^p\, \widetilde{D}_{\mathfrak{h}}(Y)\in 
L^1(\mathfrak{a})$, for any  compact $C\subset V$,}\\
\Longleftrightarrow&(iv)&
\text{$\operatorname{vol}(e^YC\cap C)^p\,  e^{\rho_{\mathfrak{h}}(Y)}\in 
L^1(\mathfrak{a})$, for any  compact $C\subset V$,}\\
\Longleftrightarrow&(v)&
\text{$e^{\rho_{\mathfrak{h}}(Y)-p\,\rho_V(Y)}\in 
L^1(\mathfrak{a})$,}\\
\Longleftrightarrow&(vi)&
\text{$p\,\rho_V(Y)-\rho_{\mathfrak{h}}(Y)>0$, for any $Y\in \mathfrak{a}\smallsetminus 0$.}
\end{eqnarray*}
$(i)\Longleftrightarrow (ii)$ We may choose $C$ to be  $H_K$-invariant by
expanding $C$ if necessary. We apply then
the integration formula  \eqref{eqn:haar} to
the $H_K$-biinvariant function
$\operatorname{vol}(hC\cap C)$.\\
$(ii)\Longleftrightarrow (iii)$ Replace $C$ by a larger compact $C':=e^{\mathfrak{a}(1)}C$
where $\mathfrak{a}(1)$ is the unit ball $\{ Z\in \mathfrak{a}\mid \| Z\|\leq 1\}$.
Since $\operatorname{vol}(e^{Y-Z} C \cap C) \leq
\operatorname{vol}(e^Y C' \cap C')$
for any $Z \in \mathfrak{a}(1)$,
one has, by using the change of variables $Y':=Y-Z$,
\begin{align*}
\int_{\mathfrak{a}}{\operatorname{vol}} (e^Y C \cap C)^p \widetilde{D}_{\mathfrak{h}}(Y)dY
&= \int_{\mathfrak{a}} \int_{\|Z\|\leq 1}
     \operatorname{vol}(e^Y C \cap C)^p D_{\mathfrak{h}}(Y+Z)dYdZ
\\
&\leq \operatorname{vol} (\mathfrak{a}(1))
   \int_{\mathfrak{a}} \operatorname{vol} (e^Y C'\cap C')^p D_{\mathfrak{h}} (Y)dY,
\\
\int_{\mathfrak{a}}{\operatorname{vol}} (e^{Y'} C \cap C)^p D_{\mathfrak{h}}(Y') dY'
&\leq \int_{\mathfrak{a}}\int_{\|Z\|\leq 1}{\operatorname{vol}} (e^{Y-Z} C' \cap C')^p D_{\mathfrak{h}} (Y)dYdZ
\\
&=\operatorname{vol}(\mathfrak{a}(1))^{-1}
    \int_{\mathfrak{a}}{\operatorname{vol}} (e^{Y'} C' \cap C')^p \widetilde{D}_{\mathfrak{h}} (Y') dY'.
\end{align*}
\\
$(iii)\Longleftrightarrow (iv)$ We notice that 
we can find constants $a_1,a_2>0$, such that for any $Y\in \mathfrak{a}$,
the following inequality holds: 
\[
a_1 
\,e^{\rho_{\mathfrak{h}}(Y)}
\le \widetilde{D}_{\mathfrak{h}}(Y)\le  a_2 \,e^{\rho_{\mathfrak{h}}(Y)}.
\]
$(iv)\Longleftrightarrow (v)$ We use Proposition \ref{prop:volV}, that we will prove 
in  Section \ref{subsec:bddC}, 
and that gives, for $C$ large enough,  constants 
$m, M>0$ such that, for any 
$Y\in \mathfrak{a}$,
\[
m \, e^{-\,\rho_V(Y)}
\le \operatorname{vol}(e^YC\cap C)
\le M \, e^{-\,\rho_V(Y)}.
\]
$(v)\Longleftrightarrow (vi)$ We recall that the function $\rho_{\mathfrak{h}}\! -\! p\rho_V$ is 
continuous and piecewise linear.

This proves Proposition \ref{prop:LpV} provided the following Proposition \ref{prop:volV}.
\end{proof}

\subsection{Estimate of $\operatorname{vol}(e^YC\cap C)$}
\label{subsec:bddC}
The following asymptotic estimate of
$\operatorname{vol}(e^Y C \cap C)$ for the linear representation
in $V$
will become a prototype of the
volume estimate for the action on $G/H$ 
which we shall discuss in Section \ref{sec:l2gh}
(Theorem \ref{thm:volX}).

\begin{proposition}\label{prop:volV}
Let $H$ be an algebraic  reductive Lie group,
$\tau\colon H \to SL_\pm(V)$ a volume preserving  algebraic 
representation, and
$C$ be a compact neigh\-borhood of $0$ in $V$.
Then there exist constants $m\equiv m_C>0$ , $M\equiv M_C > 0$ such that
\[
m e^{-\rho_V(Y)}
\le \operatorname{vol}(e^Y C \cap C)
\le M e^{-\rho_V(Y)}
\quad\text{for any $Y \in \mathfrak{a}$. 
}
\]
\end{proposition}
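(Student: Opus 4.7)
The plan is to reduce everything to an explicit computation on box-shaped neighborhoods adapted to the weight decomposition of $V$ under the split torus $\mathfrak{a}$, and then sandwich an arbitrary compact neighborhood $C$ between two such boxes.

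First, I would use that $\mathfrak{a}$ acts on $V$ by commuting diagonalizable operators, so there is a finite set of restricted weights $\Lambda\subset\mathfrak{a}^*$ with a direct sum decomposition $V=\bigoplus_{\lambda\in\Lambda}V^\lambda$, where $d\tau(Y)$ acts by the scalar $\lambda(Y)$ on $V^\lambda$. Choose a basis $(e_1,\dots,e_n)$ of $V$ consisting of weight vectors, say $e_i\in V^{\lambda_i}$, and denote the associated coordinates by $x_i$. In these coordinates $e^Y$ is the diagonal map $x_i\mapsto e^{\lambda_i(Y)}x_i$, and the volume-preserving hypothesis gives $\sum_{i=1}^n\lambda_i(Y)=0$ for every $Y\in\mathfrak{a}$. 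The definition of $\rho_V$ then reads
\[
\rho_V(Y)=\sum_{\lambda_i(Y)>0}\lambda_i(Y)=-\sum_{\lambda_i(Y)<0}\lambda_i(Y).
\]

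Next, for $r>0$ let $C_\square(r):=\{x=\sum x_ie_i:|x_i|\le r\text{ for all }i\}$ be the box of radius $r$ in these coordinates. I would compute $\operatorname{vol}(e^YC_\square(r)\cap C_\square(r))$ directly: a point $x$ lies in this intersection if and only if $|x_i|\le r$ and $|e^{-\lambda_i(Y)}x_i|\le r$ for every $i$, i.e.\ $|x_i|\le r\min(1,e^{\lambda_i(Y)})$. Since the conditions decouple over $i$, the (basis-)Lebesgue volume is
\[
\operatorname{vol}\bigl(e^YC_\square(r)\cap C_\square(r)\bigr)
=(2r)^n\prod_{i=1}^n\min(1,e^{\lambda_i(Y)})
=(2r)^n\exp\!\Bigl(\sum_{\lambda_i(Y)<0}\lambda_i(Y)\Bigr)
=(2r)^n\,e^{-\rho_V(Y)}.
\]
This is an \emph{exact} equality, which already gives both the upper and lower bounds in the box case, with the same multiplicative constant. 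The Lebesgue measure of the statement differs from the one attached to our basis by a fixed positive scalar, which is harmlessly absorbed into $m$ and $M$.

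Finally, for a general compact neighborhood $C$ of $0$ one can pick $0<r_1\le r_2$ such that $C_\square(r_1)\subset C\subset C_\square(r_2)$. Monotonicity of $C\mapsto \operatorname{vol}(e^YC\cap C)$ under inclusion then yields
\[
(2r_1)^n\,e^{-\rho_V(Y)}\le\operatorname{vol}(e^YC\cap C)\le(2r_2)^n\,e^{-\rho_V(Y)}
\]
(up to the normalization constant between the two Lebesgue measures), which is exactly the claim with $m\equiv m_C=(2r_1)^n$ and $M\equiv M_C=(2r_2)^n$. There is no real obstacle in this approach: the only thing to be careful about is to ensure the basis is chosen so that the decomposition $V=\bigoplus V^\lambda$ is respected, so that $e^Y$ is diagonal in the chosen coordinates and the computation of the box volume factors cleanly. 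The slight inelegance is that the proof uses only the diagonalizability of $\tau|_{\exp\mathfrak{a}}$ and the determinant-one condition; no Cartan or Iwasawa decomposition of $H$ intervenes, which matches the fact that the proposition is a purely linear statement about the action on $V$.
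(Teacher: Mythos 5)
Your proof is correct and takes essentially the same route as the paper: reduce to an exact volume identity for neighborhoods adapted to the weight decomposition (the paper states it for arbitrary products $\prod_\lambda B_\lambda$ of convex neighborhoods in the weight spaces, which degenerates immediately to your coordinate boxes), then sandwich a general compact neighborhood $C$ between two such sets and invoke monotonicity. Both arguments hinge on the same observation, namely that for a set $B$ star-shaped about the origin and compatible with the weight grading one has $B\cap e^{-Y}B=\prod_\lambda e^{-\lambda(Y)^+}B_\lambda$, so the only difference is cosmetic (boxes of a single radius versus convex weight-space neighborhoods).
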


To see this, 
write
$\Delta \equiv \Delta(V,\mathfrak{a})\subset \mathfrak{a}^*$
for the set of weights of the representation
$d\tau|_{\mathfrak{a}}\colon \mathfrak{a} \to \operatorname{End}(V)$, 
and
\begin{equation}\label{eqn:Vwt}
\textstyle
V = \bigoplus_{\lambda\in\Delta} V_\lambda,
\quad
v = \sum v_\lambda
\end{equation}
for the corresponding weight space decomposition.

\begin{lemma}\label{lem:volV}
For each $\lambda\in\Delta$,
let  $B_\lambda$ be a convex neighborhood of $0$ in $V_\lambda$,
and let
$B := \prod_\lambda B_\lambda$.
Then, one has 
$$
\operatorname{vol}(e^Y B \cap B) 
 = \operatorname{vol}(B) e^{-\rho_V(Y)}
\quad \text{ for any $Y \in \mathfrak{a}$.}
$$
\end{lemma}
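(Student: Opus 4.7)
The plan is to reduce the volume computation to a product over the weight spaces of $\mathfrak{a}$, and to handle each factor using the hypothesis that $B_\lambda$ is a convex neighborhood of $0$.

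First, I would observe that since $\mathfrak{a}$ acts semisimply on $V$ via $d\tau$, for any $Y \in \mathfrak{a}$ the element $e^Y$ acts on the weight space $V_\lambda$ as scalar multiplication by $e^{\lambda(Y)}$. Consequently, writing $B = \prod_\lambda B_\lambda$ under the decomposition $V = \bigoplus_\lambda V_\lambda$, one has $e^Y B = \prod_\lambda e^{\lambda(Y)} B_\lambda$ and hence the intersection factors:
\[
e^Y B \cap B = \prod_\lambda \bigl(e^{\lambda(Y)} B_\lambda \cap B_\lambda\bigr),
\quad
\operatorname{vol}(e^Y B \cap B) = \prod_\lambda \operatorname{vol}\bigl(e^{\lambda(Y)} B_\lambda \cap B_\lambda\bigr).
\]

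Next, I would compute each factor. For any $t > 0$ and any convex neighborhood $B_\lambda$ of $0$ in $V_\lambda$, convexity combined with $0 \in B_\lambda$ implies $sB_\lambda \subset B_\lambda$ whenever $0 \le s \le 1$. Hence if $t \geq 1$, then $B_\lambda \subset tB_\lambda$ and the intersection equals $B_\lambda$, while if $t \leq 1$, the intersection equals $tB_\lambda$, of volume $t^{\dim V_\lambda}\operatorname{vol}(B_\lambda)$. In both cases,
\[
\operatorname{vol}(tB_\lambda \cap B_\lambda) = \min(t,1)^{\dim V_\lambda}\operatorname{vol}(B_\lambda).
\]
Applied with $t = e^{\lambda(Y)}$, this gives a factor $e^{\min(\lambda(Y),0)\,\dim V_\lambda}\operatorname{vol}(B_\lambda)$.

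Finally, I would combine the factors and match with $\rho_V(Y)$. Multiplying gives
\[
\operatorname{vol}(e^Y B \cap B) = \operatorname{vol}(B)\, \exp\Bigl(\sum_\lambda \min(\lambda(Y),0)\,\dim V_\lambda\Bigr).
\]
The exponent equals $\sum_{\lambda(Y)<0} \lambda(Y)\,\dim V_\lambda$, and the hypothesis that $\tau$ takes values in $SL_\pm(V)$ forces $\sum_\lambda \lambda(Y)\,\dim V_\lambda = 0$. Splitting this sum according to the sign of $\lambda(Y)$ therefore yields
\[
\sum_{\lambda(Y)<0} \lambda(Y)\,\dim V_\lambda = -\!\!\sum_{\lambda(Y)>0}\lambda(Y)\,\dim V_\lambda = -\operatorname{Trace}_{V_+}(d\tau(Y)) = -\rho_V(Y),
\]
which gives the desired identity. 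There is no real obstacle here; the only point requiring care is the convexity argument that makes the intersection in each weight space factor cleanly, and the bookkeeping that converts $\min(\lambda(Y),0)$ into $-\rho_V(Y)$ using the volume-preserving hypothesis.
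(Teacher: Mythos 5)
Your proof is correct and takes essentially the same approach as the paper's: factor the intersection over weight spaces, use convexity together with $0\in B_\lambda$ to compute each factor, and invoke the volume-preserving hypothesis to identify the resulting exponent with $-\rho_V(Y)$. The only difference is cosmetic bookkeeping --- the paper applies the measure-preserving map $e^{-Y}$ at the outset to land directly on $\sum_\lambda \lambda(Y)^+\dim V_\lambda = \rho_V(Y)$, whereas you compute $\operatorname{vol}(e^Y B\cap B)$ directly and use the trace-zero identity at the end.
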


\begin{proof}
For any real $t$, one  has
$B_\lambda \cap e^{-t}B_\lambda = e^{-t^+}B_\lambda$ where $t^+:=\max(t,0)$. 
Then we get
\begin{equation*}\textstyle
B \cap e^{-Y} B
= \prod_\lambda (B_\lambda \cap e^{-\lambda(Y)} B_\lambda)
= \prod_\lambda e^{-\lambda(Y)^+} B_\lambda,
\end{equation*}
and $\quad\quad\quad\quad
\operatorname{vol}(e^Y B \cap B)
= \operatorname{vol}( B \cap e^{-Y}B)
= e^{-\rho_V(Y)} \operatorname{vol}(B).
$
\end{proof}

\begin{proof}[Proof of Proposition \ref{prop:volV}]
We take $\{B_\lambda\}$ and $\{B'_\lambda\}$ such that
\[\textstyle
\prod_\lambda B_\lambda \subset C \subset \prod_\lambda B'_\lambda
\]
and we apply Lemma \ref{lem:volV}.
\end{proof}

\section{Representations in $L^2(G/H)$}
\label{sec:l2gh}

In this chapter we study the representations of an algebraic semisimple Lie group
in $L^2(X)$ where $X$ is a homogeneous space with reductive isotropy.

\subsection{Criterion for temperedness of $L^2(G/H)$}

Let $G$ be an algebraic reductive Lie group
and $H$ an algebraic reductive subgroup of $G$.
Since the homogeneous space $X=G/H$ carries a $G$-invariant Radon measure,
there is a natural  unitary representation of $G$ on $L^2(G/H)$ 
as in Section \ref{subsec:rrx}.
We want to study the temperedness of this representation.

Let $\mathfrak{q}$ be an $H$-invariant complementary subspace  of
the Lie algebra $\mathfrak{h}$ of $H$ in $\mathfrak{g}$.
We fix a 
maximal split abelian subspace
$\mathfrak{a}$   of $\mathfrak{h}$ and we define
$\rho_{\mathfrak{q}}\colon \mathfrak{a} \to \mathbb{R}_{\ge0}$
for the $H$-module $\mathfrak{q}$
as in Section \ref{subsec:rhoV}.

Here is the main result of this chapter~:

\begin{theorem}\label{thm:Xtemp}
Let $G$ be an algebraic semisimple Lie group,
$H$ an algebraic reductive subgroup of $G$,
and $p$ a positive even integer.
Then, one has the equivalences~:\\
$a)$
$L^2(G/H)$ is tempered\;\;\;
$\Longleftrightarrow$\;
$\rho_{\mathfrak{h}}(Y) \le \rho_{\mathfrak{q}}(Y)$ for any $Y\in\mathfrak{a}$.\\
$b)$ 
$L^2(G/H)$ is almost $L^p$ \;
$\Longleftrightarrow$\;
$\rho_{\mathfrak{g}}(Y) \le p\,\rho_{\mathfrak{q}}(Y)$ for any $Y\in\mathfrak{a}$.
\end{theorem}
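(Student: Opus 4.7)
The plan is to mirror the proof of Theorem \ref{thm:Vtemp}, with the finite dimensional representation $V$ replaced by the homogeneous space $X := G/H$ and the linear $H$-action replaced by the $G$-action by left translation. First, Proposition \ref{pro:temwl2} together with Lemma \ref{lem:wlpVX} reduces the statement to the equivalence
$$
\rho_{\mathfrak{g}}(Y) \le p \, \rho_{\mathfrak{q}}(Y) \text{ for all } Y\in\mathfrak{a}
\quad \Longleftrightarrow \quad
\operatorname{vol}_X(gC \cap C) \in L^{p+\varepsilon}(G)
$$
for every compact $C \subset X$ and every $\varepsilon>0$; as in the linear case we may also assume that the kernel of the $G$-action on $X$ is compact, so that the relevant function $\rho_{\mathfrak{q}}$ vanishes only at $0$.

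Second, I would establish an analog of Proposition \ref{prop:volV} for the action of $H$ on $X$. Let $x_0 = eH \in X$. Since $H$ fixes $x_0$, the exponential map identifies a neighborhood of $x_0$ in $X$ with a neighborhood of $0$ in $\mathfrak{q}$, transporting the linearized $H$-action to the adjoint representation of $H$ on $\mathfrak{q}$. Thus, choosing $C$ to be a suitable compact neighborhood of $x_0$ and applying Lemma \ref{lem:volV} to the $H$-module $\mathfrak{q}$, one obtains constants $m, M > 0$ with
$$
m\, e^{-\rho_{\mathfrak{q}}(Y)} \le \operatorname{vol}_X(e^Y C \cap C) \le M\, e^{-\rho_{\mathfrak{q}}(Y)}
\qquad \text{for every } Y \in \mathfrak{a}.
$$

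The heart of the proof is to integrate this estimate over all of $G$, not merely over $H$. I would fix a maximal split abelian $\mathfrak{a}_G \supset \mathfrak{a}$ in $\mathfrak{g}$ and a maximal compact $K_G \supset K_H$ of $G$, so that $G = K_G \exp(\mathfrak{a}_G^+) K_G$. After enlarging $C$ to be $K_G$-invariant, the function $g \mapsto \operatorname{vol}_X(gC \cap C)$ is $K_G$-bi-invariant and is supported in a bounded thickening of $K_G H K_G$, since $gC \cap C \neq \emptyset$ forces $g$ to lie near the stabilizer $H$ of $x_0$ modulo $K_G$ on both sides. On this support one parameterizes $g = k_1 e^Y k_2$ with $Y$ in the $H$-Cartan chamber $\mathfrak{a}^+$, and a computation of the Jacobian of the map $(k_1,Y,k_2) \mapsto k_1 e^Y k_2$ on the appropriate slice yields a density $\asymp e^{\rho_{\mathfrak{g}}(Y)}$, with $\rho_{\mathfrak{g}}$ as defined in Section \ref{subsec:rhoV} for the adjoint action of $H$ on $\mathfrak{g}$. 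Combining this with the volume estimate yields
$$
\int_G \operatorname{vol}_X(gC \cap C)^{p+\varepsilon}\, dg \;\asymp\; \int_{\mathfrak{a}^+} e^{\rho_{\mathfrak{g}}(Y) - (p+\varepsilon)\rho_{\mathfrak{q}}(Y)}\, dY,
$$
which, by the continuity and piecewise-linearity of the functions involved, is finite for every $\varepsilon > 0$ if and only if $\rho_{\mathfrak{g}} \le p\, \rho_{\mathfrak{q}}$ on $\mathfrak{a}$.

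The main obstacle is the Jacobian computation in the previous step: one must pass from the classical Haar decomposition along $\mathfrak{a}_G^+$, whose density uses the full root system of $(\mathfrak{g},\mathfrak{a}_G)$, to an integration along the smaller slice $\mathfrak{a}^+ \subset \mathfrak{a} \subset \mathfrak{h}$ with a density involving only the $\mathfrak{a}$-weights on $\mathfrak{g}$. This requires an explicit description of $K_G H K_G \subset G$ and a careful bookkeeping of the fibres of $(k_1,Y,k_2) \mapsto k_1 e^Y k_2$ through the common centralizer of $\mathfrak{a}$ in $K_G$, together with a verification that the transverse $K_G$-directions do not affect the asymptotic behaviour of $\operatorname{vol}_X(gC \cap C)$. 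The volume estimate along $\mathfrak{a}$ itself is essentially a direct transport of the linear calculation of Section \ref{sec:linear}.
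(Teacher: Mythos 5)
Your outline captures the correct high‑level reduction (Propositions \ref{pro:temwl2} and Lemma \ref{lem:wlpVX} bring the problem down to an $L^{p+\varepsilon}$ estimate on $g\mapsto\operatorname{vol}(gC\cap C)$, and linearization at $x_0$ transports Proposition \ref{prop:volV} to $G/H$), and these are indeed the first moves in the paper. But the two steps you flag as ``main obstacle'' and ``essentially a direct transport'' are precisely where the real work lies, and as stated your argument has genuine gaps there.

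First, the volume estimate. The linearization at $x_0$ gives $\operatorname{vol}(e^YC_0\cap C_0)\asymp e^{-\rho_{\mathfrak q}(Y)}$ only for a \emph{small} neighborhood $C_0$ of $x_0$ (this is Lemma \ref{lem:CC}). Once you enlarge $C$ to be $K$-invariant — which you need in order to make $g\mapsto\operatorname{vol}(gC\cap C)$ bi-$K$-invariant — the upper bound $\operatorname{vol}(e^YC\cap C)\le M\,e^{-\rho_{\mathfrak q}(Y)}$ simply fails: the correct asymptotics for such $C$ is $e^{-\rho_{\mathfrak q}^{\min}(Y)}$ with $\rho_{\mathfrak q}^{\min}(Y)=\min_{w\in W(Y;\mathfrak a)}\rho_{\mathfrak q}(wY)$, because $K$-invariance brings in every Weyl translate $k_w^{-1}C_0$ of the small neighborhood, and the intersection is governed by whichever translate decays slowest (this is the content of Theorem \ref{thm:volX} and Corollary \ref{cor:CC}). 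Your final numerical criterion survives by accident only because $\rho_{\mathfrak g}$ is $W_G$-invariant, so $\max_Y\rho_{\mathfrak g}/\rho_{\mathfrak q}=\max_Y\rho_{\mathfrak g}/\rho_{\mathfrak q}^{\min}$ — but nothing in your argument identifies this phenomenon. Worse, proving the upper bound $\operatorname{vol}(e^YC\cap C)\le M\,e^{-\rho_{\mathfrak q}^{\min}(Y)}$ for a general compact $C$ is emphatically \emph{not} a direct transport of the linear case: a general $C$ has no reason to be linearizable near a single base point, and parts of $C$ flow off to infinity along different facets of $\mathfrak a$ at different rates. The paper handles this by embedding $X$ as a closed orbit in a $G$-module $V$ via Chevalley's theorem, decomposing $\mathfrak a$ into facets $F$, estimating volumes near fixed-point loci $X^{\mathfrak a_F}$ via $G$-conjugates of $x_0$ (which is where $\rho_{\mathfrak q}^{\min}$ inevitably enters, since the conjugating element need not normalize $\mathfrak a$), discarding the pieces of $C$ that escape along a given facet, and gluing with a covering lemma. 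This is the bulk of Section 4 and cannot be elided.

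Second, the reduction from $G$-integration to $\mathfrak a$-integration. The set $K\exp(\mathfrak a)K=KHK$ has measure zero in $G$ whenever $\mathfrak a\subsetneq\mathfrak a_{\mathfrak g}$, so there is no honest Jacobian density along the slice $(k_1,Y,k_2)\mapsto k_1e^Yk_2$ with $Y\in\mathfrak a^+$, and no amount of bookkeeping through the centralizer of $\mathfrak a$ in $K$ will produce one. The paper instead keeps the standard Cartan integral over $\mathfrak a_{\mathfrak g}$ with density $D_{\mathfrak g}\asymp e^{\rho_{\mathfrak g}}$, and invokes the properness results of \cite{Be}, \cite{Ko} to show that $\operatorname{Supp}\bigl(Y\mapsto\operatorname{vol}(e^YC\cap C)\bigr)$ lies in a bounded $W_G$-invariant thickening of $\mathfrak a$ inside $\mathfrak a_{\mathfrak g}$; a thickening-and-reabsorbing argument (replace $C$ by $e^{\mathfrak a_{\mathfrak g}(R)}C$, use that $\rho_{\mathfrak g}$ and $\rho_{\mathfrak q}^{\min}$ are Lipschitz) then collapses the $\mathfrak a_{\mathfrak g}$-integral to an $\mathfrak a$-integral. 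So the right ingredient at this juncture is a properness input on $\mu(SHS^{-1})$, not a change-of-variables formula.
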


\begin{remark} 
Since $\rho_{\mathfrak{g}}=\rho_{\mathfrak{h}}+\rho_{\mathfrak{q}}$, 
one has the equivalence\\ 
\centerline{$\rho_{\mathfrak{h}} \le \rho_{\mathfrak{q}}
\Leftrightarrow \rho_{\mathfrak{g}} \le 2\,\rho_{\mathfrak{q}}$.}

The inequality
$\rho_{\mathfrak{g}}\le p\,\rho_{\mathfrak{q}}$ holds on
$\mathfrak{a}$ if and only if it holds
on $\mathfrak{a}_+$.
\end{remark}

\begin{proof}[Proof of Theorem \ref{thm:Xtemp}]
When the kernel of  the action of $G$ on $G/H$ is noncompact, 
both sides of the equivalence are false. 
Hence we may assume that this  kernel is compact.
But then,
according to 
Proposition \ref{pro:temwl2} and Lemma \ref{lem:wlpVX}, it is sufficient to prove the
following equivalence:
$$
\begin{minipage}{8em}\it
$\rho_{\mathfrak{g}}(Y)\le p\,\rho_{\mathfrak{q}}(Y)$ for any $Y\in\mathfrak{a}$
\end{minipage}\quad
\Longleftrightarrow\quad
\begin{minipage}{17em}\it
$\operatorname{vol}(gC\cap C)\in L^{p+\varepsilon}(G)$
for any compact subset $C$ in $G/H$ and any $\varepsilon>0$.
\end{minipage}
$$
This statement is a special case of Proposition \ref{prop:LpX} below.
\end{proof}

\subsection{$L^p$-norm of  $\operatorname{vol}(gC\cap C)$}

We assume that  the action of $G$ on $G/H$ has compact kernel
or, equivalently, that
the action of $H$ on $\mathfrak{q}$
has compact kernel. Then, according to \eqref{eqn:rho0},
one has $\rho_{\mathfrak{q}}(Y)>0$ as soon as $Y\ne0$.
Hence the real number
\begin{equation}\label{eqn:pX}
p_{G/H}:= \max_{Y\in\mathfrak{a}\setminus\{0\}}
   \frac{\rho_{\mathfrak{g}}(Y)}{\rho_{\mathfrak{q}}(Y)}
\end{equation}
is finite.

\begin{proposition}\label{prop:LpX} 
Let $G$ be an algebraic reductive Lie group
and $H$ an algebraic reductive subgroup of $G$
such that the action of $G$ on $G/H$ has compact kernel.
For any real $p\geq 1$, one has the equivalence~:\\ 
\centerline{$p>p_{G/H}$ $\Longleftrightarrow$
$\operatorname{vol}(gC\cap C) \in L^p(G)$
for any compact set $C$ in $G/H$.}
\end{proposition}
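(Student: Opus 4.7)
My plan is to mirror, step for step, the chain of equivalences $(i)\Leftrightarrow\cdots\Leftrightarrow(vi)$ from the proof of Proposition \ref{prop:LpV}, replacing the pair $(H,V)$ by $(G,G/H)$. The two structural ingredients are, on one side, a bi-$K$-invariant reduction via a Cartan decomposition of $G$, and on the other side a sharp volume estimate $\operatorname{vol}(e^YC\cap C)\asymp e^{-\rho_\mathfrak{q}(Y)}$ playing the role of Proposition \ref{prop:volV}.

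First, I would enlarge the compact set $C\subset G/H$ so that $KC=C$, where $K$ is a maximal compact subgroup of $G$ compatible with a Cartan decomposition $G=K(\exp\mathfrak{a}_\mathfrak{g})K$, with $\mathfrak{a}_\mathfrak{g}$ a maximal split abelian subspace of $\mathfrak{g}$ containing $\mathfrak{a}=\mathfrak{a}_\mathfrak{h}$. Then $g\mapsto\operatorname{vol}(gC\cap C)$ is bi-$K$-invariant, so the obvious $G$-analog of the integration formula \eqref{eqn:haar} converts the $L^p(G)$-integrability of this function into the $L^1(\mathfrak{a}_\mathfrak{g})$-integrability of $\operatorname{vol}(e^YC\cap C)^pD_\mathfrak{g}(Y)$. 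The smoothing arguments $(ii)\Leftrightarrow(iii)\Leftrightarrow(iv)$ of the earlier proof carry over verbatim: I shift $Y$ by a unit ball absorbed into a slight enlargement of $C$, and use $a_1e^{\rho_\mathfrak{g}(Y)}\le\widetilde{D}_\mathfrak{g}(Y)\le a_2e^{\rho_\mathfrak{g}(Y)}$ to replace $D_\mathfrak{g}(Y)$ by $e^{\rho_\mathfrak{g}(Y)}$ up to two-sided constants.

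The key new ingredient is the $G/H$-analog of Proposition \ref{prop:volV} (namely Theorem \ref{thm:volX}), giving, for any sufficiently large compact neighborhood $C$ of $eH$, constants $m,M>0$ with
\[
m\,e^{-\rho_\mathfrak{q}(Y)}\;\le\;\operatorname{vol}(e^Y C\cap C)\;\le\;M\,e^{-\rho_\mathfrak{q}(Y)}\quad\text{for every $Y\in\mathfrak{a}_\mathfrak{g}$,}
\]
the function $\rho_\mathfrak{q}$ being interpreted on $\mathfrak{a}_\mathfrak{g}$ and restricting on $\mathfrak{a}_\mathfrak{h}$ to the one of Section \ref{subsec:rhoV}. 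Granting this, step $(iv)\Leftrightarrow(v)$ reduces the problem to the $L^1(\mathfrak{a}_\mathfrak{g})$-integrability of $e^{\rho_\mathfrak{g}(Y)-p\rho_\mathfrak{q}(Y)}$. Since this exponent is continuous, positively homogeneous of degree one and piecewise linear, the step $(v)\Leftrightarrow(vi)$ copies over unchanged and gives the equivalence with $\rho_\mathfrak{g}(Y)<p\,\rho_\mathfrak{q}(Y)$ for every nonzero $Y$, i.e.\ with $p>p_{G/H}$ by definition of the latter.

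The principal obstacle is clearly the volume estimate. In the linear case it was essentially automatic from the product decomposition $V=\bigoplus_\lambda V_\lambda$ into $\mathfrak{a}$-weight spaces (Lemma \ref{lem:volV}). The space $G/H$ carries no such global product structure, so the proof has to be done in a local chart: use $\exp\colon\mathfrak{q}\to G/H$ near $eH$, decompose $\mathfrak{q}$ into $\mathfrak{a}$-weight subspaces, and control both the nonlinearity of the exponential map and the behaviour of the $\mathfrak{a}_\mathfrak{g}$-directions transverse to $\mathfrak{a}_\mathfrak{h}$, uniformly in $Y$. Packaging this local analysis into sharp two-sided bounds valid on all of $\mathfrak{a}_\mathfrak{g}$ is the technical heart of the matter; everything else is a faithful transcription of the linear argument.
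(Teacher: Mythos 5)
Your high-level plan — transcribe the chain $(i)\Leftrightarrow\cdots\Leftrightarrow(vi)$ of the $L^2(V)$ case, using the Cartan decomposition $G=K(\exp\mathfrak{a}_\mathfrak{g})K$ and a two-sided volume estimate à la Proposition \ref{prop:volV} — is the right skeleton, but you have dropped the single most delicate step and stated an incorrect version of the volume estimate. After step $(iv)$ you are left with an $L^1$-integrability condition over $\mathfrak{a}_\mathfrak{g}$, but $\rho_\mathfrak{q}$ is only defined on $\mathfrak{a}=\mathfrak{a}_\mathfrak{h}$: the $H$-action on $\mathfrak{q}$ gives a weight decomposition only for $\mathfrak{a}_\mathfrak{h}$, not for all of $\mathfrak{a}_\mathfrak{g}$, so there is no canonical way to ``interpret $\rho_\mathfrak{q}$ on $\mathfrak{a}_\mathfrak{g}$.'' The paper handles this by inserting the step $(iv)\Leftrightarrow(v)$, which shrinks the domain of integration from $\mathfrak{a}_\mathfrak{g}$ to a bounded neighborhood of $W_G\cdot\mathfrak{a}$. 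That step is a genuine theorem: it relies on the properness estimate $\mu(SHS^{-1})\subset\mu(H)+\mathfrak{a}_\mathfrak{g}(\delta)\bmod W_G$ from \cite{Be} or \cite{Ko}, applied to a compact $S$ with $C\subset SH/H$, which shows that $\operatorname{vol}(e^YC\cap C)=0$ for $Y\in\mathfrak{a}_\mathfrak{g}$ away from $\bigcup_{w\in W_G}w\mathfrak{a}$. Without this input your reduction has no control over the transverse $\mathfrak{a}_\mathfrak{g}$-directions, and the appeal to ``controlling them uniformly in $Y$'' is hand-waving at exactly the point where an external theorem is needed.

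Second, your stated volume estimate $\operatorname{vol}(e^YC\cap C)\asymp e^{-\rho_\mathfrak{q}(Y)}$ is false; the correct statement (Theorem \ref{thm:volX}) is $\operatorname{vol}(e^YC\cap C)\asymp e^{-\rho^{\min}_\mathfrak{q}(Y)}$ for $Y\in\mathfrak{a}$, where $\rho^{\min}_\mathfrak{q}(Y)=\min_{w\in W(Y;\mathfrak{a})}\rho_\mathfrak{q}(wY)$. The $\min$ is not a cosmetic adjustment: for a $K$-invariant $C$ and any $w$ with $wY\in\mathfrak{a}$, one gets $\operatorname{vol}(e^YC\cap C)\ge\operatorname{vol}(e^{wY}C_0\cap C_0)\gtrsim e^{-\rho_\mathfrak{q}(wY)}$, so as soon as some $w$ fixing $\mathfrak{a}$ satisfies $\rho_\mathfrak{q}(wY)<\rho_\mathfrak{q}(Y)$ your claimed upper bound $Me^{-\rho_\mathfrak{q}(Y)}$ is violated at large $\|Y\|$. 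The final equivalence with $p>p_{G/H}$ still comes out right only because of the identity \eqref{eqn:pX2}, $\max_Y\rho_\mathfrak{g}/\rho_\mathfrak{q}=\max_Y\rho_\mathfrak{g}/\rho^{\min}_\mathfrak{q}$, which uses $W_G$-invariance of $\rho_\mathfrak{g}$; you do not mention this identity, and your argument would not be self-consistent without it. Finally, the local chart you sketch for proving the volume estimate (linearize once at $eH$ and handle the global picture by ``uniform control'') is not the paper's route and would not succeed on its own: the actual proof realizes $X$ as a closed orbit in a Chevalley representation, stratifies $\mathfrak{a}$ by facets $F$, linearizes near the compact sets $X^{\mathfrak{a}_F}$ of points fixed by $\exp\mathfrak{a}_F$ (which is where the various Weyl-conjugate copies of $\rho_\mathfrak{q}$ — hence $\rho^{\min}_\mathfrak{q}$ — appear), and then pieces together the facet-by-facet bounds. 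That global decomposition is precisely what your proposal omits.
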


In this section we will show how to deduce Proposition \ref{prop:LpX}
from a volume estimate that we will prove in the following sections.
For that we will use another equivalent definition of the constant $p_{G/H}$.
\vspace{1em}

We extend $\mathfrak{a}$ to a maximal split abelian subspace
$\mathfrak{a}_{\mathfrak{g}}$ of $\mathfrak{g}$ and we choose 
a maximal compact subgroup  $K$ of $G$ such that
$H_K:=H\cap K$ is a maximal compact subgroup of $H$,
and that
$G=K(\exp\mathfrak{a}_{\mathfrak{g}})K$ and
$H=H_K(\exp\mathfrak{a}) H_K$
are Cartan decompositions of $G$ and $H$, respectively.

Let $W_G$ be the finite group 
$W_G:=N_G(\mathfrak{a}_{\mathfrak{g}})/Z_G(\mathfrak{a}_{\mathfrak{g}})
\simeq  N_K(\mathfrak{a}_{\mathfrak{g}})/Z_K(\mathfrak{a}_{\mathfrak{g}})$.
When $G$ is connected, $W_G$ is
the Weyl group of the restricted root system
$\Sigma(\mathfrak{g},\mathfrak{a}_{\mathfrak{g}})$.

For $Y \in \mathfrak{a}$,
we define a subset of $W_G$ by
\begin{equation}\label{eqn:WYa}
W(Y;\mathfrak{a}) := \{ w \in W_G: wY \in \mathfrak{a} \}.
\end{equation}
We notice that $W(Y;\mathfrak{a}) \ni e$ for any $Y \in \mathfrak{a}$,
and $W(0;\mathfrak{a}) = W_G$.
We set
\begin{align}
\rho_{\mathfrak{q}}^{\min}(Y)
&:= \min_{w\in W(Y;\mathfrak{a})} \rho_{\mathfrak{q}}(wY).
\label{eqn:rhomin}
\end{align}
We can then rewrite  Definition \eqref{eqn:pX} by the equivalent formula 
\begin{equation}\label{eqn:pX2}
p_{G/H}= \max_{Y\in\mathfrak{a}\setminus\{0\}}
   \frac{\rho_{\mathfrak{g}}(Y)}{\rho_{\mathfrak{q}}^{\min}(Y)}.
\end{equation}

\begin{proof}[Proof of Proposition \protect\ref{prop:LpX}]
The Haar measure $dg$ on $G$ is given as
\begin{eqnarray}\label{eqn:haarg}
\int_G f(g) d g  = \int_{ \mathfrak{a}_{\mathfrak{g}} } f ( e^Y ) 
D_{\mathfrak{g}}(Y)  d Y ,
\end{eqnarray}
for any $K$-biinvariant measurable function $f$ on $G$,
where $D_{\mathfrak{g}}$ is the $W_G$-invariant function on $\mathfrak{a}_{\mathfrak{g}}$ given by
\begin{eqnarray*}\label{eqn:wycc}
D_{\mathfrak{g}}(Y)
:= \prod_{\alpha\in\Sigma^{^+}\! (\mathfrak{g},\mathfrak{a}_{\mathfrak{g}})} 
   |\sinh \langle\alpha,Y\rangle|^{\dim\mathfrak{g}_\alpha},
\quad
Y \in \mathfrak{a}_{\mathfrak{g}}.
\end{eqnarray*}
and where
$\mathfrak{g}_\alpha\subset \mathfrak{g}$
are the (restricted) root spaces.

We also introduce the function on $\mathfrak{a}_{\mathfrak{g}}$
$$
\widetilde{D}_{\mathfrak{g}}(Y) :=\int_{\| Z\|\leq 1}D_{\mathfrak{g}}(Y+Z)dZ.
$$

We shall prove successively  the following equivalences
\begin{eqnarray*} 
&(i)& \text{ $\operatorname{vol}(gC\cap C)\in L^p(G)$, 
for any  compact $C\subset X$,}\\
\Longleftrightarrow&(ii)&
\text{$\operatorname{vol}(e^YC\cap C)^p\, D_{\mathfrak{g}}(Y)
\in L^1(\mathfrak{a}_{\mathfrak{g}})$, for any  compact $C\subset X$,}\\
\Longleftrightarrow&(iii)&
\text{$\operatorname{vol}(e^YC\cap C)^p\, \widetilde{D}_{\mathfrak{g}}(Y)
\in L^1(\mathfrak{a}_{\mathfrak{g}})$, for any  compact $C\subset X$,}\\
\Longleftrightarrow&(iv)&
\text{$\operatorname{vol}(e^YC\cap C)^p \, e^{\rho_{\mathfrak{g}}(Y)}
\in L^1(\mathfrak{a}_{\mathfrak{g}})$, for any  compact $C\subset X$,}\\
\Longleftrightarrow&(v)&
\text{$\operatorname{vol}(e^YC\cap C)^p\,  e^{\rho_{\mathfrak{g}}(Y)}
\in L^1(\mathfrak{a})$, for any  compact $C\subset X$,}\\
\Longleftrightarrow&(vi)&
\text{$e^{\rho_{\mathfrak{g}}(Y)-p\,\rho_{\mathfrak{q}}^{\min}(Y)}
\in L^1(\mathfrak{a})$,}\\
\Longleftrightarrow&(vii)&
\text{$p\,\rho_{\mathfrak{q}}^{\min}(Y)-\rho_{\mathfrak{g}}(Y)>0$, 
for any $Y\in \mathfrak{a}\smallsetminus 0$.}
\end{eqnarray*}
$(i)\Longleftrightarrow (ii)$ We may choose $C$ to be  $K$-invariant. 
We apply then
the integration formula  \eqref{eqn:haarg} to
the $K$-biinvariant function
$\operatorname{vol}(gC\cap C)$.\\
$(ii)\Longleftrightarrow (iii)$ We just replace $C$ by a larger compact $C':=e^{\mathfrak{a}_{\mathfrak{g}}(1)}C$
where $\mathfrak{a}_{\mathfrak{g}}(1)$ is the unit ball $\{ Z\in \mathfrak{a}_{\mathfrak{g}}: \| Z\|\leq 1\}$.\\
$(iii)\Longleftrightarrow (iv)$ We notice that 
we can find constants $a_1,a_2>0$, such that for any $Y\in \mathfrak{a}_{\mathfrak{g}}$,
one has 
\[
a_1 
\,e^{\rho_{\mathfrak{g}}(Y)}
\le \widetilde{D}_{\mathfrak{g}}(Y)\le  a_2 \,e^{\rho_{\mathfrak{g}}(Y)}.
\]
$(iv)\Longleftrightarrow (v)$ 
The main point of this equivalence is to replace an integration on  $\mathfrak{a}_\mathfrak{g}$ 
by an integration on $\mathfrak{a}$.
For that we will bound the support of the function $\varphi_C$ on
$\mathfrak{a}_{\mathfrak{g}}$, 
$\varphi_C(Y):= \operatorname{vol}(e^Y C \cap C)^p\, 
e^{\rho_{\mathfrak{g}}(Y)}$.
We may choose $C$ to be $K$-invariant so that, 
$\varphi_C$ is $W_G$-invariant.
We recall now the Cartan projection
\[
\mu\colon G \to \mathfrak{a}_{\mathfrak{g}} / W_G,
\ \ 
k_1 e^Y k_2 \mapsto Y \bmod W_G
\]
with respect to the Cartan decomposition
$G= K(\exp\mathfrak{a}_{\mathfrak{g}})K$.
It follows from  either \cite[Prop. 5.1]{Be} or \cite[Th. 1.1]{Ko} that, for any compact subsets
$S\subset G$, there exists $\delta>0$ such that
\begin{equation}\label{eqn:muSH}
\mu(S H S^{-1}) \subset
\mu(H) + \mathfrak{a}_{\mathfrak{g}}(\delta) \bmod W_G
\end{equation}
 where $\mathfrak{a}_{\mathfrak{g}}(\delta)$ stands for the $\delta$-ball 
$\{Y \in \mathfrak{a}_{\mathfrak{g}}: \|Y\| \le \delta\}$.  
If we take this compact set $S \subset G$ such that
$C \subset S H/H$,
then $Y\in\mathfrak{a}_{\mathfrak{g}}$ satisfies
$e^Y C\cap C\ne\emptyset$
only if
$e^Y\in S H S^{-1}$,
and therefore,
only if $Y\in\mu(SHS^{-1})$.
Hence  we get the bound on the support
\begin{equation}\label{eqn:proper}
   {\operatorname{Supp}} \, \varphi_C
  \subset 
  \bigcup_{w \in W_G} w ({\mathfrak {a}} + \mathfrak{a}_{\mathfrak{g}}(\delta)).
\end{equation} 
By $W_G$-invariance of $\varphi_C$, 
we only have to integrate on the $\delta$-neighborhood of $\mathfrak{a}$.
Hence the assertion $(iv)$ is equivalent to the following assertion 
$$
(iv')\ 
\operatorname{vol}(e^YC\cap C)^p \, 
e^{\rho_{\mathfrak{g}}(Y)}\in 
L^1( \mathfrak {a}+\mathfrak{a}_{\mathfrak{g}}(R))
\text{ for any compact $C\subset X$, $R>0$.}
$$
To see that this assertion $(iv')$ is equivalent to $(v)$, we just 
have, for both implications,  to replace the compact $C$ by a larger compact 
$C':=e^{\mathfrak{a}_{\mathfrak{g}}(R)}C$ and to notice that the map 
$\displaystyle Y\mapsto \max_{Z\in \mathfrak{a}_{\mathfrak{g}}(R)}
|\rho_{\mathfrak{g}}(Y+Z)-\rho_{\mathfrak{g}}(Y)|$ 
is uniformly bounded on $\mathfrak{a}$.
\\
$(v)\Longleftrightarrow (vi)$ 
We use Theorem \ref{thm:volX}, that we will prove in the next section, 
and that gives, for $C$ large enough,   constants 
$m, M>0$ such that
\[
m \, e^{-\rho_{\mathfrak{q}}^{\min}(Y)}
\le \operatorname{vol}(e^YC\cap C) 
\le M \, e^{-\rho_{\mathfrak{q}}^{\min}(Y)}
\quad\text{for any 
$Y\in \mathfrak{a}$}.
\]
$(vi)\Longleftrightarrow (vii)$ We recall that the function $\rho_{\mathfrak{g}}\! -\! p\rho_{\mathfrak{q}}^{\min}$ is 
continuous and piecewise linear.

This proves Proposition \ref{prop:LpX} provided the following Theorem \ref{thm:volX}.
\end{proof}

\subsection{Estimate of $\operatorname{vol}(e^Y C \cap C)$}
\label{subsec:volC}

Let $C$  be a compact subset of $X$.
We shall give both lower and upper bounds of the volume of
$e^Y C\cap C$
as $Y\in\mathfrak{a}$ goes to infinity.
For that we will use the function $\rho_\mathfrak{q}^{\min} $
defined by formula  \eqref{eqn:rhomin}.
Let $x_0 = e H \in X = G/H$ and $W_Gx_0$ be the orbit of this point 
under the Weyl group of $G$.

\begin{theorem}\label{thm:volX}
Let $G$ be an algebraic reductive Lie group,
$H$ an algebraic reductive subgroup and
$C$  a compact neighborhood of $Kx_0$ in $X:=G/H$.
Then there exist constants $m\equiv m_C>0$ and $M\equiv M_C>0$ such that
\[
me^{-\rho_{\mathfrak{q}}^{\min}(Y)}
\le \operatorname{vol}(e^Y C \cap C)
\le Me^{-\rho_{\mathfrak{q}}^{\min}(Y)}
\ \ \text{for any $Y\in\mathfrak{a}$}.
\]
\end{theorem}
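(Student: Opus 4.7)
The plan is to reduce the estimate to the linear volume bound of Proposition \ref{prop:volV}, applied to the $H$-module $\mathfrak{q}$, by locally linearizing the $H$-action near $x_0 = eH$ and exploiting the Weyl-group symmetry coming from the $K$-invariance of $C$. The key geometric input is the local diffeomorphism $\varphi : \mathfrak{q} \to X$, $X \mapsto \exp(X)\, x_0$, together with the identity
\[
h \cdot \varphi(X) = h \exp(X) h^{-1}\cdot x_0 = \varphi(\operatorname{Ad}(h) X)\qquad (h \in H),
\]
which converts the $H$-action on a neighborhood of $x_0$ into the linear $\operatorname{Ad}(H)$-action on $\mathfrak{q}$. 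In particular, for $Y \in \mathfrak{a} \subset \mathfrak{h}$, the action of $e^Y$ near $x_0$ is conjugate via $\varphi$ to $e^{\operatorname{ad} Y}$ on $\mathfrak{q}$, which scales the weight space $\mathfrak{q}_\lambda$ by $e^{\lambda(Y)}$. I would also arrange (by slightly enlarging $C$) that $C$ is $K$-invariant; then $k e^Y k^{-1} = e^{\operatorname{Ad}(k)Y}$ together with the $K$-invariance of $C$ and of the $G$-invariant measure yields
\[
\operatorname{vol}(e^Y C \cap C) = \operatorname{vol}(e^{wY} C \cap C) \qquad \text{for every $w \in W_G$,}
\]
so that $Y\mapsto \operatorname{vol}(e^Y C \cap C)$ is $W_G$-invariant on $\mathfrak{a}_{\mathfrak{g}}$.

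\textbf{Lower bound.} I would choose $w_0 \in W(Y; \mathfrak{a})$ realizing $\rho_{\mathfrak{q}}(w_0 Y) = \rho_{\mathfrak{q}}^{\min}(Y)$ and set $Y_0 := w_0 Y \in \mathfrak{a}$. By the Weyl symmetry above, it suffices to bound $\operatorname{vol}(e^{Y_0} C \cap C)$ from below. Choosing $C_0 := \varphi(B) \subset C$ with $B = \prod_\lambda B_\lambda$ a product of convex neighborhoods of $0$ in the $\mathfrak{a}$-weight spaces of $\mathfrak{q}$, and using that $e^{Y_0}$ acts on $C_0$ through $e^{\operatorname{ad}(Y_0)}$ on $\mathfrak{q}$ (with Jacobian of $\varphi$ bounded on $B$), Lemma \ref{lem:volV} applied to the $H$-module $\mathfrak{q}$ gives
\[
\operatorname{vol}(e^Y C \cap C) \ge \operatorname{vol}(e^{Y_0} C_0 \cap C_0) \ge m\, e^{-\rho_{\mathfrak{q}}(Y_0)} = m\, e^{-\rho_{\mathfrak{q}}^{\min}(Y)}.
\]

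\textbf{Upper bound and main obstacle.} I would cover $C$ by finitely many pieces $k_\alpha C_0$, $k_\alpha \in K$, with $C_0$ a fixed small neighborhood of $x_0$ as above, and bound
\[
\operatorname{vol}(e^Y C \cap C) \le \sum_{\alpha,\beta} \operatorname{vol}(g_{\alpha\beta} C_0 \cap C_0), \qquad g_{\alpha\beta} := k_\alpha^{-1} e^Y k_\beta.
\]
A summand is non-empty only if $g_{\alpha\beta}$ sends some point near $x_0$ back near $x_0$; this forces $g_{\alpha\beta} = \exp(X_2)\, h \exp(-X_1)$ with $X_1, X_2 \in \mathfrak{q}$ small and some $h \in H$. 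Combining this with the Cartan-projection estimate \eqref{eqn:muSH} of Benoist and Kobayashi, already invoked in the support bound \eqref{eqn:proper}, the element $h$ must have its $H$-Cartan projection close to some $wY$ with $w \in W(Y; \mathfrak{a})$. A local analysis via $\varphi$ and Lemma \ref{lem:volV} then bounds the corresponding contribution by a constant times $e^{-\rho_{\mathfrak{q}}(wY)}$, and summing over the finitely many pairs $(\alpha,\beta)$ and elements $w \in W(Y;\mathfrak{a})$ gives
\[
\operatorname{vol}(e^Y C \cap C) \le M \max_{w\in W(Y;\mathfrak{a})} e^{-\rho_{\mathfrak{q}}(wY)} = M\, e^{-\rho_{\mathfrak{q}}^{\min}(Y)}.
\]
The main obstacle will be this last step: organizing the Cartan-projection bookkeeping so that each non-empty piece is attributed to a single $w \in W(Y;\mathfrak{a})$, and verifying that pairs $(k_\alpha, k_\beta)$ not associated to any such $w$ contribute nothing in the large-$Y$ regime.
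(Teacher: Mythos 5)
Your lower bound argument matches the paper's (Lemma~\ref{lem:CC} via linearization plus Corollary~\ref{cor:CC} via the $W_G$-action on a $K$-invariant $C$), so that part is fine.

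For the upper bound you take a genuinely different route, and there you have a real gap, not merely unfinished bookkeeping. The decisive point in your sketch is the passage from ``$\mu(h)$ lies within $\delta$ of $Y$ modulo $W_G$'' to ``the $H$-Cartan projection $Y'\in\mathfrak a$ of $h$ is close to $wY$ for some $w\in W(Y;\mathfrak a)$.'' The Cartan-projection estimate \eqref{eqn:muSH} only produces some $w\in W_G$ with $\|Y'-wY\|\le\delta$; nothing forces $wY\in\mathfrak a$, i.e.\ nothing forces $w\in W(Y;\mathfrak a)$. One then needs to argue that, because $Y'\in\mathfrak a$ and $\|Y'-wY\|\le\delta$, the vector $Y$ must lie within a bounded distance of the linear subspace $\mathfrak a\cap w^{-1}\mathfrak a$, and from there produce some $w'\in W(Y;\mathfrak a)$ with $\rho_{\mathfrak q}(Y')\ge\rho_{\mathfrak q}^{\min}(Y)-O(1)$. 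But projecting $Y$ onto $\mathfrak a\cap w^{-1}\mathfrak a$ can change the set $W(\,\cdot\,;\mathfrak a)$ (it is only upper semicontinuous in this sense), and $\rho_{\mathfrak q}^{\min}$ can drop on the lower-dimensional strata where that set enlarges. So the chain of inequalities you need does not go through without a careful stratification of $\mathfrak a$ by the subspaces $\mathfrak a\cap w^{-1}\mathfrak a$ and the weight hyperplanes of $\mathfrak q$, together with uniform control on each stratum. Stating that ``pairs not associated to any $w\in W(Y;\mathfrak a)$ contribute nothing for large $Y$'' is exactly what has to be proved, and it is where the content lies.

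The paper's proof of the upper bound is built precisely to sidestep this issue. Rather than covering $C$ by finitely many $K$-translates of a small ball and tracking an approximate Cartan projection, the paper works with the Chevalley embedding $X\hookrightarrow V$ and the stratification of $\mathfrak a$ by \emph{facets} $F$ of the weight arrangement of $d\tau|_{\mathfrak a}$. Near a point $x\in X^{\mathfrak b}$ fixed by a subspace $\mathfrak b\subset\mathfrak a$, the stabilizer argument (Lemma~\ref{lem:volC2}) produces an \emph{exact} $W_G$-conjugacy $Z=wY$ with both $Z,wY\in\mathfrak a$, hence $w\in W(Y;\mathfrak a)$, so $\rho_{\mathfrak q'}(Y)=\rho_{\mathfrak q}(wY)\ge\rho_{\mathfrak q}^{\min}(Y)$ holds on the nose (no $\delta$-fudge). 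This is pushed from one fixed point to two (Lemma~\ref{lem:volC3}), then to compact subsets of $X^{\mathfrak b}$ by a compactness argument (Lemma~\ref{lem:volC4}), then to compacts in $X\cap(V_F^0\oplus V_F^\mp)$ by translating along a facet direction $Y_0\in F$ (Lemma~\ref{lem:volC5}). A separation lemma (Lemma~\ref{lem:volC6}) shows that the remaining pieces of $C_1,C_2$ eventually miss each other as $Y\to\infty$ inside $F$. Finally (Lemma~\ref{lem:volC7} and Lemma~\ref{lem:afyr}) the translated $R$-neighborhoods of the facets cover $\mathfrak a$, giving the global bound. If you want to pursue your covering-plus-Cartan-projection route, you would still end up needing a facet-by-facet decomposition of $\mathfrak a$ and a separation statement of the type of Lemma~\ref{lem:volC6} to control which $w$'s can actually occur; at that point you have essentially reproduced the paper's argument with extra overhead.
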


The proof of the lower bound will be given in Section \ref{sec:CC}.

We will give the proof of the upper bound in eight steps which 
will last from Section \ref{sec:CC} to \ref{sec:ubv}.
Clearly, the upper bound in Theorem \ref{thm:volX}  is equivalent to the following
statement: \\
{\it For any compact sets $C_1$, $C_2$ in $X$,
there exists
$M \equiv M_{C_1,C_2} > 0$
such that
\begin{equation}\label{eqn:vol2}
\operatorname{vol}(e^Y C_1 \cap C_2)
\le Me^{-\rho_{\mathfrak{q}}^{\min}(Y)}
\quad\text{for any $Y\in\mathfrak{a}$}.
\end{equation}
}

The strategy of the proof of \eqref{eqn:vol2} will be to 
see $G/H$ as  a closed orbit 
in a representation of $G$ and to
decompose $C_1$ and $C_2$ into smaller compact pieces.

\subsection{Lower bound for  $\operatorname{vol}(e^Y C \cap C)$}
\label{sec:CC}

Up to the end of this chapter we keep the setting as above~: 
$G$ is a connected algebraic reductive Lie group,
and $H$ an algebraic reductive subgroup. 

By Chevalley theorem (\cite[Th. 5.1]{Bo69}
or \cite[Section 4.2]{Che}), there exists an algebraic representation
$\tau\colon G\to GL(V)$ such that the homogeneous space $X=G/H$ is realized
 as a closed orbit  $X=Gx_0\subset V$ where ${\rm Stab}_G(x_0)=H$.
We can assume that $\operatorname{Ker}(d\tau)=\{0\}$.
We fix such a representation $(\tau,V)$ once and for all.
\vspace{1em}

Here is the first step towards both the  volume upper bound 
and the volume lower bound in Theorem \ref{thm:volX}.

\begin{lemma}
\label{lem:CC}
There exists a neighborhood\/ $C_{x_0}$ 
of $x_0$ in $G/H$
such that for any compact neighborhood $C_0$ of $x_0$ contained
in $C_{x_0}$,
there exist constants $m,M>0$ such that 
\begin{equation}\label{eqn:CC}
m e^{-\rho_{\mathfrak{q}}(Y)} 
\le \operatorname{vol}(e^Y C_0 \cap C_0) \le M e^{-\rho_{\mathfrak{q}}(Y)}
\quad\text{for any $Y\in\mathfrak{a}$}.  
\end{equation}
\end{lemma}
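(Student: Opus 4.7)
The plan is to linearize the $H$-action on $G/H$ near the base point $x_0$ and reduce the estimate to the already-established linear one (Proposition \ref{prop:volV}) applied to the isotropy representation of $H$ on $\mathfrak{q}$.

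First I would construct an $H$-equivariant chart. Define
\[
\phi \colon U \longrightarrow G/H,\qquad \phi(X):=\exp(X)\cdot x_0,
\]
where $U$ is a sufficiently small open neighborhood of $0$ in $\mathfrak{q}$. Since $\mathfrak{g}=\mathfrak{h}\oplus\mathfrak{q}$ and $\operatorname{Stab}_G(x_0)=H$, the differential $d\phi_0\colon\mathfrak{q}\to T_{x_0}(G/H)$ is an isomorphism, so by the inverse function theorem $\phi$ is a diffeomorphism onto its image after shrinking $U$. For $h\in H$ one has $h\exp(X)h^{-1}=\exp(\operatorname{Ad}(h)X)$ and $h\cdot x_0=x_0$, hence
\[
h\cdot\phi(X)=\phi(\operatorname{Ad}(h)X)
\]
whenever both sides are defined. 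Let $C_{x_0}:=\phi(U)$; then for any compact neighborhood $C_0$ of $x_0$ contained in $C_{x_0}$, the preimage $B:=\phi^{-1}(C_0)$ is a compact neighborhood of $0$ in $\mathfrak{q}$, and $H$-equivariance gives
\[
e^Y C_0 \cap C_0 \;=\; \phi\!\bigl(\operatorname{Ad}(e^Y)B \cap B\bigr)\qquad\text{for every }Y\in\mathfrak{a}.
\]

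Next I would compare measures. Because $G/H$ carries a $G$-invariant Radon measure, the modular character of $H$ acting on $\mathfrak{g}/\mathfrak{h}\simeq\mathfrak{q}$ is trivial, i.e.\ $\operatorname{Ad}|_{\mathfrak{q}}\colon H\to SL_\pm(\mathfrak{q})$ is a volume-preserving algebraic representation, to which Proposition \ref{prop:volV} applies. Moreover, the pullback of the $G$-invariant volume on $G/H$ by $\phi$ is absolutely continuous with respect to Lebesgue measure $dX$ on $\mathfrak{q}$ with continuous density $J(X)$, and $J(0)>0$; hence on the compact set $B$ there are constants $c_1,c_2>0$ with $c_1\,dX\le \phi^{*}\operatorname{vol}_{G/H}\le c_2\,dX$. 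Combining,
\[
c_1\operatorname{vol}_{\mathfrak{q}}(\operatorname{Ad}(e^Y)B\cap B)
\;\le\;\operatorname{vol}_{G/H}(e^Y C_0\cap C_0)
\;\le\;c_2\operatorname{vol}_{\mathfrak{q}}(\operatorname{Ad}(e^Y)B\cap B).
\]

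Finally I would invoke Proposition \ref{prop:volV} applied to the representation $(\operatorname{Ad}|_{\mathfrak{q}},\mathfrak{q})$ of $H$, with compact neighborhood $B$ of $0$, to obtain constants $m',M'>0$ such that
\[
m' e^{-\rho_{\mathfrak{q}}(Y)}\;\le\;\operatorname{vol}_{\mathfrak{q}}(\operatorname{Ad}(e^Y)B\cap B)\;\le\; M' e^{-\rho_{\mathfrak{q}}(Y)}\qquad\text{for all }Y\in\mathfrak{a},
\]
noting that the function $\rho_{\mathfrak{q}}$ of Section \ref{subsec:rhoV} coincides tautologically with the one produced by Proposition \ref{prop:volV} for this $H$-module. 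Setting $m:=c_1 m'$ and $M:=c_2 M'$ yields \eqref{eqn:CC}.

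There is no real obstacle beyond bookkeeping: the entire content is that the action of $H$ on a small $H$-invariant neighborhood of $x_0\in G/H$ is, up to a bi-Lipschitz change of coordinates with bounded Jacobian, exactly the linear action of $H$ on $\mathfrak{q}$. The only points requiring a small amount of care are (a) choosing $C_{x_0}$ small enough that $\phi^{-1}$ remains a diffeomorphism on a neighborhood of $\overline{C_0}$, so that the identity $e^Y C_0\cap C_0=\phi(\operatorname{Ad}(e^Y)B\cap B)$ really holds (and we are not losing a part of the intersection that exits the chart), and (b) verifying that $\operatorname{Ad}|_{\mathfrak{q}}$ is volume-preserving, which follows from the existence of the $G$-invariant measure on $G/H$.
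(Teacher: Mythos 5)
The lower-bound half of your argument is fine and matches the paper's: the map $\phi\colon Z\mapsto e^Z x_0$ is $H$-equivariant, so $\phi(\operatorname{Ad}(e^Y)B\cap B)\subset e^Y C_0\cap C_0$, and this inclusion together with the lower bound of Proposition \ref{prop:volV} and the bounded Jacobian gives $\operatorname{vol}(e^Y C_0\cap C_0)\ge m\,e^{-\rho_{\mathfrak q}(Y)}$.

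The gap is in the \emph{upper} bound, and it is exactly the point you flag as care item (a) but then dismiss. The equality $e^Y C_0\cap C_0=\phi(\operatorname{Ad}(e^Y)B\cap B)$ does not follow from $H$-equivariance; only the inclusion $\supset$ does. To get $\subset$, take $x\in e^Y C_0\cap C_0$ and write $x=\phi(Z_2)$, $e^{-Y}x=\phi(Z_1)$ with $Z_1,Z_2\in B$. Equivariance of the globally defined (but not globally injective) map $\widetilde\phi\colon\mathfrak q\to G/H,\ Z\mapsto e^Z x_0$ gives $\widetilde\phi(\operatorname{Ad}(e^{-Y})Z_2)=\phi(Z_1)$; but $\operatorname{Ad}(e^{-Y})Z_2$ can lie far outside the chart $U$ (the linear map $\operatorname{Ad}(e^{-Y})$ is unbounded as $Y$ ranges over $\mathfrak a$), and there you have no injectivity, so you cannot conclude $\operatorname{Ad}(e^{-Y})Z_2=Z_1\in B$. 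Shrinking $C_{x_0}$ does not help, because the problematic points are at the far end of the $\operatorname{Ad}(e^{\mathfrak a})$-orbit, not near $x_0$. Thus the crucial upper inclusion $\operatorname{vol}(e^Y C_0\cap C_0)\le c_2\operatorname{vol}_{\mathfrak q}(\operatorname{Ad}(e^Y)B\cap B)$ is unsupported.

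The paper gets around this by using a \emph{second, different} chart for the upper bound: realize $X=G/H$ as a closed orbit in a $G$-module $V$, choose an $H$-invariant complement $\mathfrak s$ of $\mathfrak q\cong T_{x_0}X$ in $V$, and let $\pi_+\colon X\to\mathfrak q$ be the restriction to $X$ of the linear projection along $\mathfrak s$ (shifted by $p(x_0)$). This $\pi_+$ is defined on all of $X$ and is $H$-equivariant everywhere, so the set-theoretic inclusion $\pi_+(e^Y C_0\cap C_0)\subset\pi_+(e^Y C_0)\cap\pi_+(C_0)=e^Y\pi_+(C_0)\cap\pi_+(C_0)$ holds unconditionally, and local injectivity of $\pi_+$ on $C_{x_0}\supset e^Y C_0\cap C_0$ then yields the upper volume bound. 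You should replace your upper-bound step by this projection argument (or supply a genuine proof that $\widetilde\phi$ is injective on $B\cup\operatorname{Ad}(e^{\mathfrak a})B$, which is not a formal consequence of equivariance).
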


\begin{proof}
Since $G$ and $H$ are reductive, the representation of $H$ in $\mathfrak{q}$ is 
volume preserving. Hence
we can apply Proposition \ref{prop:volV} to the representation of $H$ in $\mathfrak{q}$.
Roughly, the strategy  is then to linearize $X$ near $x_0$. 
To make this approach precise, 
we need two similar but slightly different arguments for the lower bound and for the upper bound.
\vspace{1em}

{\it Lower bound}. 
We choose a sufficiently small compact neighborhood $U_0$ of $0$ in $\mathfrak{q}$ 
on which the map 
$$
\pi_-\colon \mathfrak{q}\rightarrow X\; ,\; Z\mapsto  e^Z x_0
$$ 
is well-defined, injective with a Jacobian bounded away from $0$.
Since $x_0$ is $H$-invariant, this map $\pi_-$ is $H$-equivariant.
For any compact neighborhood $C_0=\pi_-(C)$ of $x_0$ in $X$ with $C\subset U_0$,
one has, for every $Y\in \mathfrak{a}$,
\[
e^YC_0\cap C_0 \supset \pi_-(e^Y C\cap  C).
\]
The lower bound  in \eqref{eqn:CC} is then a consequence of 
the lower bound in Proposition \ref{prop:volV}.

\vspace{1em}

{\it Upper bound}. 
Since the linear tangent space $T_{x_0}X\subset V$ of $X$ at $x_0$
is canonically $H$-isomorphic to $\mathfrak{q}$,
we will also denote it by $\mathfrak{q}$.
Since $H$ is reductive, this vector subspace $\mathfrak{q}\subset V$
admits a  $H$-invariant supplementary subspace $\mathfrak{s}$.
We set $p\colon V\rightarrow \mathfrak{q}$ for the  linear projector with kernel $\mathfrak{s}$.
We choose a sufficiently small compact neighborhood $C_{x_0}$ of $x_0$ in $X$ 
on which the map 
$$
\pi_+ \colon X\rightarrow \mathfrak{q} \; ,\; x\mapsto p(x)-p(x_0)
$$
is injective with a Jacobian bounded away from $0$.
Since $x_0$ is $H$-invariant, this map $\pi_+$ is also $H$-equivariant.

For any compact subset $C_0$ of $C_{x_0}$, one has, for every $Y\in \mathfrak{a}$,
\[
\pi_+(e^YC_0\cap C_0) \subset e^Y C\cap  C
\]
where $C:=\pi_+(C_0)$.
The upper bound in  \eqref{eqn:CC} is then a consequence of 
the upper bound  in Proposition \ref{prop:volV}.
\end{proof}

As a direct corollary we get the lower bound in Theorem \ref{thm:volX}~.

\begin{cor}
\label{cor:CC}
For any compact neighborhood\/ $C$ of\/ $Kx_0$ in $G/H$,
there exists $m>0$  such that 
$$
\operatorname{vol}(e^Y C \cap C) \ge m e^{-\rho^{\min}_{\mathfrak{q}}(Y)}
 \quad\text{for any $Y\in\mathfrak{a}$}.  
$$
\end{cor}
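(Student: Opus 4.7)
The plan is to bootstrap Lemma \ref{lem:CC} via Weyl group symmetry. The lemma already delivers a lower bound of the required shape, but with $\rho_{\mathfrak{q}}(Y)$ in place of $\rho_{\mathfrak{q}}^{\min}(Y)$, and only for a small neighborhood $C_0$ of $x_0$ (not an arbitrary neighborhood of the full orbit $Kx_0$). Both discrepancies are bridged by translating the lemma's bound at $wY$ back to a bound at $Y$ for each $w$ in $W(Y;\mathfrak{a})$, using the assumption that $C$ surrounds all of $Kx_0$.

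Concretely, I would first fix a set of representatives $\{k_w \in N_K(\mathfrak{a}_{\mathfrak{g}}) : w \in W_G\}$. Since $C$ is a neighborhood of $Kx_0$ and the finite set $\{k_w^{-1}x_0\}$ lies in $Kx_0\subset C$, one can choose a compact neighborhood $C_0$ of $x_0$, contained in the set $C_{x_0}$ produced by Lemma \ref{lem:CC}, and small enough that $k_w^{-1} C_0 \subset C$ for every $w \in W_G$. Then Lemma \ref{lem:CC} supplies a constant $m>0$ with
$$
\operatorname{vol}(e^Z C_0 \cap C_0) \ge m\, e^{-\rho_{\mathfrak{q}}(Z)} \quad\text{for all } Z \in \mathfrak{a}.
$$

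Next, for $Y \in \mathfrak{a}$ and $w \in W(Y;\mathfrak{a})$, the element $wY$ lies in $\mathfrak{a}$ and the standard identity $e^{wY}=k_w e^Y k_w^{-1}$ holds in $G$. Applying the lemma at $Z = wY$ and translating by $k_w^{-1}$, which preserves the $G$-invariant volume on $G/H$, gives
$$
m\, e^{-\rho_{\mathfrak{q}}(wY)} \le \operatorname{vol}(e^{wY}C_0\cap C_0) = \operatorname{vol}\bigl(e^Y(k_w^{-1}C_0)\cap (k_w^{-1}C_0)\bigr) \le \operatorname{vol}(e^YC\cap C),
$$
where the final inequality uses $k_w^{-1}C_0\subset C$. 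Taking the minimum over $w \in W(Y;\mathfrak{a})$, which is precisely the operation defining $\rho_{\mathfrak{q}}^{\min}$ in \eqref{eqn:rhomin}, yields the corollary.

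There is no serious obstacle here: everything reduces to Lemma \ref{lem:CC} combined with the $K$-invariance of the measure and the conjugation formula $k_w e^Y k_w^{-1} = e^{wY}$. The only bookkeeping point is to ensure that the single neighborhood $C_0$ can be simultaneously shrunk so that $k_w^{-1}C_0 \subset C$ for all $w \in W_G$, which is possible because $W_G$ is finite.
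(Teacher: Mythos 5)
Your proposal is correct and follows essentially the same route as the paper: reduce to Lemma \ref{lem:CC} at the point $x_0$, then for each $w\in W(Y;\mathfrak{a})$ use the relation $k_w e^Y k_w^{-1}=e^{wY}$ and the $K$-invariance of the measure to transport the bound from $wY$ back to $Y$, and finally optimize over $w$ to produce $\rho_{\mathfrak{q}}^{\min}$. The only cosmetic difference is that the paper shrinks $C$ to the form $KC_0$, whereas you keep $C$ fixed and shrink $C_0$ until $k_w^{-1}C_0\subset C$ for all $w\in W_G$ (possible by finiteness of $W_G$); the two choices are interchangeable. Your intermediate equality $\operatorname{vol}(e^{wY}C_0\cap C_0)=\operatorname{vol}\bigl(e^Y(k_w^{-1}C_0)\cap(k_w^{-1}C_0)\bigr)$ is in fact stated a bit more carefully than the corresponding display in the paper, which writes the second factor as $C_0$ rather than $k_w^{-1}C_0$.
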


\begin{proof} Shrinking $C$ if necessary,
we can assume that $C=K C_{0}$ where $C_{0}$ is a compact neighborhood of 
$x_0$. 
According to Lemma \ref{lem:CC}, there exists a constant $m>0$ 
such that the lower bound in \eqref{eqn:CC} is satisfied.
For each
$w \in W(Y; \mathfrak{a})$ ($\subset W_G$),
we take a representative
$k_w \in N_K (\mathfrak{a}_{\mathfrak{g}})$.
Then one has
\begin{equation*}
\operatorname{vol} (e^Y C \cap C)
\ge \operatorname{vol}(e^Y k_w^{-1} C_0 \cap C_0)
  = \operatorname{vol} (e^{wY} C_0 \cap C_0)
 \ge m e^{-\rho_{\mathfrak{q}}(wY)}.
\end{equation*}
Hence one has
\begin{eqnarray*}
\operatorname{vol}(e^Y C \cap C)
&\geq &
m \,\max_{w\in W(Y;\mathfrak{a})} e^{-\rho_{\mathfrak{q}}(wY)}
\;\; =\;\; m  \, e^{-\rho^{\min}_{\mathfrak{q}}(Y)}.
\end{eqnarray*}
This ends the proof.
\end{proof}

\subsection{Volume near one invariant point}
\label{sec:V0}

Here is the second step towards the  volume upper bound \eqref{eqn:vol2}.
It is a subtle variation of the volume upper bound given in  Lemma \ref{lem:CC}.

For any   subspace  $\mathfrak{b}\subset \mathfrak{a}$, we set
$X^{\mathfrak{b}}:=\{x\in X: e^{Y}x=x,\;\text{for all $Y\in\mathfrak{b}$}\}$.

\begin{lemma}
\label{lem:volC2} For any subspace  
$\mathfrak{b}\subset \mathfrak{a}$ and any
point $x \in X ^{\mathfrak{b}}$,
there exists a neighborhood $C_x$ of $x$ in $X$ and $M>0$ such that 
\[
\operatorname{vol}(e^Y C_x \cap C_x)
\le M \, e^{-\rho_{\mathfrak{q}}^{\min} (Y)}
 \quad\text{for any $Y\in\mathfrak{b}$}.
\]

\end{lemma}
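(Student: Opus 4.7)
The plan is to mimic the upper-bound argument of Lemma \ref{lem:CC}, but now centered at the $\exp(\mathfrak{b})$-fixed point $x$ rather than at $x_0$. Write $x=gx_0$ with $g\in G$, so that $H_x:=\operatorname{Stab}_G(x)=gHg^{-1}$ has Lie algebra $\mathfrak{h}_x=\operatorname{Ad}(g)\mathfrak{h}$ containing $\mathfrak{b}$. Since $H_x$ is reductive, we may choose an $H_x$-stable linear supplement $\mathfrak{s}_x$ of $T_xX\subset V$ and let $p\colon V\to T_xX$ be the associated projector. Setting $\pi_+^x(y):=p(y)-p(x)$ gives a map that is $H_x$-equivariant (because $p$ is and $x$ is $H_x$-fixed), and hence $\exp(\mathfrak{b})$-equivariant in a neighborhood of $x$. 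On a sufficiently small compact neighborhood $C_x$ of $x$, the map $\pi_+^x|_{C_x}$ is injective with Jacobian bounded away from $0$, and, exactly as in Lemma \ref{lem:CC}, one has the inclusion
\[
\pi_+^x(e^Y C_x\cap C_x)\subset e^Y C\cap C\qquad (Y\in\mathfrak{b}),
\]
where $C:=\pi_+^x(C_x)\subset T_xX$. Bounding $C$ inside a product of convex neighborhoods of $0$ in the $\mathfrak{b}$-weight spaces of $T_xX$ and applying Lemma \ref{lem:volV} to the linear action of $\mathfrak{b}$ on $T_xX$ then gives
\[
\operatorname{vol}(e^Y C_x\cap C_x)\le M'\,e^{-\rho_{T_xX}(Y)},
\]
where $\rho_{T_xX}(Y)$ denotes the sum of positive eigenvalues of $Y$ acting on $T_xX$.

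The crux of the argument is the inequality $\rho_{T_xX}(Y)\ge \rho_{\mathfrak{q}}^{\min}(Y)$ for every $Y\in\mathfrak{b}$. Since $T_xX\cong\mathfrak{g}/\mathfrak{h}_x$ as $H_x$-modules, transport via $\operatorname{Ad}(g^{-1})$ identifies the action of $Y$ on $T_xX$ with the action of $Y':=\operatorname{Ad}(g^{-1})Y\in\mathfrak{h}$ on $\mathfrak{g}/\mathfrak{h}=\mathfrak{q}$, so $\rho_{T_xX}(Y)=\rho_{\mathfrak{q}}(Y')$. Because $Y'$ is $\mathbb{R}$-diagonalizable in $\mathfrak{h}$ and all maximal split abelian subspaces of $\mathfrak{h}$ are $H$-conjugate, there exists $h\in H$ with $Y'':=\operatorname{Ad}(h)Y'\in\mathfrak{a}$, and $\rho_{\mathfrak{q}}(Y')=\rho_{\mathfrak{q}}(Y'')$ by $H$-invariance of $\rho_{\mathfrak{q}}$ (or directly, since $\mathfrak{q}$-spectra coincide). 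Both $Y$ and $Y''$ now lie in $\mathfrak{a}\subset\mathfrak{a}_{\mathfrak{g}}$ and are $G$-conjugate, hence $W_G$-conjugate: $Y''=wY$ for some $w\in W_G$ satisfying $wY\in\mathfrak{a}$, i.e.\ $w\in W(Y;\mathfrak{a})$. Consequently
\[
\rho_{T_xX}(Y)=\rho_{\mathfrak{q}}(wY)\ge \rho_{\mathfrak{q}}^{\min}(Y),
\]
and combining with the previous display proves the lemma.

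The main obstacle is this last step: we must check that the conjugation needed to move $\operatorname{Ad}(g^{-1})Y\in\mathfrak{h}$ back into the common Cartan subspace $\mathfrak{a}$ produces a representative in $W(Y;\mathfrak{a})$ rather than some $G$-conjugate of $Y$ that escapes $\mathfrak{a}$. This is precisely what forces the use of $\rho_{\mathfrak{q}}^{\min}$ (rather than $\rho_{\mathfrak{q}}$) in the statement and in the reformulation \eqref{eqn:pX2} of $p_{G/H}$, and it is this minimum over $W(Y;\mathfrak{a})$ that absorbs the ambiguity of the conjugation $g$ sending $x_0$ to $x$.
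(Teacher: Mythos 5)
Your proof is correct and follows essentially the same route as the paper: realize $\operatorname{Stab}_G(x)$ as $gHg^{-1}$, linearize at the fixed point $x$, reduce to the volume estimate for the linear action on $T_xX\cong\mathfrak q$, and invoke the fact that two $G$-conjugate elements of $\mathfrak a_\mathfrak g$ are $W_G$-conjugate to land in $W(Y;\mathfrak a)$ and absorb the ambiguity via $\rho_{\mathfrak q}^{\min}$. The only cosmetic difference is that the paper first normalizes $g$ so that $\operatorname{Ad}(g)\mathfrak a$ contains $\mathfrak b$ (so $\operatorname{Ad}(g^{-1})Y$ lands in $\mathfrak a$ directly) and then quotes Lemma~\ref{lem:CC} for $G/H'$, whereas you redo the linearization in place and postpone the $H$-conjugation of $\operatorname{Ad}(g^{-1})Y$ into $\mathfrak a$; these are equivalent.
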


\begin{proof}[Proof of Lemma \ref{lem:volC2}]
Let $H'$ be the stabilizer of $x$ in $G$ and $\mathfrak{h}'$ its Lie algebra.
Since $x$ is in $ X^{\mathfrak{b}}$,
one has ${\mathfrak{b}}\subset \mathfrak{h}'$. 
Hence there exists  a maximal split abelian subspace $\mathfrak{a}' $ of $\mathfrak{h}'$ 
containing ${\mathfrak{b}}$.
Since all the maximal split abelian subspaces of $\mathfrak{h}$ are $H$-conjugate,
one can find an element $g \in G$ such that 
$x = g x_0$. 
Then one has
$
H' := gHg^{-1}
$ and
$\mathfrak{h}' := \operatorname{Ad}(g)\mathfrak{h}$.
After replacing $g$ by a suitable element $gh$ with $h$ in $H$, we  also have
$\mathfrak{a}' = \operatorname{Ad}(g)\mathfrak{a}.
$
We set
$\mathfrak{q}' := \operatorname{Ad}(g)\mathfrak{q}$
and introduce the function
$\rho_{\mathfrak{q}'} \colon
 \mathfrak{a}' \to \mathbb{R}_{\ge0}$
associated to
the representation of $H'$ on
$ \mathfrak{q}'$ as in Section \ref{subsec:rhoV}.
By definition,
we have the following identity:
\begin{equation}\label{eqn:rhoqq}
\rho_{\mathfrak{q}'}(\operatorname{Ad}(g)Z) = \rho_{\mathfrak{q}}(Z)
\quad\text{for any $Z \in \mathfrak{a}$}.
\end{equation}

Applying Lemma \ref{lem:CC}  to the homogeneous space $G/H'$,
we see that there exist
a compact neighborhood $C_x$ of $x$ in $X$ and a constant
$ M > 0$ such that
\begin{equation}\label{eqn:volgprime}
 \operatorname{vol}(e^Y C_x \cap C_x)
\le M e^{-\rho_{\mathfrak{q}'}(Y)}
\ \text{for any $Y\in\mathfrak{a}'$}.
\end{equation}

Now, for  $Y \in {\mathfrak{b}}$, we set $Z = \operatorname{Ad}(g^{-1})Y$.
This element $Z$ belongs also to $\mathfrak{a}$.
Since the Cartan subspace $\mathfrak{a}_{\mathfrak{g}}$
contains $\mathfrak{a}$ and since two elements of $\mathfrak{a}_{\mathfrak{g}}$ 
which are $G$-conjugate are  always $W_G$-conjugate, 
there exists $w\in W_G$ such that $Z=wY$.
Using 
\eqref{eqn:rhoqq},
we get
$$
\rho_{\mathfrak{q}'} (Y)
 = \rho_{\mathfrak{q}} (Z)
=\rho_{\mathfrak{q}} (wY)
\geq \rho_{\mathfrak{q}}^{\min}(Y).
$$
Hence, Lemma \ref{lem:volC2} follows from \eqref{eqn:volgprime}.
\end{proof}

\subsection{Volume near two invariant points}

Here is the third step towards the  volume upper bound \eqref{eqn:vol2}.

\begin{lemma}\label{lem:volC3}
For any  vector subspace  $\mathfrak{b}\subset\mathfrak{a}$  and any points
$x_1$, $x_2$ in $X^\mathfrak{b}$, 
there exist compact neighborhoods  $C_1$ of $x_1$
and $C_2$ of $x_2$ in $X$, and $ M>0$
such that
\begin{eqnarray}\label{eqn:volC3}
\operatorname{vol}(e^Y C_1 \cap C_2) \le M\, e^{-\rho_{\mathfrak{q}}^{\min}(Y)}
\quad\text{for any $Y \in \mathfrak{b}$}.
\end{eqnarray}
\end{lemma}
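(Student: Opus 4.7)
The plan is to mimic the upper-bound half of the proof of Lemma \ref{lem:CC}, but linearizing the problem at $x_2$ rather than at $x_0$. Pick $g_2\in G$ with $x_2=g_2 x_0$, set $H_2:=g_2 H g_2^{-1}$, $\mathfrak{h}_2:=\operatorname{Ad}(g_2)\mathfrak{h}$, $\mathfrak{q}_2:=\operatorname{Ad}(g_2)\mathfrak{q}$; the hypothesis $x_2\in X^{\mathfrak{b}}$ gives $\mathfrak{b}\subset\mathfrak{h}_2$. Identify $\mathfrak{q}_2$ with $T_{x_2}X\subset V$ and choose an $H_2$-invariant complement $\mathfrak{s}_2$ of $\mathfrak{q}_2$ in $V$ (possible since $H_2$ is reductive). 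Let $p_2:V\to\mathfrak{q}_2$ be the projection along $\mathfrak{s}_2$ and set $\pi_2(x):=p_2(x)-p_2(x_2)$. Exactly as in Lemma \ref{lem:CC}, $p_2(x_2)$ is $H_2$-fixed, $\pi_2$ is $H_2$-equivariant (so commutes with every $e^Y$, $Y\in\mathfrak{b}$), and $\pi_2$ restricts to a diffeomorphism with Jacobian bounded below on some compact neighborhood $C_{x_2}$ of $x_2$. We shrink $C_2$ to lie inside $C_{x_2}$.

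The equivariance of $\pi_2$ immediately yields
\[
\pi_2(e^Y C_1\cap C_2)\ \subset\ e^Y\pi_2(C_1)\cap \pi_2(C_2)
\quad\text{for every }Y\in\mathfrak{b}.
\]
Set $v_0:=\pi_2(x_1)=p_2(x_1)-p_2(x_2)\in\mathfrak{q}_2$. Since $x_1,x_2\in X^{\mathfrak{b}}$ and $\pi_2$ is $\mathfrak{b}$-equivariant, $v_0$ is fixed by $\mathfrak{b}$, so it lies in the zero-weight component of the $\mathfrak{b}$-weight decomposition $\mathfrak{q}_2=\bigoplus_\mu(\mathfrak{q}_2)_\mu$. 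Choose compact convex neighborhoods $B_\mu$ of $0$ in each $(\mathfrak{q}_2)_\mu$, with $B_0$ large enough to contain $v_0$ together with a small neighborhood of it, and form the box $B:=\prod_\mu B_\mu$. For $C_1$, $C_2$ sufficiently small, $\pi_2(C_1)\cup \pi_2(C_2)\subset B$. Running the proof of Lemma \ref{lem:volV} with the $\mathfrak{b}$-weight decomposition in place of the $\mathfrak{a}$-weight decomposition gives
\[
\operatorname{vol}(e^Y B\cap B)=\operatorname{vol}(B)\,e^{-\rho_{\mathfrak{q}_2}(Y)}
\quad\text{for every }Y\in\mathfrak{b}.
\]

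Combining the two displays with the Jacobian bound on $C_2$ yields $\operatorname{vol}(e^Y C_1\cap C_2)\le M'\,e^{-\rho_{\mathfrak{q}_2}(Y)}$. It remains to show $\rho_{\mathfrak{q}_2}(Y)\ge\rho_{\mathfrak{q}}^{\min}(Y)$ for $Y\in\mathfrak{b}$, which is the very argument that concludes Lemma \ref{lem:volC2}: set $Y':=\operatorname{Ad}(g_2^{-1})Y\in\mathfrak{h}$, conjugate $Y'$ into $\mathfrak{a}$ by some $h\in H$, and use that $G$-conjugate elements of $\mathfrak{a}_{\mathfrak{g}}$ are $W_G$-conjugate to find $w\in W(Y;\mathfrak{a})$ with $\operatorname{Ad}(hg_2^{-1})Y=wY$; then $\rho_{\mathfrak{q}_2}(Y)=\rho_{\mathfrak{q}}(Y')=\rho_{\mathfrak{q}}(wY)\ge \rho_{\mathfrak{q}}^{\min}(Y)$, giving \eqref{eqn:volC3}. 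The delicate point — and the reason the exponent is not weakened — is that $v_0$ lives in the zero-$\mathfrak{b}$-weight summand, so enlarging only $B_0$ to capture $v_0$ absorbs the discrepancy between $x_1$ and $x_2$ into the multiplicative constant $\operatorname{vol}(B)$ rather than into the exponent.
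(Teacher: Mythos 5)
Your proof is correct, but it takes a genuinely different route from the paper's, which is much shorter. The paper simply splits into cases: if $x_1=x_2$, the statement is Lemma~\ref{lem:volC2} verbatim; if $x_1\ne x_2$, then (since both points lie in $X^{\mathfrak{b}}\subset V^{\mathfrak{b}}$ and $\pi^{\mathfrak{b}}$ commutes with $e^Y$ for $Y\in\mathfrak{b}$ and fixes $V^{\mathfrak{b}}$ pointwise) one picks $C_1,C_2$ with $\pi^{\mathfrak{b}}(C_1)\cap\pi^{\mathfrak{b}}(C_2)=\emptyset$, which forces $e^YC_1\cap C_2=\emptyset$ for \emph{every} $Y\in\mathfrak{b}$, so there is nothing to estimate. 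You instead give a single unified argument: linearize at $x_2$ via the $H_2$-equivariant chart $\pi_2$, observe that $v_0=\pi_2(x_1)$ lands in the zero-$\mathfrak{b}$-weight summand of $\mathfrak{q}_2$, and then enlarge only the $B_0$ factor of the box so that both $\pi_2(C_1)$ and $\pi_2(C_2)$ fit inside; the box computation (Lemma~\ref{lem:volV} run with the $\mathfrak{b}$-weight decomposition) then gives the exponent $\rho_{\mathfrak{q}_2}(Y)$, and the conjugation argument from Lemma~\ref{lem:volC2} turns it into $\rho_{\mathfrak{q}}^{\min}(Y)$. Both proofs rest on the same observation --- that a $\mathfrak{b}$-invariant quantity attached to $x_1$ cannot be moved by $e^Y$, $Y\in\mathfrak{b}$ --- but the paper uses it to make the intersection empty outright, while you use it to absorb the offset $v_0$ into the multiplicative constant $\operatorname{vol}(B)$. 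Your version avoids the case split and makes transparent why the exponent is not degraded when $x_1\ne x_2$; the paper's version avoids the linearization and box machinery entirely in the $x_1\ne x_2$ case. Both are sound.
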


We set 
\begin{eqnarray}\label{eqn:vbvvbv}
V^\mathfrak{b}:=\{ v\in V: \mathfrak{b}v=0\}
\end{eqnarray}
so that 
$X^\mathfrak{b}=X\cap V^\mathfrak{b}$ 
and we set $\pi^\mathfrak{b}\colon V\rightarrow V^\mathfrak{b}$ to be the 
$\mathfrak{b}$-equivariant projection.

\begin{proof}
When $x_1=x_2$ this is Lemma \ref{lem:volC2}. 
When $x_1\neq x_2$,
we choose $C_1$ and $C_2$ with 
disjoint projections $\pi^\mathfrak{b}(C_1)
\cap\pi^\mathfrak{b}(C_2)=\emptyset$ so that, 
for any $Y$ in $\mathfrak{b}$, the intersection
$e^Y C_1 \cap C_2$ is also empty.
\end{proof}

Here is the fourth step towards the  volume upper bound \eqref{eqn:vol2}.

\begin{lemma}\label{lem:volC4}
For any  vector subspace  $\mathfrak{b}\subset\mathfrak{a}$  and any compact subsets
$S_1$, $S_2$ included in $X^\mathfrak{b}$, there exist $M>0$ and 
compact neighborhoods  
$C_{S_1}$ of $S_1$
and $C_{S_2}$ of $S_2$ in $X$
such that
\begin{eqnarray}\label{eqn:volC4}
\operatorname{vol}(e^Y C_{S_1} \cap C_{S_2}) \le 
M\, e^{-\rho_{\mathfrak{q}}^{\min}(Y)}
\quad\text{for any $Y \in \mathfrak{b}$}.
\end{eqnarray}
\end{lemma}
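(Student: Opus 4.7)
The plan is a two-step compactness argument that upgrades the pointwise bound of Lemma \ref{lem:volC3} to a bound over compact sets. Throughout we use that Lemma \ref{lem:volC3} yields \emph{compact} neighborhoods $C_1, C_2$ (in particular with non-empty interiors containing the chosen points) and that shrinking such neighborhoods only decreases $\operatorname{vol}(e^Y C_1 \cap C_2)$, so the bound is preserved under shrinking.

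First step (fix a point in $S_1$, sweep $S_2$). Fix $x_1 \in S_1$. For every $x_2 \in S_2$, Lemma \ref{lem:volC3} supplies compact neighborhoods $C^{(1)}_{x_1,x_2}$ of $x_1$ and $C^{(2)}_{x_1,x_2}$ of $x_2$ and a constant $M_{x_1,x_2} > 0$ with the bound \eqref{eqn:volC3}. By compactness of $S_2$, finitely many open interiors $\operatorname{int} C^{(2)}_{x_1, x_2^{(j)}}$, $j = 1, \dots, m$, already cover $S_2$. Set
\[
C^{(1)}_{x_1} := \bigcap_{j=1}^m C^{(1)}_{x_1, x_2^{(j)}},
\qquad
C^{(2)}_{x_1, S_2} := \bigcup_{j=1}^m C^{(2)}_{x_1, x_2^{(j)}}.
\]
The first is a compact neighborhood of $x_1$ (finite intersection), the second a compact neighborhood of $S_2$. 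The inclusion
$e^Y C^{(1)}_{x_1} \cap C^{(2)}_{x_1, S_2} \subset \bigcup_j \bigl( e^Y C^{(1)}_{x_1, x_2^{(j)}} \cap C^{(2)}_{x_1, x_2^{(j)}} \bigr)$ and subadditivity of volume give
\[
\operatorname{vol}\bigl(e^Y C^{(1)}_{x_1} \cap C^{(2)}_{x_1, S_2}\bigr) \le M_{x_1}\, e^{-\rho_{\mathfrak{q}}^{\min}(Y)}
\quad \text{for any } Y \in \mathfrak{b},
\]
with $M_{x_1} := \sum_j M_{x_1, x_2^{(j)}}$.

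Second step (sweep $S_1$). By compactness of $S_1$, finitely many open interiors $\operatorname{int} C^{(1)}_{x_1^{(i)}}$, $i = 1, \dots, n$, cover $S_1$. Set
\[
C_{S_1} := \bigcup_{i=1}^n C^{(1)}_{x_1^{(i)}},
\qquad
C_{S_2} := \bigcap_{i=1}^n C^{(2)}_{x_1^{(i)}, S_2}.
\]
Then $C_{S_1}$ is a compact neighborhood of $S_1$ and $C_{S_2}$ is a compact neighborhood of $S_2$ (the interior of a finite intersection of compact neighborhoods of $S_2$ still contains $S_2$). Since $C_{S_2} \subset C^{(2)}_{x_1^{(i)}, S_2}$ for every $i$, the same subadditivity argument yields
\[
\operatorname{vol}(e^Y C_{S_1} \cap C_{S_2}) \le \sum_{i=1}^n \operatorname{vol}\bigl(e^Y C^{(1)}_{x_1^{(i)}} \cap C^{(2)}_{x_1^{(i)}, S_2}\bigr) \le M\, e^{-\rho_{\mathfrak{q}}^{\min}(Y)}
\]
for any $Y \in \mathfrak{b}$, with $M := \sum_i M_{x_1^{(i)}}$, proving \eqref{eqn:volC4}.

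Possible obstacle. The only subtlety is ensuring that, after the first step, $C^{(1)}_{x_1}$ is still a neighborhood of $x_1$ and that, after the second step, $C_{S_2}$ is still a neighborhood of $S_2$: both hinge on the intersections being finite, which is guaranteed by the two applications of compactness. Nothing deeper than Lemma \ref{lem:volC3} and elementary point-set topology is needed.
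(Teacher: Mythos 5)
Your proof is correct and follows essentially the same two-step compactness argument as the paper: fix $x_1$, cover $S_2$ by finitely many of the $C_2$-neighborhoods and take the intersection of the matching $C_1$-neighborhoods, then repeat over a finite cover of $S_1$. The only (welcome) addition is that you spell out the subadditivity of the volume and the monotonicity under shrinking, which the paper leaves implicit in the phrase ``only finitely many constants $M$ are involved in this process.''
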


\begin{proof} This is a consequence of Lemma \ref{lem:volC3}
by a standard compactness argument.
Let $x_1\in S_1$. For any  $x_2\in S_2$, there exist compact neighborhoods 
$C_1(x_1,x_2)$ of $x_1$  and 
$C_2(x_1,x_2)$ of $x_2$ satisfying \eqref{eqn:volC3}.

First we fix $x_1$ in $S_1$. By compactness of $C_2$, one can find a finite set 
$F_2\equiv F_2(x_1)\subset S_2$ for which 
the union $C_2(x_1,S_2):=\cup_{x_2\in F_2}C_2(x_1,x_2)$
is a compact neighborhood of $S_2$. The intersection 
$C_1(x_1,S_2):=\cap_{x_2\in F_2}C_1(x_1,x_2)$ is still 
a compact neighborhood of $x_1$.

By compactness of $C_1$, one can find a finite set 
$F_1\subset S_1$ for which 
the union $C_{S_1}:=\cup_{x_1\in F_1}C_1(x_1,S_2)$
is a compact neighborhood of $S_1$. The intersection 
$C_{S_2}:=\cap_{x_1\in F_1}C_2(x_1,S_2)$ is still 
a compact neighborhood of $S_2$.

Since only finitely many constants $M$ are involved in this process,
the compact neighborhoods  
$C_{S_1}$
and $C_{S_2}$ 
satisfy \eqref{eqn:volC4}
\end{proof}

\subsection{Facets}
\label{sec:fac}

In this section,
we shall introduce a decomposition 
of $\mathfrak{a}$ in convex pieces
$F$ called facets by
using the representation
$d\tau|_{\mathfrak{a}}\colon \mathfrak{a}\to\operatorname{End}(V)$.

We need to introduce more notations.
Let $\Delta\equiv\Delta(V,\mathfrak{a})$ be the set of weights
of $\mathfrak{a}$ in $V$.
For $v$ in $V$ we write  
$\displaystyle v = \sum_{\lambda\in\Delta} v_\lambda$
according to the weight space decomposition 
$\displaystyle V=\bigoplus_{\lambda\in\Delta}V_\lambda$.
We fix a norm $\| \ \|$ on each weight space
$V_\lambda$,
and define a norm on $V$ by
\begin{equation}\label{eqn:Ve}
\|v\| := \max_{\lambda\in\Delta} \|v_\lambda\|.
\end{equation}

For any  subset  $F\subset\mathfrak{a}$, we set
\begin{eqnarray*}\label{eqn:def}
\Delta^+_F &:=&
\{\lambda\in \Delta: 
\lambda(Y)>0\ \text{for any $Y \in F$} \} \\
\Delta^0_F &:=&
\{\lambda\in \Delta: 
\lambda(Y)=0\ \text{for any $Y \in F$} \} \\
\Delta^-_F &:=&
\{\lambda\in \Delta: 
\lambda(Y)<0\ \text{for any $Y \in F$} \} 
\end{eqnarray*}
We say that $F$ is a {\it{facet}} if 
\begin{equation*}
\label{eqn:Deltapm}
  \Delta = \Delta_F^+ \amalg \Delta_F^0 \amalg \Delta_F^- 
  \;\;\;\;\;\;{\rm and}
\end{equation*}
\begin{eqnarray*}\label{eqn:FYa}
F=\{Y \in {\mathfrak {a}}:
&&
\lambda(Y) >0\ \text{for any $\lambda \in \Delta_F^+$}, \\
\nonumber &&
\lambda(Y) =0\ \text{for any $\lambda \in \Delta_F^0$} ,\\
\nonumber &&
\lambda(Y) <0\ \text{for any $\lambda \in \Delta_F^-$} \}.
\end{eqnarray*}

Let ${\mathcal{F}}$ be the totality of facets.  
Then we have
$$
{\mathfrak {a}} = \bigsqcup_{F \in {\mathcal {F}}} F
\quad
\text{(disjoint union)}.  
$$

For any facet $F$ we denote by $\mathfrak{a}_F$ its support, 
i.e. its linear span:
\begin{eqnarray*}\label{eqn:aF}
\mathfrak{a}_F
:= \{ Y \in \mathfrak{a}:  \lambda(Y) = 0
      \ \ \text{for any $\lambda \in \Delta_F^0$} \}.
\end{eqnarray*}

We set
\[
V_F^\varepsilon := \bigoplus_{\lambda\in\Delta_F^\varepsilon} V_\lambda
\quad\text{for $\varepsilon = +,0,-$}.
\]
Notice that, using Notation \eqref{eqn:vbvvbv}, we have $V_F^0=V^{{\mathfrak a}_F}$.
We have a direct sum decomposition:
\begin{equation}\label{eqn:VF}
V = V_F^+ \oplus V_F^0 \oplus V_F^-.
\end{equation}

Here is the fifth step towards the  volume upper bound \eqref{eqn:vol2}.

\begin{lemma}\label{lem:volC5}
Let $F$ be a facet, $S_1$ be a compact subset
of $X\cap (V_F^0\oplus V_F^-)$,
and  $S_2$ be a compact subset
of $X\cap (V_F^0\oplus V_F^+)$.
Then there exist 
compact neighborhoods  
$C_{S_1}$ of $S_1$
and $C_{S_2}$ of $S_2$ in $X$, and $M>0$
such that
\begin{eqnarray}\label{eqn:volC5}
\operatorname{vol}(e^Y C_{S_1} \cap C_{S_2}) \le 
M\, e^{-\rho_{\mathfrak{q}}^{\min}(Y)}
\quad\text{for any $Y \in \mathfrak{a}_F$}.
\end{eqnarray}
\end{lemma}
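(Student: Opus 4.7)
The plan is to reduce Lemma \ref{lem:volC5} to Lemma \ref{lem:volC4} by flowing $S_1$ and $S_2$ into the fixed-point set $X^{\mathfrak{a}_F}$ via the action of $\exp(\pm F)$, and then transferring the resulting volume bound using the $G$-invariance of $\operatorname{vol}$ together with the Lipschitz property of $\rho_{\mathfrak{q}}^{\min}$. Let $\pi_0\colon V \to V_F^0$ denote the projection associated with the decomposition $V = V_F^+ \oplus V_F^0 \oplus V_F^-$.

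First I would check that $\pi_0(S_i)\subset X^{\mathfrak{a}_F}$. For $x\in X\cap (V_F^0\oplus V_F^-)$ and any $Y$ in the relative interior of $F$, every weight $\lambda\in \Delta_F^-$ satisfies $\lambda(Y)<0$, so $e^{tY}x \to \pi_0(x)$ in $V$ as $t\to+\infty$. Since $X$ is closed in $V$ (via the Chevalley embedding) and $V_F^0$ is pointwise fixed by $\exp(\mathfrak{a}_F)$, this forces $\pi_0(x)\in X\cap V_F^0 = X^{\mathfrak{a}_F}$; the symmetric argument (using $-Y\in -F$) handles $S_2$. Writing $S_i^0:=\pi_0(S_i)$, both are compact subsets of $X^{\mathfrak{a}_F}$, and Lemma \ref{lem:volC4} applied with $\mathfrak{b}=\mathfrak{a}_F$ produces compact neighborhoods $C_1', C_2'$ of $S_1^0, S_2^0$ in $X$ and a constant $M'>0$ with
$$\operatorname{vol}(e^{Y'} C_1' \cap C_2') \le M'\, e^{-\rho_{\mathfrak{q}}^{\min}(Y')}\quad\text{for every } Y' \in \mathfrak{a}_F.$$

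Next, using the uniform exponential contraction $\|e^{tY}x-\pi_0(x)\|\le e^{-ct}\|x-\pi_0(x)\|$ on the compact set $S_1$ (with $c:=\min_{\lambda\in\Delta_F^-}(-\lambda(Y))>0$), and the analogous contraction of $V_F^+$ under $e^{-tY}$, I would choose $Y_+,Y_-\in F$ large enough that $e^{Y_+}S_1\subset \operatorname{int}(C_1')$ and $e^{-Y_-}S_2\subset \operatorname{int}(C_2')$, and set
$$C_{S_1}:=e^{-Y_+}C_1',\qquad C_{S_2}:=e^{Y_-}C_2'.$$
These are compact neighborhoods of $S_1$ and $S_2$ in $X$. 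For any $Y\in\mathfrak{a}_F$, since $Y-Y_+-Y_-\in\mathfrak{a}_F$ and $\operatorname{vol}$ is $G$-invariant,
$$\operatorname{vol}(e^Y C_{S_1}\cap C_{S_2}) = \operatorname{vol}(e^{Y-Y_+-Y_-}C_1'\cap C_2') \le M'\, e^{-\rho_{\mathfrak{q}}^{\min}(Y-Y_+-Y_-)}.$$
Finally, since $\rho_{\mathfrak{q}}^{\min}$ is continuous and piecewise linear on $\mathfrak{a}$ with finitely many linear pieces, it is globally Lipschitz with some constant $L$, hence $\rho_{\mathfrak{q}}^{\min}(Y-Y_+-Y_-)\ge \rho_{\mathfrak{q}}^{\min}(Y)-L\|Y_++Y_-\|$, and inequality \eqref{eqn:volC5} follows with $M:=M'\,e^{L\|Y_++Y_-\|}$.

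The main obstacle is the shift step: Lemma \ref{lem:volC4} hands us specific neighborhoods $C_1', C_2'$ whose interiors we cannot prescribe in advance, yet we must arrange that $e^{Y_+}S_1$ and $e^{-Y_-}S_2$ land in those interiors. The uniform exponential contraction on the compact sets $S_1, S_2$ makes this possible once $\|Y_\pm\|$ is chosen sufficiently large, after which the Lipschitz absorption $e^{-\rho_{\mathfrak{q}}^{\min}(Y-Y_+-Y_-)} \le e^{L\|Y_++Y_-\|}\, e^{-\rho_{\mathfrak{q}}^{\min}(Y)}$ converts the fixed shift into the multiplicative constant $M$, giving a bound valid uniformly in $Y\in\mathfrak{a}_F$.
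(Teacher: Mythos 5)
Your proof is correct and follows essentially the same strategy as the paper's: project $S_1,S_2$ onto $V_F^0$ to land in $X^{\mathfrak{a}_F}$, invoke Lemma~\ref{lem:volC4} with $\mathfrak{b}=\mathfrak{a}_F$, pull the resulting neighborhoods back by a large element of $F$, and absorb the resulting shift using that $\rho_{\mathfrak{q}}^{\min}$ is piecewise linear. The only cosmetic difference is that you use two shift elements $Y_+,Y_-$ where the paper takes a single $Y_0\in F$ large enough to handle both inclusions simultaneously.
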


\begin{proof}
We recall that $\pi^{\mathfrak{a}_F}$ is the projection on 
$V_F^0=V^{{\mathfrak a}_F}$. 
Since $X$ is closed and is invariant under the group $e^{\mathfrak{a}_F}$,
one has the inclusions
\begin{eqnarray*}\label{eqn:piaf}
\pi^{\mathfrak{a}_F}( X\cap (V_F^0\oplus V_F^-))\subset X^{\mathfrak{a}_F}
\;\;\;{\rm and}\;\;\;
\pi^{\mathfrak{a}_F}( X\cap (V_F^0\oplus V_F^+))\subset X^{\mathfrak{a}_F}.
\end{eqnarray*}

Let $T_1:=\pi^{\mathfrak{a}_F}(S_1)$ and 
$T_2:=\pi^{\mathfrak{a}_F}(S_2)$ be the images of $S_1$ and $S_2$
by the projection $\pi^{\mathfrak{a}_F}$. Since 
\begin{eqnarray}\label{eqn:sxvf}
S_1\subset X\cap (V_F^0\oplus V_F^-)
\;\;\;{\rm and}\;\;\;
S_2\subset X\cap (V_F^0\oplus V_F^+),
\end{eqnarray}
these images $T_1$ and $T_2$ are compact subsets of $X^{\mathfrak{a}_F}$.
According to Lemma \ref{lem:volC4} with $\mathfrak{b}=\mathfrak{a}_F$, 
there exist $M>0$ and 
compact neighborhoods  
$C_{T_1}$ of $T_1$
and $C_{T_2}$ of $T_2$ in $X$
such that 
\begin{eqnarray}\label{eqn:419}
\operatorname{vol}(e^Y C_{T_1} \cap C_{T_2}) \le 
M\, e^{-\rho_{\mathfrak{q}}^{\min}(Y)}
\quad\text{for any $Y \in \mathfrak{a}_F$}.
\end{eqnarray}

Using again \eqref{eqn:sxvf}, one can then find an element $Y_0\in F$ such that 
\begin{eqnarray*}\label{eqn:eysc}
e^{Y_0}S_1\subset \text{interior of }C_{T_1}
\;\;\;{\rm and}\;\;\;
e^{-Y_0}S_2\subset \text{interior of }C_{T_2}.
\end{eqnarray*}
We choose then the neighborhoods
\begin{eqnarray*}\label{eqn:csct}
C_{S_1}:= e^{-Y_0}C_{T_1}
\;\;\;{\rm and}\;\;\;
C_{S_2}:= e^{Y_0}C_{T_2}.
\end{eqnarray*}
of $S_1$ and $S_2$ respectively. 
According to \eqref{eqn:419},  one has, for any $Y\in \mathfrak{a}_F$,
\begin{eqnarray*}\label{eqn:volC4bis}
\operatorname{vol}(e^Y C_{S_1} \cap C_{S_2}) 
= \operatorname{vol}(e^{Y-2Y_0} C_{T_1} \cap C_{T_2})
\le 
M\, e^{-\rho_{\mathfrak{q}}^{\min}(Y-2Y_0)}.
\end{eqnarray*}
Since the function $Y\mapsto |\rho_{\mathfrak{q}}^{\min}(Y\! -\! 2Y_0)\! -\! \rho_{\mathfrak{q}}^{\min}(Y)|$
is uniformly bounded on $\mathfrak{a}$, this gives the volume upper bound
\eqref{eqn:volC5}.
\end{proof}

Here is the sixth step towards the  volume upper bound \eqref{eqn:vol2}.

\begin{lemma}\label{lem:volC6}
Let $F$ be a facet and 
$C_1$, $C_2$ compact subsets of $G/H$.
Suppose   $C_1\cap(V_F^0\oplus V_F^-)=\emptyset$
or $C_2\cap(V_F^0\oplus V_F^+)=\emptyset$.
Then there exists $Y_0\in F$ such that 
\begin{equation*}\label{eqn:volC6}
e^Y C_1 \cap C_2 = \emptyset
\ \ 
\text{for any\/ $Y \in Y_0+ F$}.
\end{equation*}
\end{lemma}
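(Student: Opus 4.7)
My plan is to work in the weight decomposition $V = V_F^+ \oplus V_F^0 \oplus V_F^-$ of \eqref{eqn:VF} and exploit the observation that for $Y \in F$ the operator $e^Y$ acts on $V_F^+$ with strictly expanding eigenvalues $e^{\lambda(Y)} > 1$ (since $\lambda(Y) > 0$ for all $\lambda \in \Delta_F^+$), strictly contracts $V_F^-$, and fixes $V_F^0$ pointwise. The two hypotheses on $C_1$ and $C_2$ are symmetric, so I will sketch only the case $C_1 \cap (V_F^0 \oplus V_F^-) = \emptyset$; the other follows by interchanging the roles of $C_1, C_2$ and working with the projection onto $V_F^-$ instead.

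Let $p_+ \colon V \to V_F^+$ be the linear projection along $V_F^0 \oplus V_F^-$. The hypothesis says $p_+(v) \neq 0$ for every $v \in C_1$, so continuity of $p_+$ and compactness of $C_1$, $C_2$ give
$$\delta := \min_{v \in C_1} \|p_+(v)\| > 0, \qquad M := \max_{w \in C_2} \|p_+(w)\| < \infty.$$
Since each weight space $V_\lambda$ is $e^Y$-stable with eigenvalue $e^{\lambda(Y)}$, and the norm \eqref{eqn:Ve} is the maximum over weight components, for any $v \in C_1$ and any $Y \in F$ one has
$$\|p_+(e^Y v)\| \;=\; \max_{\lambda \in \Delta_F^+} e^{\lambda(Y)} \|v_\lambda\| \;\geq\; e^{m_F(Y)} \|p_+(v)\| \;\geq\; \delta\, e^{m_F(Y)},$$
where $m_F(Y) := \min_{\lambda \in \Delta_F^+} \lambda(Y) > 0$. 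Hence $e^Y v \in C_2$ forces $\delta\, e^{m_F(Y)} \leq M$, i.e.\ $m_F(Y) \leq \log(M/\delta)$.

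It remains to choose $Y_0$. Since $m_F$ is positive and positively homogeneous on the cone $F$, scaling produces $Y_0 \in F$ with $m_F(Y_0) > \log(M/\delta)$. For any $Y' \in F$ and any $\lambda \in \Delta_F^+$, $\lambda(Y_0 + Y') = \lambda(Y_0) + \lambda(Y') > \lambda(Y_0)$, whence $m_F(Y_0 + Y') \geq m_F(Y_0) > \log(M/\delta)$. The estimate above then precludes $e^{Y_0 + Y'} C_1 \cap C_2 \neq \emptyset$, giving the conclusion for all $Y \in Y_0 + F$.

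The argument is essentially bookkeeping inside the weight-space decomposition; the one substantive input is the strict positivity $\delta > 0$, which combines compactness of $C_1$ with the hypothesis that $p_+$ is nowhere zero on $C_1$. I do not anticipate any real obstacle. The symmetric case is handled identically with $p_-$ in place of $p_+$, replacing $m_F$ by $-\max_{\lambda \in \Delta_F^-} \lambda(Y) > 0$, so that $e^Y$ is now a contraction on $V_F^-$ and the same inequality produces the required $Y_0$.
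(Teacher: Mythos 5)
Your proof is correct and follows the same overall strategy as the paper: both exploit the weight decomposition $V = V_F^+ \oplus V_F^0 \oplus V_F^-$ and the fact that $e^Y$ for $Y \in F$ expands the $V_F^+$-component while the $V_F^+$-component of the compact set $C_2$ is bounded. The one substantive difference is in the positivity constant that drives the estimate. The paper works with $m_F^+(C_1) := \max_{\lambda \in \Delta_F^+} \min_{v \in C_1} \|v_\lambda\|$, i.e.\ a $\max$-$\min$, whose positivity would require a \emph{single} weight $\lambda \in \Delta_F^+$ with $v_\lambda \neq 0$ for \emph{every} $v \in C_1$; this is not immediate from the hypothesis, which only guarantees that each $v$ has \emph{some} nonzero component in $V_F^+$. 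Your $\delta := \min_{v\in C_1} \|p_+(v)\| = \min_{v\in C_1}\max_{\lambda\in\Delta_F^+}\|v_\lambda\|$ is the $\min$-$\max$; it dominates the paper's constant and, more importantly, its positivity is a direct consequence of compactness of $C_1$ plus the hypothesis, so your version is the technically cleaner one. The remaining steps — the lower bound $\|p_+(e^Y v)\| \ge \delta\,e^{m_F(Y)}$ with $m_F(Y) = \min_{\lambda\in\Delta_F^+}\lambda(Y) > 0$ for $Y\in F$, the choice of $Y_0$ by positive homogeneity of $m_F$ on the cone $F$, the monotonicity of $m_F$ along $Y_0 + F$, and the symmetric treatment of $V_F^-$ — match the paper's reasoning in substance, so there is nothing to repair.
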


\begin{proof}
For a compact subset $C$ of $X$ and  $\lambda \in \Delta$,
 we set 
\begin{eqnarray*}\label{eqn:mlac}
  m_{\lambda}(C) :=\min_{v \in C} \|v_{\lambda}\|
\;\;\;{\rm and}\;\;\;
  M_{\lambda}(C) :=\max_{v \in C} \|v_{\lambda}\|,
\end{eqnarray*}
and for
$\varepsilon=\pm$, 
we set
\begin{eqnarray*}\label{eqn:mfec}
m_F^\varepsilon(C) := 
  \max_{\lambda\in\Delta_F^\varepsilon}m_\lambda(C)
\;\;\;{\rm and }\;\;\;
M_F^\varepsilon(C) := 
  \max_{\lambda\in\Delta_F^\varepsilon}M_\lambda(C).
\end{eqnarray*}

If $C_1\cap(V_F^0\oplus V_F^-)=\emptyset$, one has $m_F^+(C_1)>0$ 
and we choose $Y_0\in F$ such that, for all $\lambda\in \Delta_F^+$,
$$
e^{\lambda(Y_0)}> \frac{M_F^+(C_2)}{m_F^+(C_1)}.
$$
Let $Y \in Y_0+F$. 
By definition of $m_F^+(C_1)$, 
one can find $\lambda\in \Delta^+_F$ such that, 
for any $v$ in $C_1$, one has  $\|v_\lambda\|\geq m_F^+(C_1)$.
One has then 
$$
\| (e^Yv)_\lambda\|=e^{\lambda(Y)}\| v_\lambda\|
\geq e^{\lambda(Y_0)}m_F^+(C_1)>M_F^+(C_2).
$$
Hence $e^Yv$ does not belong to $C_2$. This proves 
that $e^YC_1\cap C_2=\emptyset$.

Likewise, if $C_2\cap(V_F^+\oplus V_F^0)=\emptyset$, one has $m_F^-(C_2)>0$,
and we choose $Y_0\in F$ such that, for all $\lambda\in \Delta_F^-$,

$\displaystyle \mbox{ } \hspace{11em}
e^{-\lambda(Y_0)}> \frac{M_F^-(C_1)}{m_F^-(C_2)}.
$
\end{proof}

\subsection{Upper bound for  $\operatorname{vol}(e^Y C \cap C)$}
\label{sec:ubv}

Here is the seventh step towards the  volume upper bound \eqref{eqn:vol2}.
For any facet $F$, any $Y_0\in F$, and any $R\geq 0$, we introduce
the $R$-neighborhood of the $Y_0$-translate of the facet $F$~:
\begin{eqnarray}\label{eqn:fyr}
F(Y_0,R):= Y_0+F+\mathfrak{a}(R),
\end{eqnarray}
where $\mathfrak{a}(R)$ is the ball 
$\{ Y\in \mathfrak{a}: \| Y\|\leq R\}$.

\begin{lemma}\label{lem:volC7}
Let $F$ be a facet, $R\geq 0$, and 
$C_1$, $C_2$ compact subsets of $G/H$.
Then there exist $Y_{F,R} \in F$ and $M>0$ such that 
\begin{eqnarray}\label{eqn:volC7}
\operatorname{vol}(e^Y C_1 \cap C_2) \le 
M\, e^{-\rho_{\mathfrak{q}}^{\min}(Y)}
\quad\text{for any $Y \in F(Y_{F,R},R)$}.
\end{eqnarray}
\end{lemma}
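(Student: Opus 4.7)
My strategy is to split each $C_i$ into a ``good'' part hugging the invariant subspace $V_F^0\oplus V_F^{\mp}$ (to which Lemma~\ref{lem:volC5} applies) and a ``bad'' part at positive distance from it (to which Lemma~\ref{lem:volC6} applies and forces the intersection to vanish). The $\mathfrak{a}(R)$-thickening will be absorbed by exploiting that $e^{\mathfrak{a}(R)}$ preserves each weight subspace, hence in particular the subspaces $V_F^0\oplus V_F^{\pm}$.

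\textbf{Setup and bad-part vanishing.} Let $S_i:=C_i\cap(V_F^0\oplus V_F^{\mp})$ (with $-$ for $i=1$ and $+$ for $i=2$); Lemma~\ref{lem:volC5} provides compact neighborhoods $C_{S_i}\supset S_i$ in $X$ and $M_1>0$ with $\operatorname{vol}(e^W C_{S_1}\cap C_{S_2})\le M_1\,e^{-\rho_{\mathfrak{q}}^{\min}(W)}$ for $W\in\mathfrak{a}_F$. A compactness argument on $C_i\setminus\operatorname{int}(C_{S_i})$ (which is compact and disjoint from $V_F^0\oplus V_F^{\mp}$) yields $\epsilon>0$ so that $C_i^g:=\{x\in C_i:d(x,V_F^0\oplus V_F^{\mp})\le 2\epsilon\}\subset C_{S_i}$ and $C_i^b:=\{x\in C_i:d(x,V_F^0\oplus V_F^{\mp})\ge\epsilon\}$ decompose $C_i=C_i^g\cup C_i^b$ with $C_i^b\cap(V_F^0\oplus V_F^{\mp})=\emptyset$. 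Since $e^{\mathfrak{a}(R)}$ stabilizes the subspace $V_F^0\oplus V_F^{\mp}$, the enlarged compact set $e^{\mathfrak{a}(R)}C_i^b$ also misses it; applying Lemma~\ref{lem:volC6} to $(e^{\mathfrak{a}(R)}C_1^b,C_2)$ and to $(C_1,e^{\mathfrak{a}(R)}C_2^b)$ produces $Y_0^{(1)},Y_0^{(2)}\in F$ with $e^Y C_1^b\cap C_2=\emptyset$ and $e^Y C_1\cap C_2^b=\emptyset$ for every $Y\in Y_0^{(j)}+F+\mathfrak{a}(R)$. Taking $Y_{F,R}:=Y_0^{(1)}+Y_0^{(2)}\in F$ and using $F+F\subset F$, for $Y\in Y_{F,R}+F+\mathfrak{a}(R)$ both bad-part vanishings hold, so $\operatorname{vol}(e^YC_1\cap C_2)=\operatorname{vol}(e^YC_1^g\cap C_2^g)\le\operatorname{vol}(e^YC_{S_1}\cap C_{S_2})$.

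\textbf{Good-part bound and main obstacle.} Write $Y=W+Z$ with $W\in Y_{F,R}+F\subset\mathfrak{a}_F$ and $Z\in\mathfrak{a}(R)$; then $e^YC_{S_1}\subset e^W(e^{\mathfrak{a}(R)}C_{S_1})$, reducing the task to bounding $\operatorname{vol}(e^W(e^{\mathfrak{a}(R)}C_{S_1})\cap C_{S_2})$ on $\mathfrak{a}_F$. Reapplying Lemma~\ref{lem:volC5} to the compact sets $(e^{\mathfrak{a}(R)}C_{S_1})\cap(V_F^0\oplus V_F^-)$ and $C_{S_2}\cap(V_F^0\oplus V_F^+)$ (which live in the correct subspaces by $e^{\mathfrak{a}(R)}$-invariance, and after an initial enlargement of $C_{S_1}$ so that $e^{\mathfrak{a}(R)}C_{S_1}$ sits inside the new neighborhood) produces a bound $M_2\,e^{-\rho_{\mathfrak{q}}^{\min}(W)}$. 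The main technical obstacle is the passage from $\rho_{\mathfrak{q}}^{\min}(W)$ to $\rho_{\mathfrak{q}}^{\min}(Y)$: the function $\rho_{\mathfrak{q}}^{\min}$ can in principle jump across walls of $\mathfrak{a}$ (since the set $W(Y;\mathfrak{a})$ changes), but for $Y_{F,R}$ taken sufficiently deep in $F$, the facet $F'$ of $Y$ satisfies $F\subset\overline{F'}$, which forces a controlled comparison between $W(Y;\mathfrak{a})$ and $W(W;\mathfrak{a})$. Combined with the Lipschitz continuity of each $\rho_{\mathfrak{q}}(w\cdot)$ on its domain, this bounds $|\rho_{\mathfrak{q}}^{\min}(Y)-\rho_{\mathfrak{q}}^{\min}(W)|\le LR$ for a structural constant $L$, yielding $\operatorname{vol}(e^YC_1\cap C_2)\le Me^{-\rho_{\mathfrak{q}}^{\min}(Y)}$ with $M:=M_2 e^{LR}$.
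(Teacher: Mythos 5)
Your overall decomposition matches the paper's: split each $C_i$ into a ``good'' part covered by neighborhoods coming from Lemma~\ref{lem:volC5}, and a ``bad'' part disjoint from $V_F^0\oplus V_F^{\mp}$ to which Lemma~\ref{lem:volC6} applies. The bad-part argument (pre-thickening by $e^{\mathfrak{a}(R)}$ before invoking Lemma~\ref{lem:volC6}, then adding the two $Y_0^{(j)}$'s using that $F$ is a convex cone) is correct. But the good-part step has a genuine circularity. You want to bound $\operatorname{vol}\bigl(e^{W}(e^{\mathfrak{a}(R)}C_{S_1})\cap C_{S_2}\bigr)$ for $W\in\mathfrak{a}_F$. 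Lemma~\ref{lem:volC5} does not give a bound for arbitrary compact sets: it \emph{produces} compact neighborhoods $C_{S_1},C_{S_2}$ of its own choosing and gives the estimate only for those. When you ``reapply'' it to $(e^{\mathfrak{a}(R)}C_{S_1})\cap(V_F^0\oplus V_F^-)$, the neighborhood it returns has no reason to contain $e^{\mathfrak{a}(R)}C_{S_1}$, and the parenthetical ``after an initial enlargement of $C_{S_1}$ so that $e^{\mathfrak{a}(R)}C_{S_1}$ sits inside the new neighborhood'' cannot be enforced: you do not control the size of the output of Lemma~\ref{lem:volC5}, so enlarging $C_{S_1}$ merely enlarges the set it must contain, and there is no fixed point. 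This is the gap.

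The paper sidesteps this by proving the case $R=0$ first, and then deducing $R>0$ by applying the $R=0$ statement to the already-thickened pair $(e^{\mathfrak{a}(R)}C_1,\,C_2)$. Then for $Y=W+Z$ with $W\in Y_F+F$ and $Z\in\mathfrak{a}(R)$ one has $e^YC_1\cap C_2\subset e^W(e^{\mathfrak{a}(R)}C_1)\cap C_2$, and the only remaining step is to trade $\rho_{\mathfrak{q}}^{\min}(W)$ for $\rho_{\mathfrak{q}}^{\min}(Y)$ at the cost of a multiplicative constant, via the boundedness of $Y\mapsto\max_{\|Z\|\le R}|\rho_{\mathfrak{q}}^{\min}(Y+Z)-\rho_{\mathfrak{q}}^{\min}(Y)|$. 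That organization avoids ever needing Lemma~\ref{lem:volC5} to bound a set larger than the neighborhood it produced. (Your final paragraph also invokes this boundedness of $\rho_{\mathfrak{q}}^{\min}$-variation, and you rightly flag that $W(Y;\mathfrak{a})$ is not locally constant; note however that the comparison you sketch is between $\rho_{\mathfrak{q}}^{\min}(W)$ and $\rho_{\mathfrak{q}}^{\min}(Y)$, where $W,Y$ need not lie in a common facet of the $\Delta(V,\mathfrak{a})$-decomposition, so ``$F\subset\overline{F'}$'' is not the relevant structure; the paper simply asserts the boundedness.) The cleanest repair of your write-up is therefore structural: prove $R=0$ with your current bad/good decomposition, then reduce $R>0$ to it exactly as above.
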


\begin{proof} We first assume that $R=0$.
We will deduce Lemma \ref{lem:volC7} from the two
previous steps, namely Lemmas  \ref{lem:volC5} and  \ref{lem:volC6}.
Let
\begin{eqnarray*}\label{eqn:scvf}
S_1:=C_1\cap (V_F^0\oplus V_F^-)
\;\;\;{\rm and}\;\;\;
S_2:=C_2\cap (V_F^0\oplus V_F^+).
\end{eqnarray*}
According to Lemma \ref{lem:volC5} we can write
\begin{eqnarray*}\label{eqn:ccsc}
C_1:=C_{S_1}\cup C'_1
\;\;\;{\rm and}\;\;\;
C_2:=C_{S_2}\cup C'_2
\end{eqnarray*}
where $C_{S_1}$ and $C_{S_2}$ are respectively compact neighborhoods of 
$S_1$ in $C_1$ and of 
$S_2$ in $C_2$ satisfying the volume upper bound \eqref{eqn:volC5}
for some constant $M>0$,
and where $C'_1$ and $C'_2$ are compact subsets of $X$ such that
\begin{eqnarray*}\label{eqn:cvfvf}
C'_1\cap (V_F^0\oplus V_F^-)=\emptyset
\;\;\;{\rm and}\;\;\;
C'_2\cap (V_F^0\oplus V_F^+)=\emptyset.
\end{eqnarray*}
Hence according to Lemma \ref{lem:volC6}, there exists an element 
$Y_F\in F$ such that, for any $Y\in Y_F+ F$, one has, 
\begin{equation*}\label{eqn:eycc}
e^Y C'_1 \cap C'_2 = e^Y C_{S_1} \cap C'_2 =e^Y C'_1 \cap C_{S_2} =\emptyset\;.
\end{equation*}
Hence, one has the desired volume upper bound,
for any $Y \in Y_F +F$,
\begin{eqnarray*}\label{eqn:volC70}
\operatorname{vol}(e^Y C_1 \cap C_2) = 
\operatorname{vol}(e^Y C_{S_1} \cap C_{S_2}) \le 
M\, e^{-\rho_{\mathfrak{q}}^{\min}(Y)}.
\end{eqnarray*}

When $R$ is not zero, we  apply 
the first case to the compact sets $e^{\mathfrak{a}(R)}C_1$ and $C_2$
and notice that the function 
$Y\mapsto \displaystyle\max_{Z\in\mathfrak{a}(R)} 
|\rho_{\mathfrak{q}}^{\min}(Y\! +\! Z)\! -\! \rho_{\mathfrak{q}}^{\min}(Y)|$
is uniformly bounded on $\mathfrak{a}$.
\end{proof}

\begin{proof}[Proof of Theorem \ref{thm:volX}]
Here is the eighth and last step towards the  volume upper bound \eqref{eqn:vol2}.
Fix two  compact sets
$C_1, C_2$ of $G/H$. According to Lemma \ref{lem:volC7},
{\it  given any facet $F\in\mathcal{F}$  and any $R>0$ there
exist $Y_{F,R}\in F$ and $M>0$ such that \eqref{eqn:vol2} holds for any
$Y\in F(Y_{F,R},R)$.} 
The following Lemma \ref{lem:afyr} tells us that \eqref{eqn:vol2} holds
for any $Y$ in $\mathfrak{a}$.
This ends the proof of the volume upper bound \eqref{eqn:vol2} 
and of  Theorem \ref{thm:volX},
\end{proof}

\begin{lemma}
\label{lem:afyr}
Assume that, for any facet $F$
and any 
$R\ge 0$, we are given an element $Y_{F,R}\in F$.
Then, one can choose for every facet $F$ a constant $R_F\ge 0$ 
such that, using notations \eqref{eqn:fyr},  one has 
\begin{eqnarray}\label{eqn:afl}
\mathfrak{a} =
\bigcup_{F\in\mathcal{F}}
F(Y_{F,R_F},R_F).
\end{eqnarray}
\end{lemma}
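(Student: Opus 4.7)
The plan is to choose the constants $R_F$ by descending induction on $\dim F$, starting from the top-dimensional facets and finishing with the origin facet $\{0\}$. The invariant to maintain after handling all facets of dimension $>d$ is that the still-uncovered part of $\mathfrak{a}$ is contained in a finite union of tubular neighborhoods of facets of dimension $\le d$, each of some finite radius fixed by that stage of the construction.

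For each top-dimensional facet $F$ I would begin by setting $R_F := 0$. Since $F$ is an open convex cone and $Y_{F,0}\in F$, one has $Y_{F,0}+F\subset F$, so $F(Y_{F,0},0)=Y_{F,0}+F$ covers $F$ except for the boundary slabs
\[
\{Z\in F:\lambda(Z)\le\lambda(Y_{F,0})\text{ for some }\lambda\in\Delta_F^+\}\;\cup\;\{Z\in F:\lambda(Z)\ge\lambda(Y_{F,0})\text{ for some }\lambda\in\Delta_F^-\},
\]
each contained in a tubular neighborhood of a face of $\partial F$ whose radius is controlled by $\|Y_{F,0}\|$ and the weight norms. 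This establishes the invariant for the next step. In the inductive step, for each facet $F'$ of dimension $d$ with inherited tube radius $K_{F'}$, I would choose $R_{F'}$ large enough that $F(Y_{F',R_{F'}},R_{F'})=Y_{F',R_{F'}}+F'+\mathfrak{a}(R_{F'})$ contains every point $Y=V+W$ in the $K_{F'}$-tube around $F'$ (with $V\in\bar{F'}$ closest and $\|W\|\le K_{F'}$) for which $V-Y_{F',R_{F'}}\in\bar{F'}$. The remaining points are those with $V$ lying ``behind'' $Y_{F',R_{F'}}$ in the cone $F'$; they lie in a tubular neighborhood of a proper sub-face of $\bar{F'}$, of new finite radius controlled by $K_{F'}$ and $\|Y_{F',R_{F'}}\|$, which is precisely the invariant needed at the next dimension. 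Iterating down to $\{0\}$, where $F(0,R_{\{0\}})=\mathfrak{a}(R_{\{0\}})$ is simply a ball, and taking $R_{\{0\}}$ larger than the final residual radius, completes the covering of $\mathfrak{a}$.

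The main obstacle is the $R$-dependence of $Y_{F',R_{F'}}$: since $\|Y_{F',R_{F'}}\|$ might grow much faster than $R_{F'}$, one cannot in general arrange for the fattening $\mathfrak{a}(R_{F'})$ to reach back across $Y_{F',R_{F'}}$ all the way to the origin. The plan avoids this difficulty by never asking for such a reach: the fattening $\mathfrak{a}(R_{F'})$ only has to absorb the bounded tube radius $K_{F'}$ fixed by the previous step, while the problematic near-origin and near-sub-face leftovers are systematically deferred to lower-dimensional induction steps and eventually swallowed by the ball produced by the $\{0\}$-facet.
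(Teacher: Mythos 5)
Your proposal is correct and follows essentially the same strategy as the paper: induct on the dimension (equivalently, codimension) of the facets, maintaining the invariant that the uncovered region lies in a bounded tubular neighborhood of the union of lower-dimensional facets, and observe that the choice $R_F=0$ for open facets and then $R_F$ slightly larger than the inherited tube radius makes the induction close. Your remark on why the $R$-dependence of $Y_{F,R}$ is harmless (the leftover points ``behind'' $Y_{F,R}$ are simply deferred to lower-dimensional facets) is a useful elaboration of the point the paper leaves implicit.
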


\begin{figure}
\centering
\begin{tikzpicture}[scale=0.3]
\foreach \i in {0,120,240}
{
  \begin{scope}[rotate=\i]
	\foreach \j in {-60,0,30,60}{
	  \draw[blue, rotate=\j] (0,0) -- (8,0);
  \draw[rotate=\j] (8,1)--(5,1) arc (90:270:1cm)--(8,-1);
  }
  \draw[shift={(-30:1cm)}] (0:7cm) --(0:0cm) -- (-60:7cm);
  \foreach \j in {0,30}{
	  \draw[rotate=\j,shift={(15:2cm)}] (0:6cm) --(0:0cm) -- (30:6cm);
  }
  \end{scope}
}
\draw (0,0) circle (6cm);
\end{tikzpicture}

\caption{Cover of $\mathfrak{a}$ }
\label{figcover}
\end{figure}
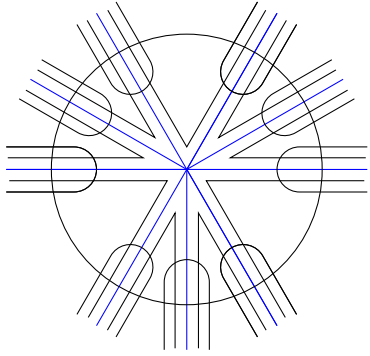

\begin{proof} We will choose inductively on 
$\ell=0, 1, \ldots , \dim\mathfrak{a}$,
simultaneously the constants $R_F$ 
for all the facets of codimension $\ell$ (see Figure \ref{figcover}). 

We first choose $R_F=0$ for all the open facets $F$.

We assume that $R_F$ has been chosen for the facets of codimension 
strictly less than $\ell$
and we consider the set 
$$\mathfrak{a}_\ell =
\bigcup_{\substack{F\in\mathcal{F}\\ \operatorname{codim} F<\ell}}
F(Y_{F,R_F},R_F).$$
We assume, by induction, that there exists a constant $\delta_\ell>0$ such that
the complementary set 
$
\mathfrak{a}\setminus \mathfrak{a}_\ell$ is  included in a $\delta_\ell$-neighborhood
of the union  of the facets of codimension $\ell$.
We choose $R_F= \delta_\ell$ for all the facets of codimension $\ell$.
This gives a new set $\mathfrak{a}_{\ell +1}$.
The complementary set 
$\mathfrak{a}\setminus \mathfrak{a}_{\ell +1}$ is  
then included in a $\delta_{\ell+1}$-neighborhood
of the union  of the facets of codimension $\ell+1$, for some constant $\delta_{\ell+1}>0$.
And we go on by induction.
\end{proof}

\section{Application}

The criterion given in Theorem \ref{thm:Xtemp} is easy to apply:
it is easy to detect for a given homogeneous space $G/H$ whether 
the unitary representation of $G$ in $L^2(G/H)$ is tempered or not. 
We collect in this chapter a few corollaries of this criterion,
omitting the details of the computational verifications that 
will be published elsewhere together with a complete classification
of homogeneous spaces $G/H$ for which $L^2(G/H)$ is non-tempered.

\subsection{Abelian or amenable generic stabilizer}
\label{seccor}

For general real reductive homogeneous spaces,
we deduce the following facts~:

\begin{pro} 
\label{progh1h2}
Let $p\geq 2$ be an even integer.
Let $G$ be a semisimple algebraic Lie group,
and
$H_1\supset H_2$ two unimodular subgroups.\\
$a)$ If $L^2(G/H_1)$ is almost $L^p$ then $L^2(G/H_2)$ is almost $L^p$.\\
$b)$ The converse is true when $H_2$ is normal in $H_1$ and $H_1/H_2$ is amenable
(for instance finite, or compact, or abelian).
\end{pro}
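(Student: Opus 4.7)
The plan is to treat the two parts separately: a direct volume estimate handles (a), while (b) proceeds via weak containment together with Fell's continuity of induction and Proposition \ref{pro:temwl2}.

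For part (a), I would exploit the canonical $G$-equivariant projection $p\colon G/H_2 \to G/H_1$. Given a compact set $C \subset G/H_2$, its image $\bar{C} := p(C)$ is compact in $G/H_1$, and $G$-equivariance of $p$ yields the pointwise inclusion $gC \cap C \subset p^{-1}(g\bar{C} \cap \bar{C})$. Since $H_1$ and $H_2$ are both unimodular, $H_1/H_2$ carries an $H_1$-invariant measure and the invariant measure on $G/H_2$ decomposes as a fiber integral over $G/H_1$. Bounding the volume of each fiber $C \cap p^{-1}(x)$ by its supremum $V_C$ over $\bar{C}$ (finite by compactness of $C$) gives
\[
\operatorname{vol}_{G/H_2}(gC \cap C) \leq V_C \cdot \operatorname{vol}_{G/H_1}(g\bar{C} \cap \bar{C})
\qquad\text{for every } g \in G.
\]
Applying Lemma \ref{lem:wlpVX} to $G/H_1$ (using the hypothesis) and then, via this estimate, to $G/H_2$ delivers the conclusion.

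For part (b), I would first use induction in stages (valid because $H_2\subset H_1$ are closed unimodular subgroups) to write $L^2(G/H_2) = \operatorname{Ind}_{H_1}^G L^2(H_1/H_2)$. Since $H_2$ is normal in $H_1$ with amenable quotient, the Hulanicki--Reiter theorem recalled in Section \ref{subsec:temr} gives that the trivial representation of $H_1/H_2$ is weakly contained in its regular representation; pulling back to $H_1$, the trivial representation $\mathbf{1}_{H_1}$ is weakly contained in $L^2(H_1/H_2)$ as a representation of $H_1$. By Fell's continuity of induction, $L^2(G/H_1) = \operatorname{Ind}_{H_1}^G \mathbf{1}_{H_1}$ is then weakly contained in $L^2(G/H_2)$. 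Writing $p = 2m$, the $m$-fold tensor power preserves weak containment, so $L^2(G/H_1)^{\otimes m}$ is weakly contained in $L^2(G/H_2)^{\otimes m}$; by Proposition \ref{pro:temwl2} the latter is tempered, hence so is the former (temperedness is closed under weak containment), and a second application of Proposition \ref{pro:temwl2} yields that $L^2(G/H_1)$ is almost $L^{2m}$.

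The main obstacle is mostly bookkeeping. In (a) one must choose compatible normalizations of the invariant measures on $G/H_1$, $G/H_2$, and $H_1/H_2$ so that the Fubini decomposition really holds; in (b) one needs the identification $L^2(G/H_2) \cong \operatorname{Ind}_{H_1}^G L^2(H_1/H_2)$ as unitary $G$-representations, not merely up to weak equivalence. Both are classical; once they are in place, the remaining inputs (Hulanicki--Reiter, Fell's continuity of induction, tensor-stability of weak containment, and Proposition \ref{pro:temwl2}) are immediate and the proof is essentially mechanical.
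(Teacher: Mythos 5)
The paper states this proposition without proof (the Applications chapter defers all details to a later publication), so there is no in-text argument to compare against; I therefore assess your argument on its own. It is correct, and moreover it is essentially forced, since $H_1$ and $H_2$ are only assumed unimodular, not reductive, so the criterion of Theorem \ref{thm:Xtemp} is unavailable and a direct argument such as yours is required. In part (a) the chain is sound: the $G$-equivariant projection $p\colon G/H_2\to G/H_1$ gives $gC\cap C\subset C\cap p^{-1}(g\bar C\cap\bar C)$, unimodularity of $H_1$ and $H_2$ provides the $H_1$-invariant measure on the fiber and hence the Weil disintegration of the invariant measure on $G/H_2$ over $G/H_1$, and the uniform fiber-volume bound $V_C<\infty$ follows from compactness of $C$ and local triviality of the bundle; the estimate then plugs directly into Lemma \ref{lem:wlpVX} (applied in the ``if'' direction to $G/H_1$ and the ``only if'' direction to $G/H_2$). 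In part (b) the route via induction in stages, Hulanicki--Reiter for the amenable quotient $H_1/H_2$, Fell's continuity of induction to obtain $L^2(G/H_1)\prec L^2(G/H_2)$, stability of weak containment under $m$-fold tensor powers, transitivity of weak containment in $\lambda_G$, and two applications of Proposition \ref{pro:temwl2} is exactly the right reduction, and it correctly uses the normality of $H_2$ (so that $L^2(H_1/H_2)$ is the pullback of the regular representation of the group $H_1/H_2$). The only points to spell out in a polished write-up are the two bookkeeping items you already flag: the finiteness of $V_C$ (routine from compactness and a finite trivializing cover) and the unitary identification $L^2(G/H_2)\cong\operatorname{Ind}_{H_1}^G L^2(H_1/H_2)$ with compatible invariant measures.
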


\begin{pro} 
\label{porgh}
Let $p\geq 2$ be an even integer.
Let $G$ be an algebraic semisimple Lie group,
and
$H$ an algebraic reductive subgroup.\\
$a)$ If the representation of $G_{\mathbb{C}}$ in
$L^2(G_{\mathbb{C}}/H_{\mathbb{C}})$ is almost $L^p$,
then the representation of $G$ in $L^2(G/H)$ is almost $L^p$.\\
$b)$ The converse is true when $H$ is a split group.
\end{pro}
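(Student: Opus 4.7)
The plan is to reduce each direction to the numerical criterion of Theorem \ref{thm:Xtemp} applied to both pairs $(G,H)$ and $(G_\mathbb{C},H_\mathbb{C})$, and then to compare the two inequalities on suitable Cartan subspaces. The key is a clean identification of the maximal split abelian subspace of $\mathfrak{h}_\mathbb{C}$ (viewed as a real reductive Lie algebra) that contains the maximal split abelian subspace $\mathfrak{a}$ of $\mathfrak{h}$.

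First I would fix a Cartan decomposition $\mathfrak{h}=\mathfrak{k}_H\oplus\mathfrak{p}_H$, a maximal split abelian subspace $\mathfrak{a}\subset\mathfrak{p}_H$, and a maximal torus $\mathfrak{t}_M$ of the centralizer of $\mathfrak{a}$ in $\mathfrak{k}_H$, so that $\mathfrak{t}_M\oplus\mathfrak{a}$ is a Cartan subalgebra of $\mathfrak{h}$. The compact real form $\mathfrak{u}:=\mathfrak{k}_H\oplus i\mathfrak{p}_H$ of $\mathfrak{h}_\mathbb{C}$ then realizes $\mathfrak{h}_\mathbb{C}=\mathfrak{u}\oplus i\mathfrak{u}$ as a real Cartan decomposition, and I would check that
\[
\mathfrak{a}_{H_\mathbb{C}}\ :=\ i\mathfrak{t}_M\oplus\mathfrak{a}\ \subset\ i\mathfrak{u}
\]
is a maximal split abelian subspace of the real reductive Lie algebra $\mathfrak{h}_\mathbb{C}$ and contains $\mathfrak{a}$, with equality precisely when $\mathfrak{t}_M=0$, i.e.\ when $H$ is split.

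Next I would establish the key dimensional identity: for $Y\in\mathfrak{a}$ the $\operatorname{ad}(Y)$-action on the real vector space $\mathfrak{g}_\mathbb{C}=\mathfrak{g}\otimes_\mathbb{R}\mathbb{C}$ is $\mathbb{C}$-linear, so its real eigenspace for each real eigenvalue $\lambda$ is $\mathfrak{g}_\lambda\otimes_\mathbb{R}\mathbb{C}$, of doubled real dimension. Summing over positive eigenvalues (and similarly for the $H$-invariant complement $\mathfrak{q}$, whose complexification $\mathfrak{q}_\mathbb{C}$ is an $H_\mathbb{C}$-invariant complement of $\mathfrak{h}_\mathbb{C}$ in $\mathfrak{g}_\mathbb{C}$) yields
\[
\rho_{\mathfrak{g}_\mathbb{C}}(Y)=2\rho_\mathfrak{g}(Y),\qquad \rho_{\mathfrak{q}_\mathbb{C}}(Y)=2\rho_\mathfrak{q}(Y)\qquad\text{for all }Y\in\mathfrak{a},
\]
where the left-hand sides are computed with respect to the $H_\mathbb{C}$-representations on $\mathfrak{g}_\mathbb{C}$ and $\mathfrak{q}_\mathbb{C}$, and the right-hand sides with respect to the $H$-representations on $\mathfrak{g}$ and $\mathfrak{q}$.

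With this in hand, Theorem \ref{thm:Xtemp} applied to the complex pair says that $L^2(G_\mathbb{C}/H_\mathbb{C})$ is almost $L^p$ iff $\rho_{\mathfrak{g}_\mathbb{C}}\le p\,\rho_{\mathfrak{q}_\mathbb{C}}$ on $\mathfrak{a}_{H_\mathbb{C}}$. Restricting this inequality to the subspace $\mathfrak{a}\subset\mathfrak{a}_{H_\mathbb{C}}$ and cancelling the factor $2$ produces $\rho_\mathfrak{g}\le p\,\rho_\mathfrak{q}$ on $\mathfrak{a}$, which by Theorem \ref{thm:Xtemp} is exactly the almost-$L^p$ condition for $L^2(G/H)$; this gives (a). For (b), the split hypothesis forces $\mathfrak{a}_{H_\mathbb{C}}=\mathfrak{a}$, so the two inequalities are tested on the same space and the equivalence follows from the same identity. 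The main obstacle is purely the bookkeeping of Cartan subspaces: making precise the inclusion $\mathfrak{a}\subset\mathfrak{a}_{H_\mathbb{C}}$ and explaining why the split hypothesis is exactly what upgrades the one-way restriction in (a) to the equivalence in (b); everything else is a direct comparison.
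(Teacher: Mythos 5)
Your argument is correct, and it is the intended route: Proposition~\ref{porgh} is stated by the authors as an application of the criterion of Theorem~\ref{thm:Xtemp}, with its verification deliberately omitted (see the preamble to Chapter~5). The two ingredients you isolate are exactly what is needed. First, with a Cartan decomposition $\mathfrak{h}=\mathfrak{k}_H\oplus\mathfrak{p}_H$, $\mathfrak{a}\subset\mathfrak{p}_H$ maximal split abelian and $\mathfrak{t}_M$ a maximal torus of $\mathfrak z_{\mathfrak k_H}(\mathfrak a)$, the space $\mathfrak{t}_M\oplus i\mathfrak{a}$ is a maximal torus of the compact real form $\mathfrak{u}=\mathfrak{k}_H\oplus i\mathfrak{p}_H$ of $\mathfrak{h}_{\mathbb C}$, hence $\mathfrak{a}_{H_{\mathbb C}}=i\mathfrak{t}_M\oplus\mathfrak{a}\subset i\mathfrak{u}$ is indeed a maximal split abelian subspace of the real reductive Lie algebra $\mathfrak{h}_{\mathbb C}$, containing $\mathfrak{a}$, with equality iff $\mathfrak{t}_M=0$, i.e.\ iff $H$ is split. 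Second, for $Y\in\mathfrak{a}$ the operator $\operatorname{ad}(Y)$ on $\mathfrak{g}_{\mathbb C}$ (resp.\ $\mathfrak{q}_{\mathbb C}$) is $\mathbb C$-linear with the same real eigenvalues as on $\mathfrak{g}$ (resp.\ $\mathfrak{q}$) and real multiplicities doubled, whence $\rho_{\mathfrak{g}_{\mathbb C}}=2\rho_{\mathfrak g}$ and $\rho_{\mathfrak{q}_{\mathbb C}}=2\rho_{\mathfrak q}$ on $\mathfrak{a}$. Then (a) is the restriction of the complex criterion $\rho_{\mathfrak{g}_{\mathbb C}}\le p\,\rho_{\mathfrak{q}_{\mathbb C}}$ from $\mathfrak{a}_{H_{\mathbb C}}$ to $\mathfrak{a}$, and (b) holds because the split hypothesis collapses $\mathfrak{a}_{H_{\mathbb C}}$ to $\mathfrak{a}$, making the two criteria test the same inequality on the same space. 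Nothing is missing.
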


\begin{theorem}\label{thXtemp2}
Let $G$ be an algebraic  semisimple real  Lie group,
and $H$ an algebraic reductive subgroup.\\
$a)$ If the representation of $G$
in $L^2(G/H)$ is tempered, then
the set of points in $G/H$ with amenable stabilizer in $H$ is dense.\\
$b)$ If the set of points 
in $G/H$ with abelian stabilizer in $\mathfrak h$ is dense, then
the representation of $G$
in $L^2(G/H)$ is tempered.
\end{theorem}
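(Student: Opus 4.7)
The plan is to derive both parts from the criterion of Theorem~\ref{thm:Xtemp}(a): $L^2(G/H)$ is tempered if and only if $\rho_{\mathfrak{h}}(Y)\le\rho_{\mathfrak{q}}(Y)$ for every $Y\in\mathfrak{a}$. The bridge to stabilizers is the $H$-equivariant local diffeomorphism $\mathfrak{q}\to G/H$, $X\mapsto(\exp X)H$, which identifies the isotropy $H$-action on $\mathfrak{q}$ with the $H$-action on $G/H$ near $eH$; consequently the generic stabilizer of $H$ in $G/H$ coincides with the generic stabilizer of the isotropy representation of $H$ on $\mathfrak{q}$, which by Richardson's theorem is well defined up to $H$-conjugation.

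The algebraic core is the following weight identity. For $v\in\mathfrak{q}$ with $\mathfrak{h}$-stabilizer $\mathfrak{l}:=\mathfrak{h}_v$ and $Y\in\mathfrak{l}\cap\mathfrak{a}$, the orbit tangent $[\mathfrak{h},v]\subset\mathfrak{q}$ is $Y$-stable (since $[Y,v]=0$), and $\operatorname{ad}(v)$ descends to a $Y$-module isomorphism $\mathfrak{h}/\mathfrak{l}\simeq[\mathfrak{h},v]$. Choosing any $Y$-invariant complement $W$ to $[\mathfrak{h},v]$ in $\mathfrak{q}$ yields
\[
\rho_{\mathfrak{q}}(Y)-\rho_{\mathfrak{h}}(Y)\;=\;\rho_W(Y)-\rho_{\mathfrak{l}}(Y),\qquad Y\in\mathfrak{l}\cap\mathfrak{a}.
\]
When $v$ is chosen in the principal stratum (the open dense subset of $\mathfrak{q}$ on which stabilizers have minimal dimension and are mutually $H$-conjugate), Luna's slice theorem forces $\mathfrak{l}$ to act trivially on $W$ --- if it did not, nearby points of the slice would have strictly smaller stabilizers, contradicting principality --- so $\rho_W(Y)=0$ and the identity collapses to $\rho_{\mathfrak{q}}(Y)-\rho_{\mathfrak{h}}(Y)=-\rho_{\mathfrak{l}}(Y)$ on $\mathfrak{l}\cap\mathfrak{a}$.

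For part (a), I argue the contrapositive. If the generic $\mathfrak{l}$ is not amenable, its semisimple part contains a non-compact simple ideal $\mathfrak{s}$ with a non-trivial split Cartan $\mathfrak{c}$; after an $H$-conjugation, place $\mathfrak{c}\subset\mathfrak{a}$. For $Y\in\mathfrak{c}$ regular in $\mathfrak{s}$, the other ideals of $\mathfrak{l}$ commute with $Y$, so $\rho_{\mathfrak{l}}(Y)=\rho_{\mathfrak{s}}(Y)>0$, and the collapsed identity gives $\rho_{\mathfrak{q}}(Y)-\rho_{\mathfrak{h}}(Y)=-\rho_{\mathfrak{s}}(Y)<0$, violating the criterion of Theorem~\ref{thm:Xtemp} and ruling out temperedness.

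For part (b), if $\mathfrak{l}$ is abelian then $\operatorname{ad}(Y)|_{\mathfrak{l}}=0$ for every $Y\in\mathfrak{l}$, so $\rho_{\mathfrak{l}}(Y)=0$ and the collapsed identity yields the equality $\rho_{\mathfrak{q}}(Y)=\rho_{\mathfrak{h}}(Y)$ on $\mathfrak{l}\cap\mathfrak{a}$. The main obstacle --- what I expect to be the hardest step --- is to promote this equality on $\mathfrak{l}\cap\mathfrak{a}$ and its $W_H$-translates to the inequality $\rho_{\mathfrak{h}}\le\rho_{\mathfrak{q}}$ on all of $\mathfrak{a}$, because in general the $H$-conjugate copies of $\mathfrak{l}\cap\mathfrak{a}$ sitting inside $\mathfrak{a}$ form only a finite union of proper subspaces. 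My strategy would be to vary $v$ through the principal stratum: for each $Y\in\mathfrak{a}$, seek a principal-stratum $v$ with $[Y,v]=0$ by intersecting the open dense principal stratum with the linear subspace $\mathfrak{q}^Y:=\ker(\operatorname{ad}(Y)|_{\mathfrak{q}})$ and invoking Zariski density, then exploit the continuity and piecewise-linearity of $\rho_{\mathfrak{h}}-\rho_{\mathfrak{q}}$ (Section~\ref{subsec:rhoV}) to extend the inequality from a dense subset of $\mathfrak{a}$ to all of it, completing the proof via Theorem~\ref{thm:Xtemp}.
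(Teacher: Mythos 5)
The paper itself does not prove Theorem~\ref{thXtemp2}: Section~5 states explicitly that the verifications are omitted and deferred to a subsequent publication, so there is no in-paper argument to compare against. Your framework --- reduce to the $\rho$-inequality of Theorem~\ref{thm:Xtemp}(a), linearize via the $H$-equivariant map $\pi_-\colon\mathfrak q\to G/H$, and use the slice identity $\rho_{\mathfrak q}(Y)-\rho_{\mathfrak h}(Y)=\rho_W(Y)-\rho_{\mathfrak l}(Y)$ for $Y\in\mathfrak l\cap\mathfrak a$ --- is a sensible start, and the identity is correctly derived (the $Y$-equivariance of $\operatorname{ad}(v)\colon\mathfrak h/\mathfrak l\to[\mathfrak h,v]$ uses exactly $[Y,v]=0$).

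But the step you flag as the hardest in part~(b) has a real gap, and the proposed fix does not close it. Intersecting the Zariski open dense set of abelian-stabilizer vectors with the subspace $\mathfrak q^Y$ can give the empty set: a proper linear subspace of $\mathfrak q$ may lie entirely inside the complementary closed set. A concrete instance: $G=SL(3,\mathbb R)$, $H=SL(2,\mathbb R)$ in the upper-left block, so $\mathfrak q\simeq\mathbb R\oplus\mathbb R^2\oplus(\mathbb R^2)^*$ (trivial $\oplus$ standard $\oplus$ dual). The principal $\mathfrak h$-stabilizer on $\mathfrak q$ is trivial, hence abelian, and one computes $\rho_{\mathfrak h}=\rho_{\mathfrak q}$ on $\mathfrak a$, so the hypothesis and conclusion of~(b) both hold; yet for regular $Y\in\mathfrak a$, $\mathfrak q^Y$ is the one-dimensional trivial summand, on which every vector $v$ has $\mathfrak h_v=\mathfrak h=\mathfrak{sl}(2,\mathbb R)$. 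No $v\in\mathfrak q^Y$ has abelian stabilizer, your identity collapses to the tautology $\rho_{\mathfrak q}(Y)-\rho_{\mathfrak h}(Y)=\rho_{\mathfrak q}(Y)-\rho_{\mathfrak h}(Y)$, and the required inequality $\rho_{\mathfrak h}(Y)\le\rho_{\mathfrak q}(Y)$ is not reached. A genuinely new mechanism is needed for such $Y$. A secondary caution touching both parts: the appeal to Luna's slice theorem to conclude that $\mathfrak l$ acts trivially on $W$ at a principal point presupposes a closed orbit and a reductive isotropy group, and neither is automatic for a noncompact reductive $H$ acting linearly (already $SL(2,\mathbb R)$ on $\mathbb R^2$ has a non-reductive principal isotropy and non-closed generic orbit), so this triviality needs an independent justification or the argument should be reorganized to avoid it.
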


The proof of Theorem \ref{thXtemp2} leads us to
the list of all the spaces $G/H$ for which the representation of $G$ in $L^2(G/H)$ is non-tempered.

%

\subsection{Complex homogeneous spaces}
\label{seccomhom}

We assume in this section that $G$ and $H$ are complex Lie groups.
Since complex amenable reductive Lie groups are abelian,
 the following result is a particular case of
Theorem \ref{thXtemp2}.

\begin{theorem}
\label{exagxg2}
Suppose $G$ is a complex algebraic semisimple group
and $H$ a complex reductive subgroup.
Then 
$L^2(G/H)$ is tempered if and only if the set of points in $G/H$ 
with abelian stabilizer in $\mathfrak{h}$ is dense.
\end{theorem}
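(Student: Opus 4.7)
The plan is to deduce Theorem \ref{exagxg2} from Theorem \ref{thXtemp2} by exploiting the rigidity imposed by the complex structure. The direction \emph{abelian generic $\mathfrak{h}$-stabilizer $\Longrightarrow$ tempered} is nothing but Theorem \ref{thXtemp2}(b) applied verbatim, since the hypothesis matches, so no further work is needed for this implication.

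For the converse, I would apply Theorem \ref{thXtemp2}(a) to conclude that the set of points $x = gH \in G/H$ whose stabilizer $H_x = H \cap gHg^{-1}$ is amenable is dense in $G/H$. The task is then to promote ``amenable stabilizer'' to ``abelian stabilizer Lie algebra'' in the complex setting. The crucial step is to know that the generic $H$-stabilizer on $G/H$ is a complex \emph{reductive} subgroup. Since $H$ is complex reductive, Matsushima's theorem shows that $G/H$ is an affine $H$-variety, and the classical principal isotropy theorem (Richardson, Luna, Popov--Vinberg) then guarantees the existence of a dense open subset of $G/H$ on which all isotropy Lie algebras are conjugate to a fixed reductive complex subalgebra of $\mathfrak{h}$. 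Equivalently, the intersection $\mathfrak{h} \cap \operatorname{Ad}(g)\mathfrak{h}$ of two reductive complex Lie subalgebras of $\mathfrak{g}$ is reductive for $g$ in a dense open subset of $G$.

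Granting this, once $\mathfrak{h}_x$ is both complex reductive and integrates to an amenable subgroup, the decomposition $\mathfrak{h}_x = [\mathfrak{h}_x,\mathfrak{h}_x] \oplus Z(\mathfrak{h}_x)$, together with the fact that any nonzero complex semisimple Lie algebra, viewed as a real Lie algebra, integrates to a noncompact complex semisimple Lie group and is therefore non-amenable, forces $[\mathfrak{h}_x,\mathfrak{h}_x] = 0$; that is, $\mathfrak{h}_x$ is abelian. Intersecting the dense set of $x$ with amenable $H_x$ with the dense open set of $x$ with reductive $\mathfrak{h}_x$ yields a dense set of $x$ with abelian $\mathfrak{h}_x$, completing the proof.

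The main obstacle is precisely the generic reductivity of the stabilizer: without it, one could \emph{a priori} have solvable but nonabelian complex stabilizer Lie algebras, which would still be amenable as real Lie groups and so would not be ruled out by Theorem \ref{thXtemp2}(a) alone. This is why the hint in the excerpt singles out the statement ``complex amenable \emph{reductive} Lie groups are abelian'' as the fact being invoked, rather than an unqualified statement about complex amenable Lie groups.
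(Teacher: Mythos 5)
Your proof is correct and mirrors the paper's one-line deduction of this result from Theorem~\ref{thXtemp2}, invoking precisely the fact that complex amenable reductive Lie groups are abelian. You have usefully made explicit the intermediate step that the paper leaves tacit — the generic reductivity of $\mathfrak{h}\cap\operatorname{Ad}(g)\mathfrak{h}$, which follows from the principal isotropy theorem applied to the (orthogonal, hence reductively-stabilized) $H$-representation on $\mathfrak{q}$ near the fixed point $eH$ — and your dense-meets-dense-open argument to combine it with Theorem~\ref{thXtemp2}(a) is exactly what is needed.
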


\begin{example}
\label{exagslsoc}
$L^2(SL(n,\mathbb{C})/SO(n,\mathbb{C}))$ is always tempered.\\
$L^2(SL(2m,\mathbb{C})/Sp(m,\mathbb{C}))$ is never tempered.\\
$L^2(SO(7,\mathbb{C})/G_2)$ is not tempered.
\end{example}

\begin{example}
\label{exaslnc}
Let $n=n_1+\cdots + n_r$ with $n_1\geq\cdots\geq n_r\geq 1$, $r\geq 2$.\\
$L^2(SL(n,\mathbb{C})/\prod SL(n_i,\mathbb{C}))$ is tempered iff
$2n_1\leq n+1$.\\
$L^2(SO(n,\mathbb{C})/\prod SO(n_i,\mathbb{C}))$ is tempered iff
$2n_1\leq n+2$.\\
$L^2(Sp(n,\mathbb{C})/\prod Sp(n_i,\mathbb{C}))$ is tempered iff
$r\geq 3$ and $2n_1\leq n$.
\end{example}

\subsection{Real homogeneous spaces}
\label{secsym}

Here are a few examples of application of our criterion.

\begin{example}
\label{exagxg}
Let $G_1$ be a real algebraic semisimple Lie group and 
$K_1$ a maximal compact subgroup.\\
$L^2(G_1\times G_1/\Delta(G_1))$ is always tempered.\\
$L^2(G_{1,\mathbb{C}}/G_1)$ is always tempered.\\
$L^2(G_{1,\mathbb{C}}/K_{1,\mathbb{C}})$ is tempered iff $G_1$ is quasisplit.
\end{example}

\begin{example}
\label{exafj} 
Let $G/H$ be a symmetric space i.e. $G$ is a real algebraic semisimple Lie group and $H$ is the set of fixed points
of an involution of $G$.
Write $\mathfrak{g}=\mathfrak{h} +  \mathfrak{q}$ for the $H$-invariant decomposition of $\mathfrak{g}$.
Let $G'$ be an algebraic semisimple Lie group with Lie algebra 
$\mathfrak{g}'=\mathfrak{h} + \sqrt{-1}\mathfrak{q}$. 

Then
$L^2(G/H)$ is almost $L^p$ iff $L^2(G'/H)$ is almost $L^p$.
\end{example}

\begin{example}
\label{exagslopq}
$L^2(SL(p+q,\mathbb{R})/SO(p,q))$ is always tempered.\\
$L^2(SL(2m,\mathbb{R})/Sp(m,\mathbb{R}))$ is never tempered.\\
$L^2(SL(m+n,\mathbb{R})/SL(m,\mathbb{R})\times SL(n,\mathbb{R}))$ is tempered iff $|m-n|\leq 1$.
\end{example} 

\begin{example}
\label{ex:soprod}
Let
$p_1 + \dots + p_r\le p$ and  $q_1 + \dots + q_r\le q$.\\
$L^2(SO(p,q)/\prod SO(p_i,q_i))$ is tempered iff
$\displaystyle 
2\max_{p_iq_i\neq 0} (p_i+q_i)\leq p+q+2$.
\end{example}

The homogeneous spaces in Examples~\ref{exaslnc} and \ref{ex:soprod}
are not symmetric spaces when $r \ge 3$.

\vspace{0.5cm}
\subsection*{Acknowledgments}

The authors are grateful to the Institut des Hautes \'Etudes Scientifiques (Bures-sur-Yvette) and to the University of Tokyo for its support through the GCOE program
for giving us opportunities to work together in very good conditions.
The second author was partially supported by Grant-in-Aid for Scientific Research 
(A) (25247006) JSPS.


{\small
\noindent
Y. \textsc{Benoist}\newline
CNRS-Universit\'e Paris-Sud, 91405 Orsay, France\newline
(e-mail: \texttt{yves.benoist@math.u-psud.fr})

\medskip
\noindent
T. \textsc{Kobayashi}\newline
Kavli IPMU (WPI) and Graduate School of Mathematical Sciences,
the University of Tokyo, Meguro, Komaba, 153-8914, Tokyo, Japan\newline
(e-mail: \texttt{toshi@ms.u-tokyo.ac.jp})
}

\end{document}